\numberwithin{equation}{section}
\theoremstyle{plain}
\newtheorem{thm}{Theorem}[section]
\newtheorem{prop}[thm]{Proposition}
\newtheorem{lem}[thm]{Lemma}
\newtheorem{cor}[thm]{Corollary}
\theoremstyle{definition}
\newtheorem{defn}[thm]{Definition}
\newtheorem{rem}[thm]{Remark}
\newcommand{\ichi}{\mathbf{1}}
\newcommand{\C}{\mathbb{C}}
\newcommand{\N}{\mathbb{N}}
\newcommand{\R}{\mathbb{R}}
\newcommand{\Z}{\mathbb{Z}}
\newcommand{\calA}{\mathcal{A}}
\newcommand{\calB}{\mathcal{B}}
\newcommand{\calBform}{\mathcal{B}^{\, \mathrm{form}\, }}
\newcommand{\calF}{\mathcal{F}}
\newcommand{\calM}{\mathcal{M}}
\newcommand{\calS}{\mathcal{S}}
\newcommand{\supp}{\mathrm{supp}\, }
\newcommand{\card}{\mathrm{card}\, }
\newcommand{\K}{\mathbb{K}}
\newcommand{\op}{\mathrm{Op}}
\newcommand{\vs}{\vspace{12pt}}
\begin{document}
\title[Bilinear pseudo-differential operators
of $S_{0,0}$-type]
{Boundedness  
of bilinear pseudo-differential operators 
of $S_{0,0}$-type in 
Wiener amalgam spaces and in Lebesgue spaces} 

\author[T. Kato]{Tomoya Kato}
\author[A. Miyachi]{Akihiko Miyachi}
\author[N. Tomita]{Naohito Tomita}

\address[T. Kato]
{Division of Pure and Applied Science, 
Faculty of Science and Technology, Gunma University, 
Kiryu, Gunma 376-8515, Japan}

\address[A. Miyachi]
{Department of Mathematics, 
Tokyo Woman's Christian University, 
Zempukuji, Suginami-ku, Tokyo 167-8585, Japan}

\address[N. Tomita]
{Department of Mathematics, 
Graduate School of Science, Osaka University, 
Toyonaka, Osaka 560-0043, Japan}

\email[T. Kato]{t.katou@gunma-u.ac.jp}
\email[A. Miyachi]{miyachi@lab.twcu.ac.jp}
\email[N. Tomita]{tomita@math.sci.osaka-u.ac.jp}

\date{\today}

\keywords{Bilinear pseudo-differential operators,
bilinear H\"ormander symbol classes, 
Wiener amalgam spaces, 
Gagliardo-Nirenberg inequality}
\thanks{This work was supported by JSPS KAKENHI, 
Grant Numbers 
20K14339 (Kato), 20H01815 (Miyachi), and 20K03700 (Tomita).}
\subjclass[2020]{35S05, 42B15, 42B35}

\begin{abstract}
We extend and improve 
the known results 
about the boundedness 
of the bilinear pseudo-differential 
operators with symbols in the
bilinear H\"ormander class $BS^{m}_{0,0}(\R^n)$.  
We consider wider classes of symbols 
and improve estimates 
for the corresponding operators. 
A key idea is to consider the 
operators in 
Wiener amalgam spaces. 
\end{abstract}

\maketitle

\section{Introduction}\label{Introduction}
\subsection{Background}

For a bounded measurable 
function $\sigma = \sigma (x, \xi_1, \xi_2)$ on $\R^n \times \R^n \times \R^n$,
the bilinear pseudo-differential operator
$T_{\sigma}$ is defined by
\[
T_{\sigma}(f_1,f_2)(x)
=\frac{1}{(2\pi)^{2n}}
\int_{\R^n \times \R^n}e^{i x \cdot(\xi_1+\xi_2)}
\sigma(x,\xi_1,\xi_2)\widehat{f_1}(\xi_1)
\widehat{f_2}(\xi_2)\, d\xi_1 d\xi_2, 
\quad x \in \R^n, 
\]
for $f_1,f_2 \in \calS(\R^n)$. 
The function $\sigma$ is called the symbol of the operator 
$T_{\sigma}$. 
In particular, 
if the symbol $\sigma$ is independent of $x$, that is 
 $\sigma = \sigma (\xi_1, \xi_2)$, 
then $\sigma$ is called the Fourier multiplier and 
$T_{\sigma}$ is called the bilinear Fourier multiplier operator.
The study of bilinear pseudo-differential operators
started from a problem of the Calder\'on commutators
in the early works of Coifman and Meyer \cite{CM-Ast, CM-AIF}
and is continued by a lot of researchers.

In this paper, we consider the boundedness of 
the bilinear pseudo-differential operators. 
We shall 
use the following terminology. 
Let $X_1,X_2$, and $Y$ be function spaces on $\R^n$ 
equipped with quasi-norms 
$\|\cdot \|_{X_1}$, $\|\cdot \|_{X_2}$, 
and $\|\cdot \|_{Y}$, 
respectively.  
If there exists a constant $C$ such that 
\begin{equation}\label{eqboundedness-X_1X_2Y}
\|T_{\sigma}(f_1,f_2)\|_{Y}
\le C \|f_1\|_{X_1} \|f_2\|_{X_2} 
\;\;
\text{for all}
\;\;
f_1\in \calS \cap X_1  
\;\;
\text{and}
\;\;
f_2\in \calS \cap X_2,  
\end{equation}
then, 
with a slight abuse of terminology, 
we say that 
$T_{\sigma}$ is bounded from 
$X_1 \times X_2$ to $Y$ 
and write 
$T_{\sigma}: X_1 \times X_2 \to Y$.  
The smallest constant $C$ of 
\eqref{eqboundedness-X_1X_2Y} 
is denoted by 
$\|T_{\sigma}\|_{X_1 \times X_2 \to Y}$. 
If $\calA$ is a class of symbols,  
we denote by $\mathrm{Op}(\calA)$
the class of all bilinear 
operators $T_{\sigma}$ 
corresponding to $\sigma \in \calA$. 
If $T_{\sigma}: X_1 \times X_2 \to Y$ 
for all $\sigma \in \calA$, 
then we write 
$\mathrm{Op}(\calA) 
\subset B (X_1 \times X_2 \to Y)$.

The bilinear H\"ormander symbol class
$BS^m_{\rho,\delta}=
BS^m_{\rho,\delta}(\R^n)$,
$m \in \R$, $0 \le \rho, \delta \le 1$,
consists of all
$\sigma(x,\xi_1,\xi_2) \in C^{\infty}(\R^n \times \R^n \times \R^n)$
such that
\[
\left|
\partial^{\gamma }_x 
\partial^{\beta_1}_{\xi_1} 
\partial^{\beta_2}_{\xi_2} 
\sigma(x,\xi_1,\xi_2)
\right|
\le C_{\gamma,\beta_1,\beta_2}
(1+|\xi_1|+|\xi_2|)^{m+\delta|\gamma|-\rho(|\beta_1|+|\beta_2|)}
\]
for all multi-indices
$\gamma,\beta_1,\beta_2 \in (\N_0)^n
=\{0, 1, 2, \dots \}^n$. 
The 
class $BS^m_{\rho,\delta}$ was introduced by 
B\'enyi--Maldonado--Naibo--Torres 
\cite{BMNT} 
and investigated by 
B\'enyi--Bernicot--Maldonado--Naibo--Torres 
\cite{BBMNT}. 
See these papers for the 
basic properties of the class 
$BS^m_{\rho,\delta}$  
including 
symbolic calculus, duality, and interpolation.

We shall recall boundedness properties of
bilinear pseudo-differential operators in
the class $\op(BS^m_{\rho,\delta})$.
If $\rho=1$ and $\delta < 1$, 
the situation is similar to the linear case.
In this case, 
the bilinear Calder\'on--Zygmund theory 
developed by Grafakos--Torres \cite{GT}
implies that the bilinear operator $T_{\sigma}$ with 
$\sigma$ in $BS^{0}_{1,\delta}$
is bounded from $L^{p_1} \times L^{p_2}$ to $L^p$
for $1 < p_1, p_2 \le \infty$ and $1/p = 1/p_1 + 1/p_2>0$; 
see also Coifman--Meyer \cite{CM-Ast, CM-AIF}, 
B\'enyi--Torres \cite{BT-2003}, and 
B\'enyi--Maldonado--Naibo--Torres \cite{BMNT}. 
Here, it should be remarked that 
the condition $1/p = 1/p_1 + 1/p_2$ is necessary,
since constant functions belong to 
$BS^{0}_{1,\delta}$, $\delta < 1$, 
and the operator $T_{\sigma}$ for $\sigma \equiv 1$ 
gives the pointwise product of two functions, 
which admits the estimate 
$L^{p_1} \times L^{p_2} \to L^p$ 
only for $1/p = 1/p_1 + 1/p_2$. 
However, in the case $\rho < 1$,
the situation of the bilinear operators 
varies considerably
from the linear ones.
To see this, let us consider the special case $\rho=\delta=0$. 
B\'enyi--Torres \cite{BT-2004} proved that 
for any $1 \le p_1, p_2, p < \infty$ satisfying 
$1/p = 1/p_1 + 1/p_2$
there exists a multiplier in $BS^0_{0,0}$ 
for which the corresponding bilinear 
Fourier multiplier operator is not bounded 
from $L^{p_1} \times L^{p_2}$ to $L^p$.
In particular 
the operators in $\op(BS^{0}_{0,0})$ are not always
bounded from $L^{2} \times L^{2}$ to $L^{1}$, 
which presents a remarkable contrast to the well-known 
Calder\'on--Vaillancourt theorem \cite{CV} 
for linear pseudo-differential operators. 
In what follows we shall be interested in 
the case $\rho=\delta=0$.
(For the general case
$0 \le \delta \le \rho < 1$, 
see, for instance, 
\cite{BBMNT, KS, 
MRS-2014, MT-2018 AIFG, MT-2019 JPDOA, Nai-2015}
and the references therein.)

From the result of 
B\'enyi--Torres \cite{BT-2004} mentioned above, 
in order to have 
$\op(BS^{m}_{0,0}) \subset B( L^{p_1} \times L^{p_2} \to L^{p})$,
the order $m$ must be negative. 
So, to find the condition on $m$ is a problem. 
After the works of 
Michalowski--Rule--Staubach \cite{MRS-2014}
and 
B\'enyi--Bernicot--Maldonado--Naibo--Torres \cite{BBMNT}, 
in the paper \cite{MT-2013}, 
the second and the third named authors 
of the present paper 
proved that, for 
$0 < p_1, p_2, p \le \infty$ satisfying 
$1/p = 1/p_1 + 1/p_2$,  
the relation 
\begin{equation} \label{eqbddp1p2p}
\op(BS^{m}_{0,0}) \subset
B (h^{p_1} \times h^{p_2} \to h^{p}), 
\end{equation}
where $h^{p_1}$ (resp.\  $h^{p_2}$, $h^{p}$) 
should be replaced by $bmo$ 
if $p_1=\infty$ (resp.\ $p_2=\infty$, $p=\infty$), 
holds 
if and only if $m \le m(p_1,p_2,p)$ 
with 
\begin{equation}\label{eqcriticalorder}
m(p_1,p_2,p)
= 
\min (n/2, n/p) 
- 
\max (n/2, n/p_1) 
-\max (n/2, n/p_2).
%
\end{equation} 
(The definitions of the spaces 
$h^r$ and $bmo$ will be given in the next section.)

Recently, there appeared several works 
that consider symbol classes of $S_{0,0}$-type 
different from $BS^{m}_{0,0}(\R^n)$. 
Notice that, by the closed graph theorem, the assertion
\eqref{eqbddp1p2p} is equivalent to the estimate 
\begin{equation*}
\left\| 
T_{\sigma} \right\|_{h^{p_1}\times h^{p_2} \to 
h^{p}} 
\lesssim 
\sup_{|\alpha|\le M} 
\left\| 
(1+|\xi_1|+|\xi_2|)^{-m}\, 
\partial^{\alpha}_{x,\xi_1, \xi_2}
\sigma (x, \xi_1, \xi_2)
\right\|_{L^{\infty}(\R^n \times \R^n\times \R^n)}
\end{equation*}
with some $M \in \N$. 
Some recent works consider estimates of 
the operator norm of $T_{\sigma}$ 
in terms of different kinds of 
norms of $\sigma$. 
The first one of such researches is 
Grafakos--He--Honz\'ik \cite{GHH}, 
in which the authors 
considered the $L^2 \times L^2 \to L^1$ 
estimate for bilinear Fourier multiplier operators 
and proved the estimate 
\begin{equation*}
\left\| 
T_{\sigma} \right\|_{L^{2}\times L^{2} \to 
L^{1}} 
\lesssim 
\bigg(\sup_{|\alpha|\le M} 
\left\| 
\partial^{\alpha}_{\xi_1, \xi_2}
\sigma (\xi_1, \xi_2)
\right\|_{L^{\infty}(\R^n \times \R^n)} \bigg)^{1/5}
\bigg( \left\|
\sigma (\xi_1, \xi_2)
\right\|_{L^{2} (\R^n \times \R^n)}
\bigg)^{4/5}; 
\end{equation*}
see \cite[Corollary 8]{GHH}. 
Grafakos--He--Slav\'ikov\'a \cite[Theorem 1.3]{GHS} 
generalized the above result by showing  
the estimate 
\begin{equation}\label{eq221-4-epsilon}
\left\| 
T_{\sigma} \right\|_{L^{2}\times L^{2} \to 
L^{1}} 
\lesssim 
\bigg(\sup_{|\alpha|\le M} 
\left\| 
\partial^{\alpha}_{\xi_1, \xi_2}
\sigma (\xi_1, \xi_2)
\right\|_{L^{\infty}(\R^n \times \R^n)} \bigg)^{1- q /4}
\bigg( \left\|
\sigma (\xi_1, \xi_2)
\right\|_{L^{q} (\R^n \times \R^n)}
\bigg)^{q/4}  
\end{equation}
for any $1\le q <4$. 
The authors of 
\cite{GHH} and \cite{GHS} 
applied the above estimates to prove 
boundedness of 
bilinear singular integral operators with rough kernels.  
In \cite{KMT-arxiv}, 
the present authors considered symbols that satisfy 
\begin{equation}\label{eqcondition-for-BSW00}
\left|
\partial^{\alpha}_{x, \xi_1, \xi_2} 
\sigma(x,\xi_1,\xi_2)
\right|
\le C_{\alpha} 
W (\xi_1, \xi_2)
\end{equation}
for all multi-indices $\alpha \in (\N_0)^{3n}$ 
and, 
under certain mild condition on $W$, 
proved the estimate   
\begin{equation}\label{eq221-calB}
\left\| 
T_{\sigma} \right\|_{L^{2}\times L^{2} \to 
L^{1}} 
\lesssim 
\|W\|_{\calB}, 
\end{equation}
where $\|W\|_{\calB}$ denotes the smallest 
constant $c$ such that the inequality 
\begin{equation*}
\sum_{\nu_1, \nu_2 \in \Z^n} 
W(\nu_1, \nu_2) 
A(\nu_1) 
B(\nu_2)
C(\nu_1+\nu_{2})
\le c
\|A\|_{\ell^{2} (\Z^n) } 
\|B\|_{\ell^{2} (\Z^n) } 
\|C\|_{\ell^{2} (\Z^n) } 
\end{equation*}
holds for all nonnegative 
functions $A,B,C$ on $\Z^n$. 
From the estimate 
\eqref{eqcondition-for-BSW00}-\eqref{eq221-calB}, 
it is possible to derive the estimate 
\begin{equation}\label{eq221-4infty}
\|T_{\sigma}\|_{L^{2} \times L^{2} \to L^{1} } 
\le 
c 
\sum_{|\alpha|\le K} 
\left\|
\sup_{x\in \R^n} \left| 
\partial_{x,\xi_1, \xi_2}^{\alpha} 
\sigma (x,\xi_1, \xi_2)
\right| 
\right\|_{L^{4,\infty}_{\xi_1, \xi_2} (\R^n \times \R^n)}  
\end{equation}
with $K$ a sufficiently large positive integer 
(see the argument in Subsection 
\ref{proofofmain-cor_1} of the present paper). 
Slav\'ikov\'a \cite{Slavikova}  
proved the estimate 
\eqref{eq221-4infty} 
for the Fourier multiplier case  
in a refined form. 
In \cite{KMT-arxiv-2}, 
the present authors 
pushed forward the method of 
\cite{KMT-arxiv} 
and, utilizing 
the $L^2$-based amalgam spaces,  
generalized the estimates 
\eqref{eq221-calB} and $\eqref{eq221-4infty}$ 
to the case 
$L^{p_1}\times L^{p_2} \to L^p$ 
with 
$2 \le p_1, p_2 \le \infty$ and  
$1/2 \le 1/p\le 1/p_1 + 1/p_2$;   
see \cite[Theorem 6.1]{KMT-arxiv-2}.  
An attempt to generalize 
\eqref{eq221-4-epsilon} and 
\eqref{eq221-4infty} to the case 
$L^{p_1}\times L^{p_2} \to L^p$ 
for all $p_1, p_2 \in [1, \infty]$ 
was made by 
Buri\'ankov\'a--Grafakos--He--Honz\'ik 
\cite{BGHH}; 
they considered 
the special multiplier defined by 
\[
m_{E, \Phi}(\xi_1, \xi_2)
=
\sum_{(\nu_1, \nu_2)\in E}
\Phi (\xi_1 - \nu_1, \xi_2 - \nu_2), 
\]
where 
$\Phi$ is a $C^{\infty}$ function 
on $\R^{2n}$ supported in the ball 
$\{ |\zeta| \le 1/20\}$ 
and $E\subset \Z^n \times \Z^n$, 
and proved estimates of the form 
\begin{equation}\label{eqlattice-bump-alpha}
\left\| 
T_{m_{E, \Phi}} 
\right\|_{L^{p_1}\times L^{p_2} \to 
L^{p}} 
\lesssim 
(\card E)^{\alpha (p_1, p_2, p)}  
\end{equation}
for $1 \le p_1,p_2 < \infty$ and 
$1/p = 1/p_1+1/p_2$. 
In particular, 
the critical $\alpha(p_1,p_2, p)$  
is obtained in the case $p> 1$; see 
\cite[Theorem 1.2]{BGHH}.

Now, the purpose of the present paper is 
to generalize and improve the results 
mentioned above. 
We shall consider 
symbols satisfying 
\eqref{eqcondition-for-BSW00} 
and prove estimate 
of the forms 
\eqref{eq221-4-epsilon}, 
\eqref{eq221-calB}, and 
\eqref{eq221-4infty} 
with 
$L^{2}\times L^{2} \to L^{1}$ replaced by 
$L^{p_1}\times L^{p_2} \to L^p$ 
for all $p_1, p_2, p$ 
satisfying $1\le p_1, p_2 \le \infty$ and 
$0\le 1/p\le 1/p_1 + 1/p_2$. 
In particular, our result includes  
\eqref{eqlattice-bump-alpha} 
with critical $\alpha (p_1, p_2, p)$ for $p\le 1$. 
It should also be remarked that our results generalize and 
improve some results of 
\cite{MT-2013}. 
The basic method of the present paper 
is similar to  
\cite{KMT-arxiv} and \cite{KMT-arxiv-2}. 
A new idea is to use 
the Wiener amalgam spaces.

In the following subsections, 
we shall give precise statements of 
our main results 
and make some comments on them.

\subsection{Main results}

Here we shall state the main results of this paper.

We use the following symbol class of $S_{0,0}$-type.

\begin{defn}\label{thBSW00} 
For a nonnegative function 
$W$ on $\R^n \times \R^n$, 
we denote by 
$BS^{W}_{0,0}=BS^{W}_{0,0}(\R^n)$ the set of all those 
smooth functions 
$\sigma = \sigma (x, \xi_1, \xi_2)$ 
on $\R^n \times \R^n \times \R^n$ 
that satisfy the estimate 
\eqref{eqcondition-for-BSW00}
for all multi-indices
$\alpha \in (\N_0)^{3n}$. 
We shall call $W$ the {\it weight function\/} 
of the class $BS^{W}_{0,0}(\R^n)$. 
\end{defn}

For the weight function
$W(\xi_1,\xi_2) = (1+|\xi_1|+|\xi_2|)^{m}$, 
$m\in\R$, 
the class $BS_{0,0}^{W}$ 
is identical with the bilinear H\"ormander symbol class
$BS_{0,0}^{m}$.

We introduce the following. 

\begin{defn}\label{thclassB}
(1) 
Let $0 < q_1,q_2,q \leq \infty$. 
For nonnegative functions $V$ on $\Z^n \times \Z^n$, we define 
\begin{equation*}
\|V\|_{\calB_{q_1,q_2,q}}
=
\sup 
\Bigg\| 
\sum_{ \nu_1,\nu_2 \in \Z^n : \nu_1+ \nu_2=\nu } 
V(\nu_1, \nu_2) A(\nu_1) B(\nu_2) 
\Bigg\|_{\ell^q_{\nu}(\Z^n)}
\end{equation*}
with 
the supremum taken over all 
nonnegative functions
$A \in \ell^{q_1}(\Z^n)$ and $B \in \ell^{q_2}(\Z^n)$ 
with 
$\|A\|_{\ell^{q_1}}=\|B\|_{\ell^{q_2}}=1$. 
We denote by 
$\calB_{q_1,q_2,q}=\calB_{q_1,q_2,q}(\Z^n \times \Z^n)$
the set of 
all nonnegative functions 
$V$ on $\Z^n \times \Z^n$  
such that $\|V\|_{\calB_{q_1,q_2,q}} < \infty$.

(2) 
Let $q_1,q_2,q_3 \in [1, \infty]$.  
For nonnegative functions $V$ on $\Z^n \times \Z^n$, we define 
\begin{equation*}
\|V\|_{\calBform_{q_1,q_2,q_3}}
= 
\sup 
\bigg\{ \sum_{\nu_1, \nu_2\in \Z^n} 
V(\nu_1, \nu_2) A(\nu_1) B(\nu_2) C(\nu_1 + \nu_2)
\bigg\} 
\end{equation*}
with the supremum taken over all 
nonnegative functions 
$A, B, C$ on $\Z^n$ such that 
$\|A\|_{\ell^{q_1}}=\|B\|_{\ell^{q_2}}=\|C\|_{\ell^{q_3}}=1$. 
We denote by 
$\calBform_{q_1,q_2,q_3} =
\calBform_{q_1,q_2,q_3}(\Z^n \times \Z^n)$ 
the set of all nonnegative 
functions $V$ on $\Z^n \times \Z^n$ such that 
$\|V\|_{\calBform_{q_1,q_2,q_3}}< \infty$. 
\end{defn}

If $q_1, q_2, q \in [1, \infty]$, then 
by duality it follows that 
$\calB_{q_1,q_2,q} = \calBform_{q_1, q_2, q^{\prime}}$, 
where $q^{\prime}$ is the conjugate index, $1/q + 1/q^{\prime}=1$. 
The class $\calBform_{2,2,2}$ was already 
introduced in \cite[Definition 1.2]{KMT-arxiv}.

We use the following notation: 
for nonnegative bounded functions $V$  
on $\Z^n \times \Z^n$, 
we define 
\begin{equation*}
\widetilde{V} (\xi_1, \xi_2)
=
\sum_{\nu_1, \nu_2 \in \Z^n} 
V(\nu_1, \nu_2) 
\ichi_{Q} (\xi_1 - \nu_1) 
\ichi_{Q} (\xi_2 - \nu_2), 
\quad 
(\xi_1, \xi_2) \in \R^n \times \R^n,  
\end{equation*}
where $Q=[-1/2, 1/2)^n$. 


Now, Theorem 
\ref{thmain-thm-WA} below 
is the basis of our main results. 
This theorem considers boundedness 
of bilinear pseudo-differential operators 
in Wiener amalgam spaces $W^{p,q}$; 
the definition of Wiener amalgam spaces will be given 
in Subsection \ref{secamalgam}.

\begin{thm}\label{thmain-thm-WA} 
Let $p_1,p_2,p, q_1,q_2,q \in (0,\infty]$ 
and let 
$V$ be a nonnegative bounded function on $\Z^n\times \Z^n$ 
that is not identically equal to $0$. 
Then the relation 
\[
\mathrm{Op}
(BS^{\widetilde{V}}_{0,0} (\R^n))
\subset 
B 
(W^{p_1,q_1} \times W^{p_2,q_2} \to W^{p,q})
\]
holds if and only if 
$1/p \le 1/p_1 + 1/p_2$ and 
$V \in \calB_{q_1,q_2,q}(\Z^n \times \Z^n)$. 
\end{thm}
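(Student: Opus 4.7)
The plan is to match the unit-cube partition structure built into the symbol class $BS^{\widetilde V}_{0,0}$ with the natural unit-cube frequency decomposition encoded by the Wiener amalgam spaces $W^{p,q}$. Both directions of the iff then reduce to the discrete inequality defining $\calB_{q_1,q_2,q}$.

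\emph{Sufficiency.} Take a smooth partition of unity $\{\psi(\xi - \nu)\}_{\nu \in \Z^n}$ subordinate to the unit cubes and decompose
\begin{equation*}
\sigma(x, \xi_1, \xi_2) = \sum_{\nu_1, \nu_2 \in \Z^n} \sigma_{\nu_1, \nu_2}(x, \xi_1, \xi_2), \qquad \sigma_{\nu_1, \nu_2} := \sigma \cdot \psi(\xi_1 - \nu_1)\psi(\xi_2 - \nu_2).
\end{equation*}
The hypothesis $\sigma \in BS^{\widetilde V}_{0,0}$ gives $|\partial^\alpha \sigma_{\nu_1,\nu_2}| \lesssim V(\nu_1, \nu_2)$ uniformly in $\nu_1,\nu_2$. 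Each piece $T_{\sigma_{\nu_1, \nu_2}}(f_1, f_2)$ depends only on the frequency-localized components $f_{i,\nu_i} := \calF^{-1}[\psi(\cdot-\nu_i)\widehat{f_i}]$ and its spectrum lies in a bounded neighborhood of $\nu_1 + \nu_2$, so it contributes only to the $\ell^q$-slot at $\nu = \nu_1 + \nu_2$ on the $W^{p,q}$ side. Integration by parts against $e^{i x \cdot (\xi_1 + \xi_2)}\sigma_{\nu_1,\nu_2}$ produces a rapidly decaying kernel of amplitude $O(V(\nu_1, \nu_2))$, which yields a pointwise bound
\begin{equation*}
|T_{\sigma_{\nu_1,\nu_2}}(f_1, f_2)(x)| \lesssim V(\nu_1, \nu_2) \, (M f_{1,\nu_1})(x) \, (M f_{2,\nu_2})(x),
\end{equation*}
with $M$ a Peetre-type maximal function adapted to unit frequency scale. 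Applying H\"older in the spatial variable over unit cubes (using $1/p \le 1/p_1 + 1/p_2$) and taking $\ell^q_\nu$ after collecting pieces with $\nu_1 + \nu_2 = \nu$ gives
\begin{equation*}
\|T_\sigma(f_1, f_2)\|_{W^{p,q}} \lesssim \Big\|\sum_{\nu_1 + \nu_2 = \nu} V(\nu_1, \nu_2) A(\nu_1) B(\nu_2)\Big\|_{\ell^q_\nu} \le \|V\|_{\calB_{q_1, q_2, q}} \|A\|_{\ell^{q_1}} \|B\|_{\ell^{q_2}},
\end{equation*}
where $A(\nu) \approx \|M f_{1,\nu}\|_{L^{p_1}}$ and $B(\nu) \approx \|M f_{2,\nu}\|_{L^{p_2}}$, yielding $\|A\|_{\ell^{q_1}} \lesssim \|f_1\|_{W^{p_1,q_1}}$ and the analogue for $f_2$.

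\emph{Necessity.} Given nonnegative $A, B$ with $\|A\|_{\ell^{q_1}} = \|B\|_{\ell^{q_2}} = 1$, set $f_i(x) := \varphi(x) \sum_\nu a_i(\nu) e^{i\nu \cdot x}$ for a fixed nice bump $\varphi$ and $a_1 = A$, $a_2 = B$; a direct computation shows $\|f_i\|_{W^{p_i, q_i}} \approx 1$ uniformly in $p_i$ thanks to the smoothness of $\varphi$. Take $\sigma$ to be a smoothed version of $\sum_{\nu_1,\nu_2} V(\nu_1, \nu_2)\psi(\xi_1 - \nu_1)\psi(\xi_2 - \nu_2)$, which lies in $BS^{\widetilde V}_{0,0}$ by construction. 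Then $T_\sigma(f_1, f_2)$ has $W^{p,q}$-norm comparable to $\big\|\sum_{\nu_1 + \nu_2 = \nu} V(\nu_1, \nu_2) A(\nu_1) B(\nu_2)\big\|_{\ell^q_\nu}$, so the operator bound forces $V \in \calB_{q_1, q_2, q}$. The inequality $1/p \le 1/p_1 + 1/p_2$ is extracted by a dilation/modulation test: specialize to $V$ concentrated at a single site, take $f_i = g(\cdot)$ for a Schwartz bump, and compare scalings as the bump is shrunk.

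The principal difficulty I foresee is the single-piece pointwise majorant and its $W^{p,q}$-bookkeeping in the full quasi-Banach range $p, q \in (0, \infty]$. Since duality and convexity fail for exponents $< 1$, the argument must proceed through Peetre-type maximal functions and pointwise domination from the outset, with care taken to check that the $W^{p,q}$ norm is independent of the choice of window and that the narrow frequency support of each piece lets its $W^{p,q}$-contribution collapse to an $L^p$-norm over a single frequency cube indexed by $\nu_1 + \nu_2$.
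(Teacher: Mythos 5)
Your necessity argument closely tracks the paper's: the same test symbol built from $V$ and unit-cube bumps, the same lattice-exponential test functions $\sum_\nu a(\nu)e^{i\nu\cdot x}\varphi(x)$, and a dilation to extract $1/p\le 1/p_1+1/p_2$. The paper packages the dilation into the same test functions by replacing $\varphi(x)$ with $(\calF^{-1}\varphi)(\lambda^{-1}x)$ and letting $\lambda\to\infty$, getting both conclusions from one family; your separate scaling test works just as well. This direction is fine.

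The sufficiency direction has a genuine gap. After the unit-cube decomposition $\sigma=\sum_{\nu_1,\nu_2}\sigma_{\nu_1,\nu_2}$, you assert that the piece $T_{\sigma_{\nu_1,\nu_2}}(f_1,f_2)$ has spectrum ``in a bounded neighborhood of $\nu_1+\nu_2$'' and hence ``contributes only to the $\ell^q$-slot at $\nu=\nu_1+\nu_2$.'' This is true for multipliers but \emph{false} when $\sigma$ depends on $x$: the $x$-Fourier support of $\sigma_{\nu_1,\nu_2}(\cdot,\xi_1,\xi_2)$ is all of $\R^n$ (with rapid decay from smoothness), so the output frequency support is only approximately localized and leaks across all slots. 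Your subsequent pointwise maximal-function bound is legitimate as a pointwise estimate, but it does not interact with the frequency-indexed $\ell^q_k$ structure in the $W^{p,q}$-norm, and there is no mechanism in your argument to convert it into the discrete convolution over $\nu_1+\nu_2=\nu$ that defines $\calB_{q_1,q_2,q}$. The paper resolves exactly this by a Coifman--Meyer Fourier series expansion of $\sigma_{\nu_1,\nu_2}$ in $(\xi_1,\xi_2)$, which separates the $x$-dependence into coefficients $P_{\boldsymbol{\nu},\boldsymbol{k}}(x)$, followed by a P\"aiv\"arinta--Somersalo decomposition of those coefficients in the $x$-frequency variable into pieces supported near $\ell\in\Z^n$ with summable weights $\langle\ell\rangle^{-2N}$. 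After this the term indexed by $(\boldsymbol{\nu},\boldsymbol{k},\ell)$ genuinely has Fourier support near $\nu_1+\nu_2+\ell$, the $\ell^q_\mu$-collapse is legitimate, and Nikol'skij's inequality is what makes the quasi-Banach range ($p,q<1$) go through. Secondarily, the bound $|\partial^\alpha\sigma_{\nu_1,\nu_2}|\lesssim V(\nu_1,\nu_2)$ with a smooth bump $\psi$ of slightly enlarged support picks up neighboring values of $\widetilde V$, which requires first majorizing $\widetilde V$ by a moderate function (the paper's Proposition~\ref{thVast}); your proposal does not address this.
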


It is known that 
the Wiener amalgam spaces $W^{p,2}$, $0 < p \le \infty$,
are identical with the $L^2$-based amalgam spaces.
The results on 
pseudo-differential operators in 
$L^2$-based amalgam spaces 
given in \cite[Theorem 1.3]{KMT-arxiv-2} 
(when restricted to bilinear operators)  
are the special case $q_1=q_2=q=2$ 
of Theorem \ref{thmain-thm-WA}.

Several embedding relations 
between the Wiener amalgam spaces and 
the Lebesgue spaces, the local Hardy spaces, and 
$bmo$ are known (see Subsection  \ref{secamalgam}).  
By combining those embedding relations 
with Theorem \ref{thmain-thm-WA},  
we obtain 
the boundedness criterion 
for the latter spaces, 
which reads as follows.

\begin{thm}\label{thLp-calB} 
Let $p_1,p_2 \in [1,\infty]$ and 
$p \in (0,\infty]$ satisfy 
$1/p \le 1/p_1 + 1/p_2$. 
Then the relation 
\begin{equation*}
\op (BS^{\widetilde{V}}_{0,0} (\R^n)) 
\subset B (L^{p_1} \times L^{p_2} \to h^{p}) 
\end{equation*} 
holds for every 
$V \in \calBform_{
\max (2, p_1^{\prime}), \max (2, p_2^{\prime}), \max (2,p)}
(\Z^n \times \Z^n)$, 
where $L^{p_1}$ (resp.\ $L^{p_2}$) can 
be replaced by $bmo$ 
if $p_1=\infty$ (resp.\ $p_2 = \infty$). 
\end{thm}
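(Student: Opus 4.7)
The plan is to reduce the statement to Theorem \ref{thmain-thm-WA} via embeddings between Lebesgue-type spaces and Wiener amalgam spaces. First, I would choose the amalgam exponents as
\[
q_1 = \max(2, p_1^{\prime}),
\quad
q_2 = \max(2, p_2^{\prime}),
\quad
q = (\max(2, p))^{\prime}.
\]
By the duality identification $\calB_{q_1,q_2,q} = \calBform_{q_1,q_2,q^{\prime}}$ noted just after Definition \ref{thclassB}, the assumption $V \in \calBform_{\max(2, p_1^{\prime}), \max(2, p_2^{\prime}), \max(2, p)}$ is exactly the statement $V \in \calB_{q_1,q_2,q}$. Combined with the standing hypothesis $1/p \le 1/p_1 + 1/p_2$, Theorem \ref{thmain-thm-WA} immediately delivers the amalgam boundedness
\[
\op(BS^{\widetilde{V}}_{0,0}(\R^n)) \subset B(W^{p_1,q_1} \times W^{p_2,q_2} \to W^{p,q}).
\]

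Next, I would invoke the embedding relations between Lebesgue/Hardy/BMO-type spaces and Wiener amalgam spaces, which are to be recorded in Subsection \ref{secamalgam}. Specifically, the input embeddings are
\[
L^{p_1} \hookrightarrow W^{p_1,q_1},
\qquad
L^{p_2} \hookrightarrow W^{p_2,q_2},
\]
with $L^{p_i}$ replaced by $bmo$ when $p_i = \infty$ (in which case $q_i = 2$), and the output embedding is
\[
W^{p,q} \hookrightarrow h^{p}.
\]
The latter splits into two regimes: when $p \ge 2$ we have $q = p^{\prime}$ and the embedding is the standard $W^{p,p^{\prime}} \hookrightarrow L^{p}$, while when $p \le 2$ we have $q = 2$ and one needs $W^{p,2} \hookrightarrow h^{p}$, which also covers the Hardy-space range $p \le 1$. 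Composing these embeddings with the amalgam estimate from the first step yields the claimed $L^{p_1} \times L^{p_2} \to h^{p}$ bound.

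The main potential obstacle is verifying these embeddings in the edge cases, namely the substitution $bmo \hookrightarrow W^{\infty,2}$ at $p_i = \infty$ and $W^{p,2} \hookrightarrow h^{p}$ for $p \in (0,1]$, where the inner $L^2$ amalgam component must dominate the local Hardy quasinorm. Once the amalgam-space subsection supplies these two facts (together with the elementary Hausdorff--Young-type embeddings $L^{p_i} \hookrightarrow W^{p_i,\max(2,p_i^{\prime})}$ in the finite range), the theorem follows by a direct composition and carries no further computation. Thus the substantive content of Theorem \ref{thLp-calB} lies entirely in Theorem \ref{thmain-thm-WA} and in the amalgam/Hardy embedding dictionary.
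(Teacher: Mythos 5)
Your proposal is correct and follows essentially the same route as the paper's proof of Theorem \ref{thLp-calB}: choose $q_1 = \max(2,p_1')$, $q_2 = \max(2,p_2')$, $q = (\max(2,p))'$, use the duality $\calB_{q_1,q_2,q} = \calBform_{q_1,q_2,q'}$, apply Theorem \ref{thmain-thm-WA}, and then compose with the embeddings $L^{p_i}\hookrightarrow W^{p_i,\max(2,p_i')}$, $bmo\hookrightarrow W^{\infty,2}$, and $W^{p,(\max(2,p))'}\hookrightarrow h^p$ from Proposition \ref{thWaembd}. Your case split for the output embedding (the $p\ge 2$ regime via $W^{p,p'}\hookrightarrow L^p$ and the $p\le 2$ regime via $W^{p,2}\hookrightarrow h^p$) is exactly what Proposition \ref{thWaembd} supplies, so there is no gap.
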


In the particular case  
$1/p = 1/p_1 + 1/p_2$,  
the class $\calBform$ of Theorem \ref{thLp-calB}  
is assigned as follows.

\begin{center}
\begin{tikzpicture}[scale=0.9]
   \draw [thick, -stealth](-0,0)--(7,0) node [anchor=north]{$1/p_1$};
   \draw [thick, -stealth](0,-0)--(0,7) node [anchor=east]{$1/p_2$};
   \node [anchor=east] at (0,0) {0};
   \draw [thick](3,0)--(3,6);
   \draw [thick](0,3)--(6,3);
   \draw [thick](6,0)--(6,6);
   \draw [thick](0,6)--(6,6);
   \draw [thick](0,3)--(3,0);
   \node [anchor=north] at (3,0) {1/2};
   \node [anchor=east] at (0,3) {1/2};
   \node [anchor=north] at (6,0) {1};
   \node [anchor=east] at (0,6) {1};
   \node [font=\normalsize] at (2,2) {$\calBform_{2,2,2}$};
   \node [font=\normalsize] at (1,1) {$\calBform_{2,2,p}$};
   \node [font=\normalsize] at (4.5,1.5) {$\calBform_{p_1',2,2}$};
   \node [font=\normalsize] at (1.5,4.5) {$\calBform_{2,p_2',2}$};
   \node [font=\normalsize] at (4.5,4.5) {$\calBform_{p_1',p_2',2}$};
\end{tikzpicture} 
\end{center}

We shall write 
$\ell^q_{+}(\Z^n \times \Z^n)$ 
or 
$\ell^{q, \infty}_{+}(\Z^n \times \Z^n)$ 
to denote the nonnegative 
functions in the classes 
$\ell^q(\Z^n \times \Z^n)$ 
or 
$\ell^{q, \infty}(\Z^n \times \Z^n)$, 
respectively. 
The next proposition claims that 
certain 
$\ell^q_{+}(\Z^n \times \Z^n)$ 
or 
$\ell^{q, \infty}_{+}(\Z^n \times \Z^n)$ 
are included in the class 
$
\calBform_{
\max (2, p_1^{\prime}), \max (2, p_2^{\prime}), \max (2,p)}
$ of Theorem \ref{thLp-calB}.

\begin{prop}\label{thellq-ellqweak-calB}
Let $p_1, p_2 \in [1, \infty]$ and $p\in (0, \infty]$. 
With the notation 
\begin{equation*}
p_3 =
\left\{
\begin{array}{ll}
{\infty} & {\quad\text{if}\quad 0<p\le 1, }\\ 
{p^{\prime}} & {\quad\text{if}\quad 1<p\le \infty,  }
\end{array}
\right. 
\end{equation*}
we define $q=q(p_1, p_2, p)$ as follows: 
\begin{itemize}
\item[{\rm (I)}]  
if $1/p_1, 1/p_2, 1/p_3 \le 1/2$, then 
$q=4$; 

\item[{\rm (II)}] 
if $1/p_i, 1/p_j \le 1/2 \le 1/p_k $ for 
some permutation $(i,j,k)$ of $(1,2,3)$, 
then 
$q=2p_k$; 

\item[{\rm (III-1)}] 
if $1/p_i \le 1/2 \le 1/p_j, 1/p_k $ 
and 
$1/p_j + 1/p_k \le 3/2$ 
for some permutation $(i,j,k)$ of $(1,2,3)$, 
then 
$1/q= (1/2)(1/p_j + 1/p_k - 1/2)$; 

\item[{\rm (III-2)}]  
if $1/p_i \le 1/2 \le 1/p_j, 1/p_k $ 
and 
$1/p_j + 1/p_k \ge 3/2$ 
for some permutation $(i,j,k)$ of $(1,2,3)$, 
then 
$1/q= 1/p_j + 1/p_k - 1$;

\item[{\rm (IV-1)}] 
if $1/2 \le 1/p_1, 1/p_2, 1/p_3$ 
and if 
\begin{equation*}
{1}/{p_1}+ 
{1}/{p_2} 
\le 1+ {1}/{p_3}, 
\quad 
{1}/{p_2}+ 
{1}/{p_3} 
\le 1+ {1}/{p_1}, 
\quad 
{1}/{p_3}+ {1}/{p_1} 
\le 1+ {1}/{p_2},  
\end{equation*}
then 
$1/q= (1/2)(1/p_1 + 1/p_2 + 1/p_3 - 1)$; 

\item[{\rm (IV-2)}]  
if $1/2 \le 1/p_1, 1/p_2, 1/p_3$ 
and if 
${1}/{p_i}+ 
{1}/{p_j} 
\ge 1+ {1}/{p_k}$ 
for some permutation $(i,j,k)$ of $(1,2,3)$, 
then
 $1/q= 1/p_i + 1/p_j  - 1$.  
\end{itemize}
Then the inclusion 
\begin{equation}\label{eq-inclusion-aaa}
\ell^{q}_{+} (\Z^n\times \Z^n) 
\subset 
\calBform_{
\max (2, p_1^{\prime}), \max (2, p_2^{\prime}), \max (2,p)} (\Z^n\times \Z^n) 
\end{equation}
holds. 
Moreover, 
the stronger inclusion 
\begin{equation}\label{eq-inclusion-bbb}
\ell^{q,\infty}_{+} (\Z^n\times \Z^n) 
\subset 
\calBform_{
\max (2, p_1^{\prime}), \max (2, p_2^{\prime}), \max (2,p)}
 (\Z^n\times \Z^n) 
\end{equation}
holds 
in the following cases: 
\begin{itemize}
\item[{\rm (I)} ] 
$1/p_1, 1/p_2, 1/p_3 \le 1/2$; 
\item[{\rm (II$\ast$)}] 
$1/p_i, 1/p_j \le 1/2 \le 1/p_k < 1$ for 
some permutation $(i,j,k)$ of $(1,2,3)$;  
\item[{\rm (III-1$\ast$)}] 
$1/p_i \le 1/2 \le 1/p_j, 1/p_k $  
and 
$1/p_j + 1/p_k < 3/2$ 
for some permutation $(i,j,k)$ of $(1,2,3)$;

\item[{\rm (IV-1$\ast$)}] 
$1/2 \le 1/p_1, 1/p_2, 1/p_3$, 
\begin{equation*}
{1}/{p_1}+ 
{1}/{p_2} 
< 1+ {1}/{p_3}, 
\quad 
{1}/{p_2}+ 
{1}/{p_3} 
< 1+ {1}/{p_1}, 
\quad 
{1}/{p_3}+ {1}/{p_1} 
< 1 + {1}/{p_2},  
\end{equation*}
and $1/p_1 + 1/p_2 + 1/p_3 \le 2$. 
\end{itemize}
\end{prop}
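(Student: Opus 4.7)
My plan is to reformulate both inclusions as trilinear inequalities in $(V,A,B,C)$, handle the strong $\ell^{q}$-inclusion by H\"older and Young's convolution inequality (with an iterated-H\"older/Minkowski twist in the degenerate cases), and obtain the weak $\ell^{q,\infty}$-inclusion via a Lorentz-space H\"older combined with a real-interpolation argument along the admissible scaling face.

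Writing $r_i = 1/q_i$ with $q_1 = \max(2,p_1')$, $q_2 = \max(2,p_2')$, $q_3 = \max(2,p)$, the inclusion $\ell^q_+(\Z^n\times\Z^n) \subset \calBform_{q_1,q_2,q_3}$ is equivalent to the trilinear inequality
\[
\sum_{\nu_1,\nu_2\in\Z^n} V(\nu_1,\nu_2) A(\nu_1) B(\nu_2) C(\nu_1+\nu_2)
\le C \|V\|_{\ell^q} \|A\|_{\ell^{q_1}} \|B\|_{\ell^{q_2}} \|C\|_{\ell^{q_3}}
\]
for nonnegative $V,A,B,C$, and analogously for \eqref{eq-inclusion-bbb} with the $\ell^{q,\infty}$-norm of $V$ on the right. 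A direct arithmetic check confirms that in cases (I), (II), (III-1), (IV-1) the prescribed $q$ satisfies the scaling identity $1/q = 1-(r_1+r_2+r_3)/2$ together with the triangle condition $r_i \le r_j+r_k$ for every permutation $(i,j,k)$. Under these two conditions, applying H\"older in the $V$-variable and then (after the substitution $\widetilde A = A^{q'}$, etc.) invoking the discrete Young convolution inequality
\[
\sum \widetilde A(\nu_1) \widetilde B(\nu_2) \widetilde C(\nu_1+\nu_2)
\le \|\widetilde A\|_{\ell^{q_1/q'}} \|\widetilde B\|_{\ell^{q_2/q'}} \|\widetilde C\|_{\ell^{q_3/q'}}
\]
yields the desired strong-type bound (Young applies precisely because $q_i/q' \ge 1$ for each $i$, i.e., the triangle condition holds).

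For the degenerate cases (III-2) and (IV-2) the triangle condition fails at a single slot, which I denote by $k$ (the slot whose $r_k$ exceeds the sum of the other two). The alternative formula $1/q = 1/p_i + 1/p_j - 1$ is precisely what emerges from applying H\"older sequentially: separate the $k$-th factor first via H\"older in the corresponding variable, estimate the inner double sum by another H\"older (not Young) in the remaining variable, and then combine monotonicity of $\ell^p$-norms on counting measure with Minkowski's inequality for mixed norms to bound the residual mixed norm of $V$ by $\|V\|_{\ell^q(\Z^{2n})}$. The requirement $q \le q_k'$ needed for this bookkeeping is exactly the defining hypothesis of (III-2)/(IV-2) (e.g., $1/p_2+1/p_3 \ge 3/2$ translates to $q \le 2 = q_1'$ in the case $k=1$ of (III-2)).

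For \eqref{eq-inclusion-bbb} in the starred cases I would replace H\"older by the Lorentz-space version
\[
\sum V f \le C \|V\|_{\ell^{q,\infty}} \|f\|_{\ell^{q',1}},
\qquad f(\nu_1,\nu_2) = A(\nu_1) B(\nu_2) C(\nu_1+\nu_2),
\]
reducing the task to the Lorentz-refined trilinear bound
\[
\|A(\nu_1)B(\nu_2)C(\nu_1+\nu_2)\|_{\ell^{q',1}(\Z^{2n})}
\le C \|A\|_{\ell^{q_1}} \|B\|_{\ell^{q_2}} \|C\|_{\ell^{q_3}},
\]
which I would in turn derive by real interpolation between two neighboring strong-type Young inequalities on the admissible scaling face. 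The \emph{strict} inequalities in the hypotheses of (II$\ast$), (III-1$\ast$), (IV-1$\ast$) (and the unconstrained nature of (I)) guarantee that such two bracketing neighbors can be chosen inside the same regime, with critical exponents $q^{(0)} < q < q^{(1)}$ and adjacent $(q_1,q_2,q_3)$-triples that deform continuously into the target. The main obstacle is precisely this interpolation step: since the target $q$ lies on the boundary of the admissible scaling region for the fixed $(q_1,q_2,q_3)$, the bracketing must move along the scaling face itself, and a multilinear real-interpolation theorem of Janson type is required to convert the two strong-type endpoints into a single weak-type bound at the target parameters.
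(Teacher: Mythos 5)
Your proposal is correct and, at the level of core tools, rests on the same two pillars as the paper: a Brascamp--Lieb / ``H\"older plus Young'' inequality for the strong-type inclusion, and a Lorentz refinement obtained by real interpolation (Janson-type) for the weak-type inclusion. But the route differs in two places and is worth contrasting.

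For the non-triangle cases (III-2), (IV-2) you keep $(q_1,q_2,q_3)=(\max(2,p_1'),\max(2,p_2'),\max(2,p))$ throughout and run a two-step H\"older (first in the ``bad'' variable, then in the remaining one) followed by the embedding $\ell^q(\Z^{2n})\hookrightarrow \ell^{q_k'}(\ell^q)$, which is exactly where the defining hypothesis $1/p_i+1/p_j\ge 1+1/p_k$ (resp.\ $1/p_j+1/p_k\ge 3/2$) is used. This is a valid, more direct argument. The paper instead fixes the scheme ``first embed $\calBform_{q_1,q_2,q_3}\subset\calBform_{\max(2,p_1'),\max(2,p_2'),\max(2,p)}$, then apply the Brascamp--Lieb lemma'', and in (III-2)/(IV-2) simply chooses a different admissible triple $(q_1,q_2,q_3)$ (with one exponent lowered so the triangle condition is tight) before invoking the same lemma. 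Both land on the same $q$; your version has the merit of avoiding the reparameterization, the paper's has the merit of a single uniform mechanism.

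For the weak inclusion \eqref{eq-inclusion-bbb} your ``split off $V$ by Lorentz--H\"older and estimate $\|A(\nu_1)B(\nu_2)C(\nu_1+\nu_2)\|_{\ell^{q',1}}$'' is, by duality between $\ell^{q,\infty}$ and $\ell^{q',1}$ (legitimate here since $2\le q\le 4$ in the starred cases), exactly equivalent to the $4$-linear Lorentz estimate that the paper obtains from its Lemma~3.3(2). The paper also outsources the real-interpolation step (to Janson's multilinear interpolation via a cited reference), so you are not at a disadvantage in leaving it schematic; but be aware that the interpolation must move $(q_1,q_2,q_3)$ along the scaling face simultaneously with $q$, and the strict inequalities in (II$\ast$), (III-1$\ast$), (IV-1$\ast$) (equivalently, the paper's conditions $0<1/q_\ell<1-1/q$ together with $\sum 1/q_\ell\ge 1$) are precisely what allow the two bracketing strong-type endpoints to stay in the admissible open region. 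In short: no gap, but a slightly different packaging of the same analytic content.
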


Notice that $1\le q(p_1, p_2, p) \le 4$ in all cases 
and that $2\le q(p_1, p_2, p) \le 4$ in the cases 
\rm{(I)}, \rm{(II$\ast$)}, 
\rm{(III-1$\ast$)}, and \rm{(IV-1$\ast$)}. 
For a simpler presentation of 
$q(p_1, p_2, p)$ for the case $1/p=1/p_1+1/p_2$, 
see Remark \ref{thHoldercase} below.

Using Theorem \ref{thLp-calB} and 
Proposition \ref{thellq-ellqweak-calB}, 
we can 
prove the theorem below, 
which gives generalizations of 
the estimate \eqref{eq221-4infty} 
to the case $L^{p_1}\times L^{p_2} \to L^{p}$ 
with general $p_1, p_2, p$.

\begin{thm}\label{thLp-diff-Lq}
Suppose $p_1,p_2 \in [1,\infty]$ and 
$p \in (0,\infty]$ satisfy  
$1/p \le 1/p_1 + 1/p_2$. 
Let $q=q(p_1, p_2, p)$ be the number given in 
Proposition \ref{thellq-ellqweak-calB}. 
Then 
there exist a positive constant $c$ and a positive integer $K$ 
depending only on $n, p_1, p_2$, and $p$ such that 
the inequality 
\begin{equation}\label{eqLq-estimate-of-operator-norm}
\|T_{\sigma}\|_{L^{p_1} \times L^{p_2} \to h^{p} } 
\le 
c 
\sum_{|\alpha|\le K} 
\left\|
\sup_{x\in \R^n} \left| 
\partial_{x,\xi_1, \xi_2}^{\alpha}
\sigma (x, \xi_1, \xi_2) 
\right| 
\right\|_{L^{q}_{\xi_1, \xi_2} (\R^n \times \R^n)}
\end{equation} 
holds for all $\sigma \in C^{\infty}((\R^n)^3)$, 
where  
$L^{p_1}$ (resp.\ $L^{p_2}$) can be replaced by $bmo$ 
if $p_1=\infty$ (resp.\ $p_2 = \infty$).  
Moreover, in the cases {\rm (I)}, {\rm (II$\ast$)}, 
{\rm (III-1$\ast$)}, and {\rm (IV-1$\ast$)}, 
the estimate \eqref{eqLq-estimate-of-operator-norm},   
with the same replacement of 
$L^{p_1}$ or $L^{p_2}$ by $bmo$, 
holds with the  
$L^{q}_{\xi_1, \xi_2} (\R^n \times \R^n)$-norm 
on the right hand side replaced 
by the 
$L^{q, \infty}_{\xi_1, \xi_2} (\R^n \times \R^n)$-norm. 
\end{thm}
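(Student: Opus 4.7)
The plan is to reduce Theorem~\ref{thLp-diff-Lq} to Theorem~\ref{thLp-calB} combined with Proposition~\ref{thellq-ellqweak-calB} via a lattice discretization of the symbol. Fix an integer $K_{0}$ large enough that Theorem~\ref{thLp-calB} applies, and for $\sigma\in C^{\infty}((\R^{n})^{3})$ set
\[
V_{\sigma}(\nu_{1},\nu_{2}) := \sum_{|\alpha|\le K_{0}} \sup_{x\in\R^{n}} \sup_{(\xi_{1},\xi_{2})\in(\nu_{1},\nu_{2})+[-1,1]^{2n}} \bigl|\partial^{\alpha}_{x,\xi_{1},\xi_{2}}\sigma(x,\xi_{1},\xi_{2})\bigr|.
\]
Since $\widetilde{V_{\sigma}}(\xi_{1},\xi_{2})=V_{\sigma}(\nu_{1},\nu_{2})$ whenever $(\xi_{1},\xi_{2})\in(\nu_{1},\nu_{2})+Q$, this construction forces $\sigma\in BS^{\widetilde{V_{\sigma}}}_{0,0}(\R^{n})$. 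Theorem~\ref{thLp-calB} followed by Proposition~\ref{thellq-ellqweak-calB} then yields $\|T_{\sigma}\|_{L^{p_{1}}\times L^{p_{2}}\to h^{p}}\lesssim\|V_{\sigma}\|_{\ell^{q}(\Z^{n}\times\Z^{n})}$, together with the analogous bound in $\ell^{q,\infty}$ in the four cases (I), (II$\ast$), (III-1$\ast$), (IV-1$\ast$). The remaining task is to control these lattice (quasi-)norms by the right-hand side of \eqref{eqLq-estimate-of-operator-norm}.

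For the strong-type bound I would interchange the two suprema in the definition of $V_{\sigma}$; for each fixed $x$ and each $|\alpha|\le K_{0}$, apply the Sobolev embedding $W^{s,q}\hookrightarrow L^{\infty}$ on the cube $(\nu_{1},\nu_{2})+[-2,2]^{2n}$ (with $s>2n/q$) to the smooth function $\xi\mapsto|\partial^{\alpha}_{x,\xi}\sigma(x,\xi)|^{q}$; then pull $\sup_{x}$ back inside the resulting integral and sum over $\nu\in\Z^{2n}$ using the finite overlap of the enlarged cubes. This yields
\[
\|V_{\sigma}\|_{\ell^{q}(\Z^{2n})}^{q} \;\lesssim\; \sum_{|\gamma|\le K_{0}+s} \Bigl\|\sup_{x\in\R^{n}}|\partial^{\gamma}_{x,\xi}\sigma|\Bigr\|_{L^{q}(\R^{2n})}^{q},
\]
which is exactly \eqref{eqLq-estimate-of-operator-norm} with $K:=K_{0}+s$.

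For the weak-type refinement, Sobolev embedding is not available. A direct check of the formulas in Proposition~\ref{thellq-ellqweak-calB} shows that $2\le q\le 4$ in each of the four cases (I), (II$\ast$), (III-1$\ast$), (IV-1$\ast$), so in particular $q>1$ and the Hardy-Littlewood maximal operator $M$ is bounded on $L^{q,\infty}(\R^{2n})$. The strategy is to apply a Littlewood-Paley decomposition in the $\xi$-variables so that each piece of $\sup_{x}|\partial^{\alpha}\sigma(x,\cdot)|$ has essentially bounded spectrum and therefore satisfies a Peetre-type pointwise bound $\sup_{\eta\in\xi+[-1,1]^{2n}} G(\eta) \lesssim (M[G^{r}](\xi))^{1/r}$ for some $1<r<q$; then to sum the pieces, pass from lattice values to continuous level sets by a standard counting argument, and invoke the $L^{q/r,\infty}$-boundedness of $M$. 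The main obstacle is precisely this weak-type step: the Sobolev embedding is intrinsically an $L^{q}$ statement and the map $G\mapsto\sup_{\,\cdot\,+[-1,1]^{2n}}G$ is not bounded on $L^{q,\infty}$ for arbitrary $G$, so the Peetre/Littlewood--Paley route is essential; it works only when $q>1$, which is exactly why Proposition~\ref{thellq-ellqweak-calB} restricts the $\ell^{q,\infty}$ inclusion to the four ``interior'' cases.
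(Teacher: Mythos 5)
Your reduction and your treatment of the strong-type bound are essentially sound. The definition of $V_{\sigma}$, the observation that $\sigma\in BS^{\widetilde{V_{\sigma}}}_{0,0}$, and the local Sobolev embedding on overlapping cubes followed by finite-overlap summation do give $\|V_{\sigma}\|_{\ell^{q}}\lesssim\sum_{|\gamma|\le K}\|\sup_{x}|\partial^{\gamma}\sigma|\|_{L^{q}}$. This is a slightly different route from the paper's, which instead applies the global $L^{1}$-Sobolev inequality \eqref{eqSobolevL1} on $\R^{2n}$ to produce a single majorant of convolution form, $W=\sum_{\alpha,\beta}\big(\sup_{x}|\partial^{\beta}_{\xi}\partial^{\alpha}_{x,\xi}\sigma|\big)\ast\langle\cdot\rangle^{-2n-1}$, and then bounds $\|W\|_{L^{q}}$ by Minkowski. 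The paper's choice is not an aesthetic preference: the convolution form makes $W$ a moderate function with moderateness constants depending only on $n$, which is exactly what Proposition \ref{thmain-prop} needs. In your version $V_{\sigma}$ is not moderate, so the passage from Theorem \ref{thLp-calB} to the quantitative bound $\|T_{\sigma}\|\lesssim\|V_{\sigma}\|_{\ell^{q}}$ is not an immediate consequence of Theorem \ref{thLp-calB} and Proposition \ref{thellq-ellqweak-calB}; you would have to first pass to the moderate majorant furnished by Proposition \ref{thVast} and check that the $\ell^{q}$ (or $\ell^{q,\infty}$) norm is preserved. That is fixable but should be said.

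The genuine gap is in the weak-type part. You declare that ``Sobolev embedding is not available'' for $L^{q,\infty}$ and propose a Littlewood--Paley/Peetre substitute. The diagnosis is mistaken, and the substitute does not run. The pointwise inequality \eqref{eqSobolevL1} is not an $L^{q}$ statement: it holds pointwise for any $C^{2n}$ function of $\xi$, and it immediately dominates $\sup_{x}|\partial^{\alpha}\sigma(x,\cdot)|$ by a convolution with the fixed $L^{1}$ kernel $\langle\cdot\rangle^{-2n-1}$. The only place a restriction on $q$ enters is in the mapping properties of that convolution operator: convolution with an $L^{1}$ kernel is bounded on $L^{q,\infty}$ precisely when $q>1$, which is the situation in the cases (I), (II$\ast$), (III-1$\ast$), (IV-1$\ast$). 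This is the paper's entire weak-type argument, and it is considerably simpler than what you propose. By contrast, your Littlewood--Paley/Peetre route cannot be carried out as written: the function $\sup_{x}|\partial^{\alpha}\sigma(x,\cdot)|$ is a supremum of moduli of smooth functions, hence is in general neither smooth nor of compact Fourier support, so you cannot take a Littlewood--Paley decomposition of it directly; and if instead you decompose $\partial^{\alpha}\sigma(x,\cdot)$ for each fixed $x$, apply Peetre, and take the supremum in $x$, you are left with a sum $\sum_{j}M_{r}\big(\sup_{x}|\Delta_{j}\partial^{\alpha}\sigma(x,\cdot)|\big)$ whose $L^{q,\infty}$ quasi-norm is not controlled by $\|\sup_{x}|\partial^{\alpha}\sigma|\|_{L^{q,\infty}}$, because the Littlewood--Paley pieces of $|\partial^{\alpha}\sigma(x,\cdot)|$ do not recombine into the original modulus after the sup in $x$. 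Replacing this step with the convolution-Sobolev argument of the paper is the correct fix.
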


Theorems \ref{thLp-calB} and \ref{thLp-diff-Lq} 
generalize and improve 
the result of \cite{MT-2013};  
see Subsection \ref{comments}.

As an application of Theorem \ref{thLp-diff-Lq}, 
we have the following corollary. 
This corollary 
is a generalization of \eqref{eq221-4-epsilon} 
to the case $L^{p_1}\times L^{p_2} \to L^{p}$  
with general $p_1, p_2, p$.

\begin{cor}\label{thLp-Ltildeq-GN}
Let $p_1,p_2\in [1, \infty]$ and $p\in (0, \infty]$ satisfy 
$1/p\le 1/p_1 + 1/p_2$ and let 
$q=q(p_1, p_2, p)$ be the number given in 
Proposition \ref{thellq-ellqweak-calB}. 
Assume $\sigma \in C^\infty((\R^n)^3)$
satisfies 
\begin{equation*}
m(\xi_1, \xi_2)=
\sup_{x\in \R^n}|\sigma (x, \xi_1, \xi_2)| 
\in L^{\widetilde{q}}_{\xi_1, \xi_2} (\R^{2n}) 
\end{equation*} 
for some $\widetilde{q}$ satisfying $0< \widetilde{q} < q$. 
Also assume 
\begin{equation*}
\sup_{|\alpha|\le N} 
\left\| \partial^\alpha 
\sigma \right\|_{L^\infty((\R^n)^3)}
\leq A
\end{equation*}
for a sufficiently large positive integer $N$ which 
depends only on $n, p_1, p_2, p$, and $\widetilde{q}$. 
Then the bilinear pseudo-differential operator
$T_{\sigma}$ satisfies
\[
\| T_{\sigma} \|_{L^{p_1} \times L^{p_2} \to h^p} 
\le 
c 
A^{1-\widetilde{q}/q}
\| m \|_{L^{\widetilde{q}}(\R^{2n})}^{\widetilde{q}/q}, 
\]
where 
$L^{p_1}$ (resp.\ $L^{p_2}$) can be replaced by $bmo$ 
if $p_1=\infty$ (resp.\ $p_2 = \infty$) 
and $c$ is a constant depending only 
on $n, p_1, p_2, p$, and $\widetilde{q}$. 
\end{cor}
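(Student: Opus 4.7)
The plan is to apply Theorem~\ref{thLp-diff-Lq} and then estimate each resulting $L^q$-norm via a Landau--Kolmogorov pointwise interpolation together with the hypothesis $m\le A$.

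First, Theorem~\ref{thLp-diff-Lq} furnishes a positive integer $K$ and a constant $c$, depending only on $n,p_1,p_2,p$, such that
\[
\|T_\sigma\|_{L^{p_1}\times L^{p_2}\to h^p}\le c\sum_{|\alpha|\le K}\|g_\alpha\|_{L^q_{\xi_1,\xi_2}(\R^{2n})},
\]
where $g_\alpha(\xi_1,\xi_2):=\sup_{x\in\R^n}|\partial^\alpha_{x,\xi_1,\xi_2}\sigma(x,\xi_1,\xi_2)|$. This reduces the proof to bounding $\|g_\alpha\|_{L^q}$ for each $|\alpha|\le K$.

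Next, I would fix the integer $N$ in the hypothesis large enough so that $K/N\le 1-\widetilde{q}/q$, which is possible since $\widetilde{q}<q$. The classical Landau--Kolmogorov inequality applied on unit balls in $(x,\xi_1,\xi_2)$-space, combined with $\sup_{|\beta|\le N}\|\partial^\beta\sigma\|_\infty\le A$, yields for each $|\alpha|\le K$ the pointwise bound
\[
|\partial^\alpha_{x,\xi_1,\xi_2}\sigma(x,\xi_1,\xi_2)|\le C\bigl(\sup_{B_1(x,\xi_1,\xi_2)}|\sigma|\bigr)^{1-|\alpha|/N}A^{|\alpha|/N}.
\]
Passing to the supremum in $x$ gives $g_\alpha(\xi)\le CM_1(\xi)^{1-|\alpha|/N}A^{|\alpha|/N}$, where $M_1(\xi):=\sup_{|\xi'-\xi|\le 1}m(\xi')$. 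Setting $\theta:=\widetilde{q}/q$ and using $M_1\le\|m\|_\infty\le A$ together with $|\alpha|/N\le 1-\theta$, the elementary inequality $(M_1/A)^{1-|\alpha|/N-\theta}\le 1$ upgrades this to $g_\alpha\le CA^{1-\widetilde{q}/q}M_1^{\widetilde{q}/q}$. Taking $L^q$-norms yields
\[
\|g_\alpha\|_{L^q}\le CA^{1-\widetilde{q}/q}\|M_1\|_{L^{\widetilde{q}}}^{\widetilde{q}/q}.
\]

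The remaining ingredient, which I expect to be the main obstacle, is the scale-$1$ maximal inequality $\|M_1\|_{L^{\widetilde{q}}(\R^{2n})}\le C\|m\|_{L^{\widetilde{q}}(\R^{2n})}$. For $\widetilde{q}\ge 1$ this follows from the Hardy--Littlewood maximal theorem. In the quasi-Banach range $\widetilde{q}<1$, where the classical inequality fails for general measurable $m$, one must exploit the Lipschitz regularity of $m$ inherited from $\|\nabla_\xi\sigma\|_\infty\le A$, via a level-set or covering argument; equivalently, one may discretize to a lattice maximal by setting $V(\nu):=\sup_{\xi\in\nu+[-1,1]^{2n}}m(\xi)$ and bound the resulting $\calBform$-norm by $\|V\|_{\ell^{\widetilde{q}}}$ through Theorem~\ref{thLp-calB} and Proposition~\ref{thellq-ellqweak-calB}. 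Once this maximal estimate is in place, summing over $|\alpha|\le K$ completes the proof.
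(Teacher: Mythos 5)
Your overall plan --- apply Theorem~\ref{thLp-diff-Lq} and then bound the resulting $L^q$-norms of the derivative supremum functions by a pointwise interpolation inequality --- is the same strategy the paper takes (the paper reduces exactly to the inequality~\eqref{eqGagliardo-Nirenberg}). However, the specific pointwise inequality you invoke is too weak, and this is a genuine gap, not a detail you can dismiss.

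The Landau--Kolmogorov inequality on unit balls gives a bound in terms of $\sup_{B_1(\xi)}m$, a \emph{supremum}-type maximal. That operator is not bounded on $L^{\widetilde{q}}$ for any $\widetilde q$: it is not the Hardy--Littlewood maximal function (which involves averages, not pointwise suprema), so the $\widetilde{q}\ge 1$ case does not in fact ``follow from the Hardy--Littlewood maximal theorem,'' and the smoothness hypothesis does not repair it either. A concrete obstruction: take $\sigma(\xi_1,\xi_2)=s\,\psi\!\left((\xi_1,\xi_2)/s^{1/N}\right)$ for a fixed bump $\psi\in C_c^\infty(\R^{2n})$ and let $s\to 0$. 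Then $\|\partial^\alpha\sigma\|_\infty = s^{1-|\alpha|/N}\|\partial^\alpha\psi\|_\infty \le A$ for $|\alpha|\le N$ with $A$ independent of $s$, yet $\|\sup_{B_1}m\|_{L^{\widetilde{q}}}/\|m\|_{L^{\widetilde{q}}}\approx s^{-2n/(N\widetilde{q})}\to\infty$. Feeding this through your chain of estimates yields $\|T_\sigma\|\lesssim A^{1-\widetilde{q}/q}\,s^{\widetilde{q}/q}$, which is weaker than the target bound $A^{1-\widetilde{q}/q}\|m\|_{L^{\widetilde{q}}}^{\widetilde{q}/q}\approx A^{1-\widetilde{q}/q}s^{\widetilde{q}/q+2n/(Nq)}$ by the unbounded factor $s^{-2n/(Nq)}$. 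Your suggested workarounds (Lipschitz regularity, discretizing to $V(\nu)=\sup_{\nu+[-1,1]^{2n}}m$) are subject to the same loss, since $\sum_\nu V(\nu)^{\widetilde{q}}$ is not controlled by $\int m^{\widetilde{q}}$ in exactly this regime.

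The paper resolves this in the Appendix by proving a strictly sharper pointwise interpolation, Lemma~\ref{thlogconvexMeM1}:
\[
|\partial^{\gamma}f(y)|\lesssim \left[M_\epsilon(f)(y)\right]^{(N-|\gamma|)/N}\Bigl[\sum_{|\alpha|=N}M_1(|\partial^\alpha f|)(y)\Bigr]^{|\gamma|/N},
\]
in which $M_\epsilon f=(M_1(|f|^\epsilon))^{1/\epsilon}$ is an \emph{averaged} maximal function. Because one is free to choose $\epsilon<\widetilde{q}$, $M_\epsilon$ \emph{is} bounded on $L^{\widetilde{q}}$ even in the quasi-Banach range, and then Lemma~\ref{th04} (the Gagliardo--Nirenberg inequality~\eqref{eqGagliardo-Nirenberg}) closes the argument. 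Obtaining an averaged bound in place of the sup bound requires a more careful argument than plain Landau--Kolmogorov: the paper reconstructs $\partial^{\gamma}f(y)$ from the translated point values $f(y+\lambda\beta+z)$ on a finite lattice via a Vandermonde inversion (Lemmas~\ref{thVandermonde}, \ref{thd-Vandermonde}), then \emph{averages over the translation $z$}, and it is precisely this averaging that upgrades the sup over a ball to an $L^\epsilon$-mean. This averaging step is the missing idea in your proposal.
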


\begin{rem}\label{thHoldercase}
In the particular case  
$1/p = 1/p_1 + 1/p_2$, 
the number $q=q(p_1, p_2, p)$ of 
Proposition \ref{thellq-ellqweak-calB} and 
Theorem \ref{thLp-diff-Lq} is given as follows: 
\begin{enumerate}
\renewcommand{\labelenumi}{(\theenumi)}
\item
if $1/p_1, 1/p_2\le 1/2 \le 1/p_1 + 1/p_2$, 
then $q=4$; 
\item
if $1/p_1, 1/p_2, 1/p_1 + 1/p_2 \le 1/2 $, 
then 
$1/q = (1/2)(1- 1/p_1 - 1/p_2)$; 
\item
if $1/p_1\le 1/ 2 \le 1/ p_2 $, then 
$q=2p_2$; 
\item
if $1/p_2\le 1/ 2 \le 1/ p_1 $, then 
$q=2p_1$; 
\item
if $1/ 2 \le 1/ p_1, 1/p_2 $, and 
$1/p_1+ 1/p_2 \le 3/2$, 
then 
${1}/{q}=({1}/{2}) ( {1}/{p_1}+ {1}/{p_2} -{1}/{2})$; 
\item 
if $1/ 2 \le 1/ p_1, 1/p_2 $, and 
$1/p_1+ 1/p_2 \ge 3/2$, 
then 
${1}/{q}={1}/{p_1}+ {1}/{p_2} - 1$. 
\end{enumerate}
Moreover, the cases 
(I), (II$\ast$), and (III-1$\ast$) of 
Proposition \ref{thellq-ellqweak-calB} 
correspond to the case 
$0\le  1/p_1, 1/p_2 <1$ and $0< 1/p= 1/p_1 + 1/p_2< 3/2$. 

The number $q=q(p_1,p_2,p)$ for  
$1/p = 1/p_1+1/p_2$ is assigned as follows. 

\begin{center}
\begin{tikzpicture}[scale=0.9]
   \draw [thick, -stealth](-0,0)--(7,0) node [anchor=north]{$1/p_1$};
   \draw [thick, -stealth](0,-0)--(0,7) node [anchor=east]{$1/p_2$};
   \node [anchor=east] at (0,0) {0};
   \draw [thick](3,0)--(3,6);
   \draw [thick](0,3)--(6,3);
   \draw [thick](6,0)--(6,6);
   \draw [thick](0,6)--(6,6);
   \draw [thick](0,3)--(3,0);
   \draw [thick](3,6)--(6,3);
   \node [anchor=north] at (3,0) {1/2};
   \node [anchor=east] at (0,3) {1/2};
   \node [anchor=north] at (6,0) {1};
   \node [anchor=east] at (0,6) {1};
   \node [font=\normalsize] at (2,2) {$4$};
   \node [font=\normalsize] at (1,1) {$2p'$};
   \node [font=\normalsize] at (4.5,1.5) {$2p_1$};
   \node [font=\normalsize] at (1.5,4.5) {$2p_2$};
   \draw [thick, -stealth](7,4)--(4,4) ;
   \node [font=\normalsize, anchor=west] at (7,4) 
   {$\big(({1}/{2})({1}/{p_1}+{1}/{p_2}-{1}/{2}) \big)^{-1}$};
   \draw [thick, -stealth](7,5)--(5,5) ;
   \node [font=\normalsize, anchor=west] at (7,5) {$({1}/{p_1}+{1}/{p_2}-1)^{-1}$};
\end{tikzpicture} 
\end{center}

We shall see that the number 
$q=q(p_1, p_2, p)$ for 
$1/p = 1/p_1+1/p_2$ 
is sharp (Subsection \ref{sharpness-q}). 
\end{rem}

\begin{rem}\label{thremark-lattice-bump}
Here is a result that suggests close relation 
between the Wiener amalgam space  
and 
the norm $\|\cdot \|_{\calB_{q_1,q_2,q}}$ 
through certain bilinear Fourier multiplier operators.  
Let $\Phi$ be a function in 
$C_{0}^{\infty}(\R^{2n})$ such that 
$\supp \Phi \subset 
[-1+\epsilon , 1-\epsilon ]^{2n}$ 
for some $\epsilon \in (0, 1)$ and 
$\Phi (0) \neq 0$. 
For $M\in \ell^{\infty}(\Z^n \times \Z^n)$, 
let 
\[
\sigma_{M}(\xi_1, \xi_2) 
=
\sum_{\nu_1, \nu_2 \in \Z^n} 
M(\nu_1, \nu_2) 
\Phi 
(\xi_1- \nu_1, \xi_2 - \nu_2), 
\quad \xi_1, \xi_2 \in \R^n, 
\]
and let 
$T_{\sigma_M}$ be the corresponding 
bilinear Fourier multiplier operator.  
We extend the definition of 
$\|\cdot \|_{\calB_{q_1,q_2,q}}$ 
to complex valued functions;  
for complex-valued 
$M\in \ell^{\infty}(\Z^n \times \Z^n)$
and for 
$q_1, q_2, q \in (0, \infty]$, 
we define 
\begin{equation*}
\|M\|_{\calB_{q_1,q_2,q}}
=
\sup 
\Bigg\| 
\sum_{ \nu_1,\nu_2 \in \Z^n : \nu_1+ \nu_2=\nu } 
M(\nu_1, \nu_2) A(\nu_1) B(\nu_2) 
\Bigg\|_{\ell^q_{\nu}(\Z^n)},
\end{equation*}
where 
the supremum is taken over all 
finitely supported 
$A :\Z^n \to \C$ and $B: \Z^n\to \C$ 
with 
$\|A\|_{\ell^{q_1}}=\|B\|_{\ell^{q_2}}=1$. 
Then,  
by modifying the method of the present paper, 
it is possible to prove the following:  
{\it if  
$p_1, p_2, p, q_1, q_2, q \in (0,\infty]$ 
and $1/p \le 1/p_1+1/p_2$, then  
there exists a constant $c\in (0,\infty)$ 
such that 
the inequalities 
\[
c^{-1}
\|M\|_{\calB_{q_1, q_2, q}}
\le 
\|T_{\sigma_M}\|_{W^{p_1, q_1}\times W^{p_2, q_2} \to W^{p,q}}
\le 
c
\|M\|_{\calB_{q_1, q_2, q}}
\]
hold for all 
complex-valued 
$M \in \ell^{\infty}(\Z^n \times \Z^n)$.}
Details will be discussed elsewhere.  
\end{rem}

\subsection{Some comments on the main results}
\label{comments}

We shall observe that Theorems  
\ref{thLp-calB} and \ref{thLp-diff-Lq} give 
generalizations and refinements 
of the estimate \eqref{eqbddp1p2p}--\eqref{eqcriticalorder} 
of \cite{MT-2013}. 
Here we only consider the case 
$1/p=1/p_1+1/p_2$.

We write 
\begin{equation}\label{eqcriticalweightforHormander}
V_{\ast}(\nu_1, \nu_2)
=
(1+|\nu_1|+|\nu_2|)^{m(p_1, p_2, p)}, 
\quad 
\nu_1, \nu_2 \in \Z^n, 
\end{equation}
where 
$m(p_1, p_2, p)$ is given by \eqref{eqcriticalorder}. 
Recall that 
\[
\calBform_{
\max (2, p_1^{\prime}), \max (2, p_2^{\prime}), \max (2,p)}
 (\Z^n\times \Z^n) 
\] 
is the class given in 
Theorem \ref{thLp-calB}. 
Recall also the number $q(p_1, p_2, p)$ 
mentioned in Remark \ref{thHoldercase}.

Now, if 
$0 \le 1/{p_1}, 1/{p_2} <1$ and 
$0< {1}/{p}={1}/{p_1}+ {1}/{p_2}< {3}/{2}$, 
then $m(p_1, p_2, p)=-2n/q(p_1, p_2, p)$ and 
\[
V_{\ast}
\in \ell^{q(p_1, p_2, p), \infty}_{+} (\Z^n \times \Z^n) 
\subset 
\calBform_{
\max (2, p_1^{\prime}), \max (2, p_2^{\prime}), \max (2,p)}
 (\Z^n\times \Z^n) 
\]
(see Proposition \ref{thellq-ellqweak-calB} and 
Remark \ref{thHoldercase}). 
Moreover, in Section \ref{sectionWeightclass} 
we shall show that 
the relation 
\[
V_{\ast} 
\in \calBform_{
\max (2, p_1^{\prime}), \max (2, p_2^{\prime}), \max (2,p)}
 (\Z^n\times \Z^n) 
\]
still holds 
for $p_1, p_2$ such that 
\begin{align*}
&
0\le 1/p_1 \le 1/2, 
\;\; 1/p_2=1, 
\\
&
\text{or}\quad 
0\le 1/p_2 \le 1/2, 
\;\;
1/p_1 = 1, 
\\
&
\text{or}\quad 
1/p_1=1/p_2=0, 
\\
&
\text{or}\quad 
1/2 < 1/p_1 , 1/p_2 <1,
\;\; 1/p_1 + 1/p_2 =3/2  
\end{align*}
(see Proposition \ref{th-22infty} (2) 
and 
Proposition \ref{th-p1p2q-primeprod} (2)). 
These facts mean that 
Theorems \ref{thLp-calB} and \ref{thLp-diff-Lq} 
give generalizations 
of the estimate \eqref{eqbddp1p2p}--\eqref{eqcriticalorder} 
of \cite[Theorem 1.1]{MT-2013}.

Moreover, 
Theorems \ref{thLp-calB} and \ref{thLp-diff-Lq} 
give estimates that are stronger 
than \eqref{eqbddp1p2p}. 
For example, 
Theorems \ref{thLp-calB} and \ref{thLp-diff-Lq} 
give estimates 
for $L^1\times L^{p_2} \to h^{p}$, 
$2\le p_2 \le \infty$, or 
$bmo \times bmo \to L^{\infty}$ 
in the cases where 
only weaker estimates 
$h^1\times L^{p_2} \to h^{p}$ or   
$bmo \times bmo \to bmo$ 
are claimed in \eqref{eqbddp1p2p}.

In the case $1/2 < 1/p_1,  1/p_2 \le 1$ and 
${1}/{p}={1}/{p_1}+ {1}/{p_2}> 3/2$, 
the weight function 
\eqref{eqcriticalweightforHormander} 
does not 
belong to the class 
$\calBform_{p_1^{\prime}, p_2^{\prime}, 2}$ 
of Theorem \ref{thLp-calB} 
(see Proposition \ref{thcounterexBp1p22}) 
while the slightly smaller weight 
$(1 + |\nu_1|+|\nu_2|)^{m}$ 
with $m < m(p_1,p_2,p)$ belongs to 
$\calBform_{p_1^{\prime}, p_2^{\prime}, 2}$ 
(see Proposition \ref{th-p1p2q-primeprod} (1)). 
\vs

We end this section with noting the plan of this paper.
In Section \ref{section2}, 
we will give the basic notations used throughout the paper 
and recall the definitions and properties of some function spaces.
In Section \ref{sectionWeightclass}, 
we give several properties of the classes  
$\calB_{q_1,q_2,q}$ 
and $\calBform_{q_1,q_2,q_3}$ and 
prove Proposition \ref{thellq-ellqweak-calB}. 
In Section \ref{sectionProof}, 
we prove the main theorems of the paper. 
In Appendix, we give a proof of a 
Gagliardo-Nirenberg type inequality 
that is used to derive 
Corollary \ref{thLp-Ltildeq-GN} 
from Theorem \ref{thLp-diff-Lq}.

\section{Preliminaries}\label{section2}
\subsection{Basic notations} 

Here we collect notations which will be used throughout this paper.

We denote by $\N$ and $\N_0$
the sets of positive integers 
and nonnegative integers, respectively. 
We denote by $Q$ the $n$-dimensional 
unit cube $[-1/2,1/2)^n$. 
For $x\in \R^d$, we write 
$\langle x \rangle = (1 + | x |^2)^{1/2}$. 
Thus $\langle (x,y) \rangle = (1+|x|^2+|y|^2)^{1/2}$ 
for $(x,y) \in \R^n \times \R^n$.

For two nonnegative functions $A(x)$ and $B(x)$ defined 
on a set $X$, 
we write $A(x) \lesssim B(x)$ for $x\in X$ to mean that 
there exists a positive constant $C$ such that 
$A(x) \le CB(x)$ for all $x\in X$. 
We often omit to mention the set $X$ when it is 
obviously recognized.  
Also $A(x) \approx B(x)$ means that
$A(x) \lesssim B(x)$ and $B(x) \lesssim A(x)$.

We denote the Schwartz space of rapidly 
decreasing smooth functions
on $\R^d$ 
by $\calS (\R^d)$ 
and denote its dual,
the space of tempered distributions, 
by $\calS^\prime(\R^d)$. 
The Fourier transform and the inverse 
Fourier transform of $f \in \calS(\R^d)$ are given by
\begin{align*}
\mathcal{F} f  (\xi) 
&= \widehat {f} (\xi) 
= \int_{\R^d}  e^{-i \xi \cdot x } f(x) \, dx, 
\\
\mathcal{F}^{-1} f (x) 
&= \check f (x)
= \frac{1}{(2\pi)^d} \int_{\R^d}  e^{i x \cdot \xi } f( \xi ) \, d\xi,
\end{align*}
respectively. 
For $m \in \calS^\prime (\R^d)$, 
the linear Fourier multiplier operator is defined by
\begin{equation*}
m(D) f 
=
\mathcal{F}^{-1} \left[m \cdot \mathcal{F} f \right].
\end{equation*}

For a measurable subset $E \subset \R^d$, 
the Lebesgue space $L^p (E)$, $0<p\le \infty$, 
is the set of all those 
measurable functions $f$ on $E$ such that 
$\| f \|_{L^p(E)} = 
\left( \int_{E} \big| f(x) \big|^p \, dx \right)^{1/p} 
< \infty 
$
if $0< p < \infty$ 
or   
$\| f \|_{L^\infty (E)} 
= 
\mathrm{ess}\, \sup_{x\in E} |f(x)|
< \infty$ if $p = \infty$. 
We also use the notation 
$\| f \|_{L^p(E)} = \| f(x) \|_{L^p_{x}(E)} $ 
when we want to indicate the variable explicitly.

Let $\K$ be a countable set. 
The sequence spaces 
$\ell^q (\K)$ and $\ell^{q, \infty} (\K)$ 
are defined as follows.  
The space $\ell^q (\K)$, $0 < q \le \infty$, 
consists of all those 
complex sequences $a=\{a_k\}_{k\in \K}$ 
such that 
$ \| a \|_{ \ell^q (\K)} 
= 
\left( \sum_{ k \in \K } 
| a_k |^q \right)^{ 1/q } <\infty$ 
if $0 < q < \infty$  
or 
$\| a \|_{ \ell^\infty (\K)} 
= \sup_{k \in \K} |a_k| < \infty$ 
if $q = \infty$.
For $0< q<\infty$, 
the space 
$\ell^{ q,\infty }(\K)$ is 
the set of all those complex 
sequences 
$a= \{ a_k \}_{ k \in \K }$ such that
\begin{equation*}
\| a \|_{\ell^{q,\infty} (\K) } 
=\sup_{t>0} 
\big\{ t \big( 
\card  \{ k \in \K : | a_k | > t \} 
\big)^{1/q} \big\} < \infty. 
\end{equation*}
Sometimes we write 
$\| a \|_{\ell^{q}} = 
\| a_k \|_{\ell^{q}_k }$ or 
$\| a \|_{\ell^{q,\infty}} = 
\| a_k \|_{\ell^{q,\infty}_k }$.  

We end this subsection by
noting the definition of the class $\calM(\R^d)$,
which was introduced in
\cite[Definition 3.5]{KMT-arxiv-2}
(see also \cite[Definition 3.7]{KMT-arxiv}).

\begin{defn}\label{thdefModerate}
Let $d \in \N$. 
We say that a continuous function 
$F: \R^d \to (0, \infty)$   
is of {\it moderate class\/} if  
there exist constants $C=C_{F}>0$ 
and $M=M_{F}>0$ such that 
\begin{equation*}
F(\xi + \eta ) 
\le C F(\xi)
\langle \eta \rangle ^{M}
\; \; \text{for all} \;\; \xi, \eta \in \R^d.  
\end{equation*}
We denote by 
$\calM (\R^d)$ the set of 
all functions on $\R^d$ of moderate class. 
\end{defn}

\subsection{Local Hardy spaces $h^p$ and the space $bmo$} 
\label{sechardybmo}

We recall the definitions of the local Hardy spaces 
$h^p(\R^n)$ and the space $bmo(\R^n)$.

Let $\phi \in \calS(\R^n)$ be such that
$\int_{\R^n}\phi(x)\, dx \neq 0$. 
Then the local Hardy space $h^p(\R^n)$, $0<p\leq\infty$,  
consists of
all $f \in \calS'(\R^n)$ such that 
$\|f\|_{h^p}=\|\sup_{0<t<1}|\phi_t*f|\|_{L^p}
<\infty$,  
where $\phi_t(x)=t^{-n}\phi(x/t)$.
It is known that $h^p(\R^n)$
does not depend on the choice of the function $\phi$ 
up to the equivalence of quasi-norm. 
If $1 < p \leq \infty$, 
then 
$h^p(\R^n) = L^p(\R^n)$ with equivalent norms. 
If $0 < p \le 1$, then 
the inequality 
$\|f\|_{L^p}\lesssim \|f\|_{h^p}$ 
holds for all 
$f \in h^{p}$ which are 
defined by locally integrable 
functions on $\R^n$.

The space $bmo(\R^n)$ consists of
all locally integrable functions $f$ on $\R^n$ 
such that
\[
\|f\|_{bmo}
=\sup_{|R| \le 1}\frac{1}{|R|}
\int_{R}|f(x)-f_R|\, dx
+\sup_{|R|\geq1}\frac{1}{|R|}
\int_R |f(x)|\, dx
<\infty,
\]
where $f_R=|R|^{-1}\int_R f(x) \, dx$,
and $R$ ranges over all cubes in $\R^n$. 
Obviously 
$L^{\infty} \subset bmo$.

See Goldberg \cite{goldberg 1979} for more details about 
$h^p$ and $bmo$.

\subsection{Wiener amalgam spaces} 
\label{secamalgam}

Let $\kappa\in\calS(\R^n)$ be such that $\supp \kappa$ is compact and
\begin{equation}\label{eqcondofkappa}
\bigg| \sum _{k\in \mathbb{Z}^{n}} \kappa(\xi-k) \bigg| 
\geq 1, 
\quad 
\xi \in \R^n .
\end{equation}
Then for $0 < p,q \leq \infty$,
the Wiener amalgam space $W^{p,q}=W^{p,q}(\R^n)$
is defined to be the set of
all tempered distributions $f \in \calS'(\R^n)$ such that
\begin{equation}\label{eqdefinition-WAnorm}
\|f\|_{W^{p,q}} = 
\Big\| \big\|
\kappa(D-k)f(x)
\big\|_{\ell^{q}_{k}(\Z^n)} 
\Big\|_{L^p_{x}(\R^n)}
\end{equation}
is finite. 
It is known that the definition of Wiener amalgam space is 
independent of the choice of the function $\kappa$ 
up to the equivalence of the quasi-norm 
\eqref{eqdefinition-WAnorm}.
Let us remark that
the usual definition of the Wiener amalgam space 
is given by 
$\kappa$ satisfying 
$\sum _{k\in \mathbb{Z}^{n}} \kappa(\cdot-k) \equiv 1$.
However, for technical reasons,
we shall use a bit more general $\kappa$ 
satisfying \eqref{eqcondofkappa}.

We note that $W^{p,q}$ is a quasi-Banach space 
(Banach space for $1 \leq  p,q \leq \infty$) and 
$\mathcal{S} \subset W^{p,q} \subset \mathcal{S}^\prime$. 
If $0 < p,q < \infty$, then $\mathcal{S}$ is dense in $W^{p,q}$.
See Feichtinger \cite{Fei-1981} and Triebel \cite{Tri-1983}
for more details. 
See also \cite{Fei-1983, GS-2004, Gro-book-2001, Kob-2006, WH-2007} 
for modulation spaces
which are variation of the Wiener amalgam spaces.

The following embedding results are proved in 
\cite{CKS-2015, GCFZ-2019, GWYZ-2017}.

\begin{prop}\label{thWaembd}
\begin{enumerate}\setlength{\itemsep}{3pt} 
\item
$W^{p_1,q_1} \hookrightarrow W^{p_2,q_2}$
if
$0<p_1\le p_2 \le \infty$ and $0<q_1\le q_2 \le \infty$.
\item 
If $1 \le p \le \infty$, then 
$W^{p,\min (2, p')} \hookrightarrow L^{p} 
\hookrightarrow 
W^{p,\max (2, p')} $. 
\item
If $0< p \le 2$, then 
$W^{p,2} \hookrightarrow h^{p}$. 
\item 
$bmo \hookrightarrow W^{\infty,2}$. 
\end{enumerate}
\end{prop}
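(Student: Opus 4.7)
The plan is to reduce all four embeddings to a combination of Bernstein--Nikolskii inequalities for functions with compact Fourier support and the standard square-function characterizations of $L^p$, $h^p$, and $bmo$ via unit-scale frequency decomposition. A preliminary step is to verify that $\|\cdot\|_{W^{p,q}}$ is, up to equivalence, independent of the choice of admissible $\kappa$ satisfying \eqref{eqcondofkappa}; I would do this by expanding the reciprocal of $\sum_m\widetilde{\kappa}(\cdot-m)$ as an absolutely summable Fourier series on $\supp \kappa$, so that one may write $\kappa(D-k)=\sum_j c_j\,\widetilde{\kappa}(D-k-j)$ with $\{c_j\}$ rapidly decaying. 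This lets any two admissible cutoffs be compared and reduces the problem, if convenient, to the case where $\kappa$ arises from a smooth partition of unity.

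For (1), I would fix a Schwartz $\psi$ with $\sum_\ell \psi(\cdot-\ell)\equiv 1$ and discretize the outer $L^p_x$ norm using the cubes $\ell+Q$. Since each $\kappa(D-k)f$ is Fourier-localized on a translate of a fixed compact set, Bernstein's inequality gives the uniform local estimate
\[
\|\kappa(D-k)f\|_{L^{p_2}(\ell+Q)}\lesssim \|\kappa(D-k)f\|_{L^{p_1}(\R^n)},\qquad p_1\le p_2,
\]
and this, combined with the elementary inclusion $\ell^{q_1}\hookrightarrow\ell^{q_2}$ for $q_1\le q_2$ and Minkowski's inequality, yields the embedding. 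Statement (3) follows from the classical Peetre-type characterization of $h^p$ for $0<p\le 2$, which identifies the $L^p$-norm of the discrete $\ell^2$-square function $(\sum_k|\kappa(D-k)f|^2)^{1/2}$ with $\|f\|_{h^p}$; this is precisely $\|f\|_{W^{p,2}}$. Statement (4) is the analogous description at the $L^\infty$ endpoint, proved via a unit-scale adaptation of the Fefferman square-function characterization of $bmo$ together with a John--Nirenberg argument. Finally, (2) follows by interpolation and duality from the endpoint identities $W^{2,2}=L^2$ (Plancherel), $L^\infty\hookrightarrow W^{\infty,2}$ (a consequence of (4) together with $L^\infty\hookrightarrow bmo$), and its dual counterpart $W^{1,2}\hookrightarrow L^1$, using the duality $(W^{p,q})'=W^{p',q'}$ in the reflexive range.

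The step I expect to be the most delicate is the $bmo$ endpoint (4): it rests on the unit-scale version of the Fefferman square-function characterization of $bmo$, which requires a careful localization and a John--Nirenberg argument adapted to the non-dyadic, unit-scale decomposition rather than the dyadic one. Once this is in place, the other three assertions follow by essentially routine arguments, and these assembled pieces are precisely the content of the cited references \cite{CKS-2015,GCFZ-2019,GWYZ-2017}.
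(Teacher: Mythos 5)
The paper does not give a proof of Proposition~\ref{thWaembd}: it simply records the four embeddings and cites \cite{CKS-2015, GCFZ-2019, GWYZ-2017} as the source, so your write-up is being compared not against an argument in the text but against external references. That said, your sketch has a genuine gap in part (1). The inequality you invoke,
$\|\kappa(D-k)f\|_{L^{p_2}(\ell+Q)}\lesssim \|\kappa(D-k)f\|_{L^{p_1}(\R^n)}$,
is a correct global Bernstein--Nikolskii estimate, but it loses all $\ell$-localization on the right-hand side, so it cannot be summed over $\ell$ to recover $\|f\|_{W^{p_1,q_1}}$: after applying it, every term in the $\ell$-sum is the same full-space quantity, and Minkowski plus $\ell^{q_1}\hookrightarrow\ell^{q_2}$ does not repair this. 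The standard route instead uses the Peetre maximal inequality $\sup_{y\in\ell+Q}|\kappa(D-k)f(y)|\lesssim \inf_{x\in\ell+Q}M_r(\kappa(D-k)f)(x)$ (valid for every $r>0$ because the Fourier supports have uniformly bounded diameter), followed by the discrete inequality $\sum_\ell a_\ell^{p_2}\le\big(\sum_\ell a_\ell^{p_1}\big)^{p_2/p_1}$ for $p_1\le p_2$, and finally the Fefferman--Stein vector-valued maximal inequality with $r<\min(p_1,q_1)$. This is the argument that actually handles the quasi-Banach range $p_1,q_1<1$ uniformly, which matters here since the paper applies the proposition for $p$ as small as one likes.

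A second, lesser inaccuracy is in part (3): you describe the $W^{p,2}$-norm as a Peetre-type \emph{characterization} of $h^p$ for $0<p\le 2$, i.e.\ an equivalence of norms. This is false for $p\ne 2$; the unit-scale $\ell^2$-square function controls $\|f\|_{h^p}$ from above but not from below (a modulated bump $e^{i\xi_0\cdot x}g(x)$ has $W^{p,2}$-norm independent of $\xi_0$ while its $h^p$-norm behaves differently), and the content of (3) is precisely the one-sided embedding $W^{p,2}\hookrightarrow h^p$. Since you only use the embedding direction the conclusion survives, but the stated justification should not claim an identity. Parts (2) and (4), argued by duality/interpolation from the endpoints $W^{2,2}=L^2$ and the $bmo$-characterization, are in the right spirit, although for (4) one should be careful that the admissible $\kappa$ in~\eqref{eqcondofkappa} need not be a genuine partition of unity, so the preliminary normalization step you describe (expanding $1/\sum_m\kappa(\cdot-m)$ in an absolutely convergent Fourier series on the support of $\kappa$) is indeed needed before the comparison with the usual square-function characterization of $bmo$ can be made.
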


\section{Classes $\calB_{q_1,q_2,q}$ and $\calBform_{q_1,q_2,q_3}$}
\label{sectionWeightclass}

\subsection{Some basic properties
of the classes $\calB_{q_1,q_2,q}$ and $\calBform_{q_1,q_2,q_3}$}

\begin{prop}\label{thembedBclass}
If 
$q_1,q_2,q , \widetilde{q_1} , \widetilde{q_2} , \widetilde{q} \in (0, \infty]$, 
$q_1\ge \widetilde{q_1}$, 
$q_2 \ge \widetilde{q_2}$, 
and
$q \le \widetilde{q}$, then 
$
\calB_{q_1,q_2,q} \subset 
\calB_{\widetilde{q_1} , \widetilde{q_2} , \widetilde{q}}.  
$
If $q_1,q_2,q_3 , 
\widetilde{q_1} , \widetilde{q_2} , \widetilde{q_3} 
\in [1, \infty]$, 
$q_1\ge \widetilde{q_1}$, 
$q_2 \ge \widetilde{q_2}$, 
and
$q_3 \ge \widetilde{q_3}$, then 
$
\calBform_{q_1,q_2,q_3} \subset 
\calBform_{\widetilde{q_1} , \widetilde{q_2} , \widetilde{q_3}}. 
$
\end{prop}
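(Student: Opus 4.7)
The proposal rests on a single elementary fact about sequence spaces: for any countable index set $\K$ and any $0 < p \le q \le \infty$, one has $\ell^{p}(\K) \hookrightarrow \ell^{q}(\K)$ with $\|a\|_{\ell^{q}} \le \|a\|_{\ell^{p}}$. The plan is to combine this monotonicity on the ``input'' side (the sequences $A, B, C$) and, for the $\calB$ claim, also on the ``output'' side (the $\ell^{q}_{\nu}$ norm of the convolution-type sum).

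For the $\calBform$ statement I would first observe that by homogeneity the defining supremum is unchanged if the constraint $\|A\|_{\ell^{q_1}} = \|B\|_{\ell^{q_2}} = \|C\|_{\ell^{q_3}} = 1$ is relaxed to $\|A\|_{\ell^{q_1}},\|B\|_{\ell^{q_2}},\|C\|_{\ell^{q_3}} \le 1$. Then, because $\widetilde{q_i} \le q_i$ implies $\|A\|_{\ell^{q_i}} \le \|A\|_{\ell^{\widetilde{q_i}}}$, any $A,B,C$ admissible in the $\calBform_{\widetilde{q_1},\widetilde{q_2},\widetilde{q_3}}$ supremum is also admissible (with unit ball constraints) in the $\calBform_{q_1,q_2,q_3}$ supremum. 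Taking the supremum over a smaller set gives a smaller value, so $\|V\|_{\calBform_{\widetilde{q_1},\widetilde{q_2},\widetilde{q_3}}} \le \|V\|_{\calBform_{q_1,q_2,q_3}}$, which yields the desired inclusion.

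For the $\calB$ statement the argument is similar but uses the nesting in two directions. Fix nonnegative $A,B$ with $\|A\|_{\ell^{\widetilde{q_1}}} = \|B\|_{\ell^{\widetilde{q_2}}} = 1$, and set
\[
F(\nu) = \sum_{\nu_1 + \nu_2 = \nu} V(\nu_1,\nu_2) A(\nu_1) B(\nu_2), \quad \nu \in \Z^n.
\]
From $q \le \widetilde{q}$ one gets $\|F\|_{\ell^{\widetilde{q}}} \le \|F\|_{\ell^{q}}$, and from $\widetilde{q_i} \le q_i$ one gets $\|A\|_{\ell^{q_1}}, \|B\|_{\ell^{q_2}} \le 1$. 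Applying the definition of $\|V\|_{\calB_{q_1,q_2,q}}$ to the (possibly subunit) sequences $A,B$, which is again legitimate by homogeneity, gives
\[
\|F\|_{\ell^{\widetilde{q}}} \le \|F\|_{\ell^{q}} \le \|V\|_{\calB_{q_1,q_2,q}}\, \|A\|_{\ell^{q_1}}\, \|B\|_{\ell^{q_2}} \le \|V\|_{\calB_{q_1,q_2,q}}.
\]
Taking the supremum over admissible $A,B$ yields $\|V\|_{\calB_{\widetilde{q_1},\widetilde{q_2},\widetilde{q}}} \le \|V\|_{\calB_{q_1,q_2,q}}$, as required.

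Neither step poses any real difficulty; the only minor issue is bookkeeping in the quasi-norm regime $q < 1$, where I must confirm that the embedding $\ell^{q} \hookrightarrow \ell^{\widetilde{q}}$ and the homogeneity of the $\calB$-supremum both remain valid (they do, since the inequality $\|a\|_{\ell^{q}} \le \|a\|_{\ell^{p}}$ for $p \le q$ holds for all $0 < p \le q \le \infty$, and the defining expression is positively homogeneous of degree one in each of $A$ and $B$).
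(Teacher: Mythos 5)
Your proof is correct and takes essentially the same route as the paper: the paper's one-line proof simply cites the monotonicity $\ell^{p} \hookrightarrow \ell^{q}$ for $p \le q$, and your write-up spells out exactly how that monotonicity is applied to the $A,B,C$ sequences (and, for $\calB$, also to the output $\ell^{q}_{\nu}$-norm) via the homogeneity of the defining suprema.
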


\begin{proof}
The embeddings follow from the fact
$\ell^{p} \hookrightarrow \ell^q $ for $p \le q$.
\end{proof}

\begin{prop}\label{thVast}
Let $0 < q_1,q_2,q \leq \infty$.
For any $V\in \calB_{q_1,q_2,q}(\Z^n \times \Z^n)$, 
there exists a function 
$V^{\ast}$ in the moderate class $\calM (\R^n \times \R^n)$ 
such that 
$V(\nu_1, \nu_2) \le V^{\ast} (\nu_1, \nu_2)$ 
for all $(\nu_1, \nu_2) \in \Z^n \times \Z^n$ and 
the restriction of $V^{\ast}$ to $\Z^n \times \Z^n$ 
belongs to $\calB_{q_1,q_2,q}(\Z^n \times \Z^n)$. 
\end{prop}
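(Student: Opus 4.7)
The plan is to obtain $V^{\ast}$ as a regularization of $V$ on $\R^{2n}$ by convolving it with a polynomially decaying kernel. Concretely, with $N\in\N$ to be fixed, set
\begin{equation*}
V^{\ast}(\xi_1,\xi_2)
=
\sum_{\nu_1,\nu_2\in\Z^n}
V(\nu_1,\nu_2)\,
\langle\xi_1-\nu_1\rangle^{-N}
\langle\xi_2-\nu_2\rangle^{-N},
\qquad (\xi_1,\xi_2)\in\R^{n}\times\R^{n}.
\end{equation*}

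First I would verify the elementary properties. Every $V\in\calB_{q_1,q_2,q}$ is bounded, since plugging the Dirac sequences $A=\ichi_{\{\mu_1\}}$, $B=\ichi_{\{\mu_2\}}$ into the defining inequality yields $V(\mu_1,\mu_2)\le\|V\|_{\calB_{q_1,q_2,q}}$. Hence, provided $N>n$, the series defining $V^{\ast}$ converges uniformly on compact subsets of $\R^{2n}$, so $V^{\ast}$ is continuous, and it is strictly positive whenever $V\not\equiv 0$ (the zero case is trivial). Moderateness follows from the Peetre-type bound $\langle x+y\rangle^{-N}\le C_N\langle x\rangle^{-N}\langle y\rangle^{N}$, which gives $V^{\ast}(\xi+\eta)\lesssim\langle\eta\rangle^{2N}V^{\ast}(\xi)$. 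The domination $V(\nu_1,\nu_2)\le V^{\ast}(\nu_1,\nu_2)$ at integer points comes from keeping only the diagonal term of the series.

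The main step is to prove that $V^{\ast}|_{\Z^n\times\Z^n}\in\calB_{q_1,q_2,q}$. Fix $\nu\in\Z^n$ and nonnegative sequences $A$, $B$ with $\|A\|_{\ell^{q_1}}=\|B\|_{\ell^{q_2}}=1$. The substitution $\mu_i=\nu_i-\alpha_i$ together with Fubini gives the identity
\begin{equation*}
\sum_{\nu_1+\nu_2=\nu}V^{\ast}(\nu_1,\nu_2)A(\nu_1)B(\nu_2)
=
\sum_{\alpha_1,\alpha_2\in\Z^n}
\langle\alpha_1\rangle^{-N}\langle\alpha_2\rangle^{-N}\,
G_{\alpha_1,\alpha_2}(\nu-\alpha_1-\alpha_2),
\end{equation*}
where $G_{\alpha_1,\alpha_2}(\nu')=\sum_{\mu_1+\mu_2=\nu'}V(\mu_1,\mu_2)A(\mu_1+\alpha_1)B(\mu_2+\alpha_2)$. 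Since $\|A(\cdot+\alpha_1)\|_{\ell^{q_1}}=1$ and likewise for $B$, the definition of $\|V\|_{\calB_{q_1,q_2,q}}$ yields $\|G_{\alpha_1,\alpha_2}\|_{\ell^q}\le\|V\|_{\calB_{q_1,q_2,q}}$, and by translation invariance the same bound holds for $\|G_{\alpha_1,\alpha_2}(\cdot-\alpha_1-\alpha_2)\|_{\ell^q_\nu}$.

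To finish, take the $\ell^q_\nu$ norm of the preceding identity. For $q\ge 1$, Minkowski's inequality gives a bound by $\bigl(\sum_\alpha\langle\alpha\rangle^{-N}\bigr)^{2}\|V\|_{\calB_{q_1,q_2,q}}$, finite as soon as $N>n$; for $q<1$, the $\ell^q$-quasi-triangle inequality yields $\bigl(\sum_\alpha\langle\alpha\rangle^{-Nq}\bigr)^{2/q}\|V\|_{\calB_{q_1,q_2,q}}$, finite as soon as $N>n/q$. Choosing $N>n/\min(1,q)$ therefore handles both regimes and establishes $\|V^{\ast}|_{\Z^n\times\Z^n}\|_{\calB_{q_1,q_2,q}}\lesssim\|V\|_{\calB_{q_1,q_2,q}}$. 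The only subtlety is the quasi-normed regime $q<1$, where one must invoke the $\ell^q$-quasi-triangle inequality in place of Minkowski's; otherwise the argument is a clean convolution-type bookkeeping.
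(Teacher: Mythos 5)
Your proof is correct and follows essentially the same route as the paper: both regularize $V$ by convolving against a polynomially decaying kernel, exploit the translation invariance of the $\calB_{q_1,q_2,q}$ norm, and sum up the tails with a Minkowski/quasi-triangle bound. The only cosmetic differences are your use of a product weight $\langle\cdot\rangle^{-N}\langle\cdot\rangle^{-N}$ in place of the paper's joint $\langle(\cdot,\cdot)\rangle^{-M}$, and your explicit case split $q\ge 1$ versus $q<1$ instead of the paper's unified $\min(1,q)$-power trick.
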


\begin{proof} 
The following argument is a generalization of that given in 
\cite[Proof of Proposition 3.9]{KMT-arxiv}. 
Suppose $V\in \calB_{q_1,q_2,q}(\Z^n \times \Z^n)$. 
We may assume $V \not\equiv 0$.
By the translation invariance of $\ell^q(\Z^n)$, we have
\begin{align}\label{eqBLqtranslation}
\begin{split}
&\Bigg\|
\sum_{ \nu_1,\nu_2 \in \Z^n : \nu_1+ \nu_2=\nu } 
V(\nu_1-\mu_1, \nu_2-\mu_2) 
A(\nu_1) 
B(\nu_2)
\Bigg\|_{\ell^q_{\nu}(\Z^n)}
\\&=
\Bigg\|
\sum_{ \nu_1,\nu_2 \in \Z^n : \nu_1+ \nu_2=\nu -\mu_1 - \mu_2} 
V(\nu_1, \nu_2) 
A(\nu_1+\mu_1) 
B(\nu_2 +\mu_2)
\Bigg\|_{\ell^{q}_{\nu}(\Z^n) }
\le c
\|A\|_{\ell^{q_1} (\Z^n) } 
\|B\|_{\ell^{q_2} (\Z^n) } 
\end{split}
\end{align}
for all $(\mu_1, \mu_2) \in 
\Z^n \times \Z^n$,
where 
$c=\|V\|_{\calB_{q_1,q_2,q}}$. 
Take $M> 2n/\min(1,q)$
and set
\begin{align*}
G(\nu_1, \nu_2) 
&
= \sum_{\mu_1, \mu_2 \in \Z^n} 
V(\nu_1 - \mu_1, \nu_2 - \mu_2) 
\langle  (\mu_1, \mu_2)  \rangle ^{-M} 
\\
&= \sum_{\mu_1, \mu_2 \in \Z^n} 
V(\mu_1, \mu_2) 
\langle  ( \nu_1-\mu_1, \nu_2 - \mu_2)  
\rangle ^{-M} .
\end{align*}
Then this $G$ belongs to 
$\calB_{q_1,q_2,q}(\Z^n \times \Z^n)$.
In fact, 
since
\begin{align*}
&
\Bigg\|
\sum_{ \nu_1,\nu_2 \in \Z^n : \nu_1+ \nu_2=\nu } 
G(\nu_1, \nu_2) 
A(\nu_1) 
B(\nu_2)
\Bigg\|_{\ell^{q}_{\nu}(\Z^n) }
^{\min(1,q)}
\\&\leq
\sum_{\mu_1, \mu_2 \in \Z^n} 
\langle  (\mu_1, \mu_2)  \rangle^{-M\min(1,q)} 
\Bigg\|
\sum_{ \nu_1,\nu_2 \in \Z^n : \nu_1+ \nu_2=\nu } 
V(\nu_1-\mu_1, \nu_2 - \mu_2) 
A(\nu_1) 
B(\nu_2)
\Bigg\|_{\ell^{q}_{\nu}(\Z^n) }^{\min(1,q)} ,
\end{align*} 
the inequality \eqref{eqBLqtranslation} implies 
$G \in \calB_{q_1,q_2,q}(\Z^n \times \Z^n)$. 
Obviously, $G(\nu_1, \nu_2) 
\ge V(\nu_1, \nu_2)$ since
$V$ is nonnegative.
If we define the function $V^{\ast}$ 
by
\[
V^{\ast} (\xi_1, \xi_2)
=
\sum_{\mu_1, \mu_2 \in \Z^n} 
V(\mu_1, \mu_2) 
\langle  ( \xi_1-\mu_1, \xi_2 - \mu_2)  
\rangle ^{-M}, 
\quad (\xi_1, \xi_2) \in \R^n \times \R^n,   
\]
then $V^\ast$ belongs to the moderate class $\calM(\R^n \times \R^n)$   
and 
has the desired properties. 
\end{proof}

\subsection{
Proof of Proposition \ref{thellq-ellqweak-calB}}

In this subsection, we shall prove 
 Proposition \ref{thellq-ellqweak-calB}. 
The idea of the argument here 
is the same as in 
\cite[Proposition 3.4]{KMT-arxiv}, 
where the inclusion 
$\ell^{4, \infty}_{+} \subset \calBform_{2,2,2}$ was proved. 
We use the following lemma.

\begin{lem}\label{thBrascamp-Lieb}
Let 
$q_i, r_i \in [1, \infty]$, $i=0, 1,2, 3$. 
\\
$(1)$ The inequality 
\begin{equation}\label{eqRnBL}
\begin{split}
&
\int_{\R^n \times \R^n}
V(x_1, x_2)
A(x_1)
B(x_2)
C(x_1+x_2)\, dx_1 dx_2 
\\
&\lesssim
\|V\|_{L^{q_0} (\R^{n} \times \R^n) }
\|A\|_{L^{q_1} (\R^{n}) }
\|B\|_{L^{q_2} (\R^{n}) }
\|C\|_{L^{q_3} (\R^{n}) }
\end{split}
\end{equation}
holds for all nonnegative measurable functions 
$V$ on $\R^n \times \R^n$ and 
all nonnegative measurable functions 
$A, B, C$ on 
$\R^n $ 
if and only if $(q_i)$ satisfy 
\begin{align}
& \label{eqLqeq}
{2}/{q_0}
+ {1}/{q_1}
+ {1}/{q_2}
+ {1}/{q_3} =2, 
\\
& \label{eqLqineq}
0\le {1}/{q_i} \le 1 - 
{1}/{q_0}\leq1, 
\quad i = 1, 2, 3.   
\end{align}

$(2)$ The inequality 
\begin{equation}\label{eqRnBLLorentz}
\begin{split}
&
\int_{\R^n \times \R^n}
V(x_1, x_2)
A(x_1)
B(x_2)
C(x_1+x_2)\, dx_1 dx_2 
\\
&\lesssim
\|V\|_{L^{q_0, r_0} (\R^{n} \times \R^n) }
\|A\|_{L^{q_1, r_1} (\R^{n}) }
\|B\|_{L^{q_2, r_2} (\R^{n}) }
\|C\|_{L^{q_3, r_3} (\R^{n}) }
\end{split}
\end{equation}
holds for all nonnegative measurable functions 
$V$ on $\R^n \times \R^n$ and 
all nonnegative measurable functions 
$A, B, C$ on 
$\R^n $ 
if $(q_i)$ satisfy \eqref{eqLqeq} and
\begin{equation}\label{eqLqineqstrict}
0< {1}/{q_i} < 1 - {1}/{q_0} < 1, 
\quad i = 1, 2, 3,   
\end{equation}
and if $(r_i)$ satisfy
\begin{equation}\label{eqHolder}
{1}/{r_0} +
{1}/{r_1}+ 
{1}/{r_2}+
{1}/{r_3}
\ge 1. 
\end{equation}

\end{lem}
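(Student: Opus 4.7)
The basic strategy for part (1) is to separate $V$ from the other factors by Hölder's inequality on $\R^{2n}$ and then apply the classical three-function Young convolution inequality. For sufficiency, I first apply Hölder with exponents $(q_0, q_0')$ to peel off $\|V\|_{L^{q_0}}$, reducing matters to
\begin{equation*}
\left( \int_{\R^{2n}} A(x_1)^{q_0'} B(x_2)^{q_0'} C(x_1+x_2)^{q_0'} \, dx_1 dx_2 \right)^{1/q_0'} \lesssim \|A\|_{L^{q_1}} \|B\|_{L^{q_2}} \|C\|_{L^{q_3}}.
\end{equation*}
This estimate in turn follows from the symmetric form of Young's convolution inequality
\begin{equation*}
\int_{\R^{2n}} A(x_1) B(x_2) C(x_1+x_2) \, dx_1 dx_2 \le \|A\|_{L^{p_1}} \|B\|_{L^{p_2}} \|C\|_{L^{p_3}}, \quad \textstyle\sum_i 1/p_i = 2,\ 1 \le p_i \le \infty,
\end{equation*}
applied with $p_i = q_i / q_0'$: the scaling identity (\ref{eqLqeq}) is equivalent to $\sum_i 1/p_i = 2$, and the range condition (\ref{eqLqineq}) is equivalent to $p_i \ge 1$, so all the hypotheses of Young are met.

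For necessity, the identity (\ref{eqLqeq}) comes from the standard dilation test, comparing the behaviour of both sides of (\ref{eqRnBL}) under $(V, A, B, C) \mapsto (V(\lambda\,\cdot\,), A(\lambda\,\cdot\,), B(\lambda\,\cdot\,), C(\lambda\,\cdot\,))$ as $\lambda \to 0$ and $\lambda \to \infty$. Necessity of (\ref{eqLqineq}) is obtained by testing with indicators of suitable rectangles; for example, taking $V = \ichi_{[0,R]^n \times [0,1]^n}$, $A = \ichi_{[0,R]^n}$, and $B = C = \ichi_{[0,1]^n}$ and letting $R \to 0^+$ forces $1/q_1 \le 1 - 1/q_0$. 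The analogous bounds for $q_2$ and $q_3$ follow from the obvious $(x_1, x_2)$-swap symmetry and from the change of variables $(x_1, x_2) \mapsto (x_1, x_1+x_2)$, which interchanges the roles of $B$ and $C$ while preserving the $L^{q_0}$-norm of $V$.

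For part (2), I plan to deduce the Lorentz-space estimate (\ref{eqRnBLLorentz}) from part (1) by multilinear real interpolation. Under the strict range (\ref{eqLqineqstrict}), the exponents $(q_0, q_1, q_2, q_3)$ lie in the interior of the admissible region inside the hyperplane (\ref{eqLqeq}), so several $L^q$ endpoint bounds from part (1) hold in a whole neighborhood of these exponents. Applying the standard multilinear real interpolation theorem to the $4$-linear form $(V, A, B, C) \mapsto \int V A B C \, dx_1 dx_2$ then yields boundedness from $L^{q_0, r_0} \times L^{q_1, r_1} \times L^{q_2, r_2} \times L^{q_3, r_3}$ into $\R$, with the secondary indices constrained by exactly the Hölder-type condition (\ref{eqHolder}). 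The main obstacle here will be the interpolation bookkeeping: one must choose enough endpoints on the hyperplane (\ref{eqLqeq}) so that every quadruple $(r_0, r_1, r_2, r_3)$ obeying (\ref{eqHolder}) is realized, and the strict inequality in (\ref{eqLqineqstrict}) is precisely what provides the room for this.
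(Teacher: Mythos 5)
Your proof follows essentially the same route as the paper's. The paper disposes of part (1) by citing that it is a special case of the Brascamp--Lieb inequalities and referring to an elementary proof (which is precisely the H\"older-plus-Young peeling you carry out, together with the scaling and box-function tests for necessity), and disposes of part (2) by invoking the multilinear real interpolation theorem of Janson, which is the ``standard multilinear real interpolation theorem'' your sketch relies on; the only thing you should add for part (2) is a concrete citation of Janson's theorem and the observation that one may first obtain the result with $\sum_{i} 1/r_i = 1$ and then relax to $\sum_i 1/r_i \ge 1$ by the embedding $L^{q_i, r_i}\hookrightarrow L^{q_i, \widetilde{r_i}}$ for $r_i \le \widetilde{r_i}$.
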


\begin{proof} 
The claim (1) is a special case of 
the Brascamp--Lieb inequalities.  
An elementary proof of the `if' part can be found 
in \cite[Proof of Proposition 3.4]{KMT-arxiv}. 
The claim (2) can be deduced from 
the inequalities of (1) with the aid of real interpolation 
for multilinear operators of 
Janson \cite{Janson} (see 
\cite[Proof of Proposition 3.4]{KMT-arxiv}). 
(In \cite[loc. cit.]{KMT-arxiv}, the condition 
$\sum_{i=0}^{3} r_i =1$ is given in stead of 
\eqref{eqHolder}.  
But the condition \eqref{eqHolder} is also sufficient 
since if we replace $r_i$ by larger numbers then 
we only obtain stronger conclusion by virtue of 
the embedding 
$L^{q_i, r_i}\hookrightarrow L^{q_i, \widetilde{r_i}}$ 
for $r_i \le \widetilde{r_i}$.)  
\end{proof}

\begin{proof}[Proof of Proposition \ref{thellq-ellqweak-calB}]
Suppose 
$p_1, p_2 \in [1,\infty]$ and 
$p \in (0, \infty]$ are given 
and consider the problem to find 
$q\in [1, \infty]$ that satisfies  
\eqref{eq-inclusion-aaa} or \eqref{eq-inclusion-bbb}. 
We shall use the notation $p_3$ given in 
Proposition \ref{thellq-ellqweak-calB}.  
Then $p_3 \in [1, \infty]$ and 
$\max (2, p)= \max (2, p_3^{\prime})$. 
Thus the problem is to find $q$ such that 
\begin{equation*}
{\ell^{q}_{+}} \;\; \text{or}\;\; 
{\ell^{q, \infty}_{+}} 
\subset 
\calBform_{
\max (2, p_1^{\prime}), 
\max (2, p_2^{\prime}), \max (2,p_3^{\prime})}. 
\end{equation*}
Obviously, this inclusion holds 
if there exist $q_1, q_2, q_3 \in [1, \infty]$ such that 
\begin{equation}\label{eqcondition}
{\ell^{q}_{+}} \;\; \text{or}\;\; 
{\ell^{q, \infty}_{+}} 
\subset 
\calBform_{q_1, q_2, q_3} 
\subset 
\calBform_{\max (2, p_1^{\prime}), 
\max (2, p_2^{\prime}), \max (2,p_3^{\prime}) }. 
\end{equation}
By applying the inequalities of 
Lemma 
\ref{thBrascamp-Lieb} 
to appropriate step functions, 
we see that 
the first $\subset$ of \eqref{eqcondition} for $\ell^{q}_{+}$  
holds if 
\begin{align*}
& 
{2}/{q}
+ {1}/{q_1}
+ {1}/{q_2}
+ {1}/{q_3} =2, 
\\
& 
0\le {1}/{q_\ell} \le 
1 - {1}/{q} \le 1, 
\quad 
\ell = 1, 2, 3, 
\end{align*}
and the first $\subset$ of \eqref{eqcondition} for $\ell^{q, \infty}_{+}$  
holds if 
\begin{align*}
& 
{2}/{q}
+ {1}/{q_1}
+ {1}/{q_2}
+ {1}/{q_3} =2, 
\\
& 
0< {1}/{q_\ell} < 1 - {1}/{q} < 1, 
\quad 
\ell = 1, 2, 3, 
\\
&
{1}/{q_1} + {1}/{q_2} + {1}/{q_3} \ge 1. 
\end{align*}
By Proposition \ref{thembedBclass}, 
the second $\subset$ of \eqref{eqcondition} 
holds if 
\[
q_1 \ge \max (2, p_1^{\prime}), 
\quad 
q_2 \ge \max (2, p_2^{\prime}), 
\quad 
q_3 \ge \max (2, p_3^{\prime}).  
\]
By arranging the inequalities, 
we see the following. 
Firstly,  
if $q_1, q_2, q_3$ satisfy 
\begin{align}
\tag{A-1} 
&
{1}/{q_\ell}\le 
1- \max \big\{ {1}/{2}, {1}/{p_\ell} \big\}, 
\quad \ell = 1,2,3, 
\\
&
\tag{A-2}
0\le {1}/{q_1}\le {1}/{q_2}+ {1}/{q_3}, 
\quad 
0\le {1}/{q_2}\le {1}/{q_3}+ {1}/{q_1}, 
\quad 
0\le {1}/{q_3}\le {1}/{q_1}+ {1}/{q_2},
\end{align}
then 
the inclusion 
\eqref{eq-inclusion-aaa} holds with 
\begin{equation}\label{eqqsumqi}
{1}/{q}
=
1- ({1}/{2}) \big( 
{1}/{q_1} + {1}/{q_2} + {1}/{q_3}
\big). 
\end{equation}
Secondly, if 
$q_1, q_2, q_3$ satisfy (A-1) and 
\begin{align}
\tag{A-2$\ast$} 
&
0< {1}/{q_1}<  {1}/{q_2}+ {1}/{q_3}, 
\quad 
0< {1}/{q_2} < {1}/{q_3}+ {1}/{q_1}, 
\quad 
0< {1}/{q_3} < {1}/{q_1}+ {1}/{q_2},
\\
\tag{A-3} 
&
{1}/{q_1} + {1}/{q_2} + {1}/{q_3}\ge 1,
\end{align}   
then the inclusion 
\eqref{eq-inclusion-bbb} holds for 
$q$ of 
\eqref{eqqsumqi}.

We would like to have $q$ as large as possible. 
Thus, for given $p_1, p_2, p_3\in [1, \infty]$, 
we shall find the best choice of 
$q_1, q_2, q_3$, {\it i.e.\/}, 
that satisfy the above conditions (A-1), (A-2) or 
(A-1), (A-2$\ast$), (A-3) 
and maximize (if any)  
$\sum_{\ell=1}^{3}1/q_{\ell}$. 
Since this is an elementary task, 
we shall give only the result below, which will 
complete Proof of Proposition \ref{thellq-ellqweak-calB}.

Hereafter 
$(i, j, k)$ will denotes any permutation of $(1, 2, 3)$. 

{Case (I)} ${1}/{p_1}, {1}/{p_2}, {1}/{p_3} \le {1}/{2}$. 
In this case, the best 
choice of 
$(q_\ell)$ 
is 
${1}/{q_1}={1}/{q_2}={1}/{q_3}={1}/{2}$ and 
the $q$ given by \eqref{eqqsumqi} is  
${1}/{q}= {1}/{4}$. 
In this case, this 
$(q_\ell)$ satisfy 
(A-2$\ast$) and (A-3) as well and hence $q=4$ satisfies 
\eqref{eq-inclusion-bbb}.

{Case (II)}   
${1}/{p_i}, {1}/{p_j} \le {1}/{2} \le 
{1}/{p_k}$. 
In this case, 
the best choice of $(q_\ell)$ is 
${1}/{q_i}={1}/{q_j}={1}/{2}$ and 
${1}/{q_k} = 
1- {1}/{p_k}$ 
and the corresponding $q$ is 
${1}/{q}= {1}/{(2p_k)}$. 

{Case (II$\ast$)}   
${1}/{p_i}, {1}/{p_j} \le {1}/{2} \le 
{1}/{p_k} < 1$. 
In this case, 
the $(q_\ell)$ of Case (II) satisfies 
(A-2$\ast$) and (A-3) as well and hence 
${1}/{q}= {1}/{(2p_k)}$ satisfies 
\eqref{eq-inclusion-bbb}. 

{Case (III-1)}  
${1}/{p_i}\le {1}/{2} \le {1}/{p_j}, {1}/{p_k}$ 
and 
$(1- {1}/{p_j})+ (1- {1}/{p_k}) \ge {1}/{2}$. 
In this case, 
the best choice of $(q_{\ell})$ is 
${1}/{q_i}={1}/{2}$, ${1}/{q_j}=1- {1}/{p_j}$, 
and ${1}/{q_k}=1- {1}/{p_k}$. 
The corresponding $q$ is 
${1}/{q}=({1}/{2}) \big( {1}/{p_j }+ {1}/{p_k}-{1}/{2}\big)$. 

{Case (III-1$\ast$)} 
${1}/{p_i}\le {1}/{2} \le {1}/{p_j}, {1}/{p_k}$ and 
$(1- {1}/{p_j})+ (1- {1}/{p_k}) >  {1}/{2}$. 
In this case, 
the $(q_{\ell})$ chosen in Case (III-1) 
satisfies 
(A-2$\ast$) and (A-3) as well, 
and hence the 
corresponding $q$ satisfies 
\eqref{eq-inclusion-bbb}. 

{Case (III-2)}   
${1}/{p_i}\le {1}/{2} \le {1}/{p_j}, {1}/{p_k}$ 
and 
$(1- {1}/{p_j})+ (1- {1}/{p_k}) \le  {1}/{2}$. 
In this case, 
the best choice of $(q_{\ell})$ is 
${1}/{q_i}=
(1- {1}/{p_j})+ 
(1- {1}/{p_k})$, 
${1}/{q_j}=1- {1}/{p_j}$, 
and  
${1}/{q_k}=
1- {1}/{p_k}$. 
The corresponding $q$ is 
${1}/{q}= {1}/{p_j}+ {1}/{p_k} - 1$. 

{Case (IV-1)}  
${1}/{2} \le {1}/{p_1}, {1}/{p_2}, {1}/{p_3}$ and 
\begin{align*}
&
(1- {1}/{p_1})+ 
(1- {1}/{p_2}) 
\ge 1- {1}/{p_3}, 
\\
&
(1- {1}/{p_2})+ 
(1- {1}/{p_3}) 
\ge 1- {1}/{p_1}, 
\\
&
(1- {1}/{p_3})+ 
(1- {1}/{p_1})  
\ge 1- {1}/{p_2}.  
\end{align*}
In this case, 
the best choice of $(q_{\ell})$ is 
$
{1}/{q_1}
=1- {1}/{p_1}$, 
${1}/{q_2}=
1- {1}/{p_2}$, and 
${1}/{q_3}=
1- {1}/{p_3}$. 
The corresponding $q$ is 
$
{1}/{q}=({1}/{2}) \big( {1}/{p_1 }+ {1}/{p_2} 
+ {1}/{p_3}- 1
\big)$. 

{Case (IV-1$*$)}  
${1}/{2} \le {1}/{p_1}, {1}/{p_2}, {1}/{p_3}$ and 
\begin{align*}
&
(1- {1}/{p_1})+ 
(1- {1}/{p_2}) 
> 1- {1}/{p_3}, 
\\
&
(1- {1}/{p_2})+ 
(1- {1}/{p_3}) 
> 1- {1}/{p_1}, 
\\
&
(1- {1}/{p_3})+ 
(1- {1}/{p_1}) 
> 1- {1}/{p_2}, 
\\
&
 (1- {1}/{p_1})+ 
(1- {1}/{p_2}) 
+ (1- {1}/{p_3}) 
\ge 1. 
\end{align*}
In this case, 
the $(q_{\ell})$ chosen in Case (IV-1) 
satisfies 
(A-2$\ast$) and (A-3) as well, 
and hence the 
corresponding $q$ satisfies 
\eqref{eq-inclusion-bbb}. 

{Case (IV-2)}  
${1}/{2} \le {1}/{p_1}, {1}/{p_2}, {1}/{p_3}$ 
and 
$(1- {1}/{p_i})+ 
(1- {1}/{p_j}) 
\le 
1- {1}/{p_k}$. 
In this case, the best choice of $(q_{\ell})$ 
is 
${1}/{q_i}=
1- {1}/{p_i}$, 
${1}/{q_j}=
1- {1}/{p_j}$, and 
${1}/{q_k}=
(1- {1}/{p_i})+(1- {1}/{p_j})$. 
The corresponding 
$q$ is 
${1}/{q}= {1}/{p_i}+ {1}/{p_j} - 1$. 
\end{proof}

\subsection{Further results on the class $\calBform_{q_1, q_2, q_3}$}
\label{Further}
Here we shall give several results 
concerning the class $\calBform_{q_1, q_2, q_3}$ 
that are not necessarily covered by 
Proposition \ref{thellq-ellqweak-calB}.

\begin{prop}\label{th-22pprime}
Let $1 < p <\infty$.
Then 
$
\ell^{2p,\infty}_{+} (\Z^n \times \Z^n)
\subset 
\calBform_{p^{\prime},2,2} 
\cap \calBform_{2,p^{\prime},2} 
\cap \calBform_{2,2,p^{\prime}}.
$
\end{prop}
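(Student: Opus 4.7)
My plan is to reduce the discrete trilinear sum defining $\|V\|_{\calBform_{q_1,q_2,q_3}}$ to a continuous trilinear integral on $\R^n\times\R^n$ and then invoke the Brascamp--Lieb--type inequality of Lemma \ref{thBrascamp-Lieb}(2), mirroring the step-function method already used in the proof of Proposition \ref{thellq-ellqweak-calB}. Given $V\in\ell^{2p,\infty}_{+}(\Z^n\times\Z^n)$ and nonnegative finitely supported sequences $A,B,C$ on $\Z^n$ of unit $\ell^{q_1}$-, $\ell^{q_2}$-, $\ell^{q_3}$-norm respectively, I form the step functions $\widetilde{V}(x_1,x_2)=\sum V(\nu_1,\nu_2)\ichi_Q(x_1-\nu_1)\ichi_Q(x_2-\nu_2)$, $\mathbf{A}(x)=\sum A(\nu)\ichi_Q(x-\nu)$, $\mathbf{B}$ analogously, and the \emph{enlarged} step function $\mathbf{C}(y)=\sum_\nu C(\nu)\ichi_{\nu+[-1,1]^n}(y)$. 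The enlargement is needed because when $(x_1,x_2)\in(\nu_1+Q)\times(\nu_2+Q)$ the sum $x_1+x_2$ ranges over $(\nu_1+\nu_2)+[-1,1]^n$, so this is the smallest choice guaranteeing $\mathbf{C}(x_1+x_2)\ge C(\nu_1+\nu_2)$ pointwise. The disjointness of the cubes $\nu+Q$ and the bounded overlap (multiplicity $\le 2^n$) of the doubled cubes yield $\|\widetilde{V}\|_{L^{2p,\infty}}=\|V\|_{\ell^{2p,\infty}}$, $\|\mathbf{A}\|_{L^{q_1}}=\|A\|_{\ell^{q_1}}$, $\|\mathbf{B}\|_{L^{q_2}}=\|B\|_{\ell^{q_2}}$, and $\|\mathbf{C}\|_{L^{q_3}}\lesssim\|C\|_{\ell^{q_3}}$, while integrating cube by cube majorizes the trilinear sum by $\int_{\R^{2n}}\widetilde{V}\,\mathbf{A}(x_1)\,\mathbf{B}(x_2)\,\mathbf{C}(x_1+x_2)\,dx_1dx_2$.

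I then apply Lemma \ref{thBrascamp-Lieb}(2) with $q_0=2p$, $r_0=\infty$, and $r_\ell=q_\ell$ for $\ell=1,2,3$. The balance identity \eqref{eqLqeq} reads $\tfrac{1}{p}+\tfrac{1}{q_1}+\tfrac{1}{q_2}+\tfrac{1}{q_3}=2$, which holds in each of the three cases $(q_1,q_2,q_3)\in\{(p',2,2),(2,p',2),(2,2,p')\}$ thanks to $\tfrac{1}{p}+\tfrac{1}{p'}=\tfrac{1}{2}+\tfrac{1}{2}=1$. The strict inequalities \eqref{eqLqineqstrict}, namely $\tfrac{1}{q_\ell}<1-\tfrac{1}{2p}$, hold for $q_\ell=2$ precisely because $p>1$, and for $q_\ell=p'$ trivially since $\tfrac{1}{2p}<\tfrac{1}{p}$. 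Condition \eqref{eqHolder} reduces to $\tfrac{1}{q_1}+\tfrac{1}{q_2}+\tfrac{1}{q_3}\ge 1$, which is immediate since two of the three summands equal $\tfrac{1}{2}$.

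The main technical point will be the careful construction of $\mathbf{C}$ on the doubled cubes: a naive choice supported on unit cubes would fail to dominate $C(\nu_1+\nu_2)$ pointwise, so the doubled-cube construction together with the control of its overlap factor is the crucial ingredient that bridges the discrete constraint $\nu=\nu_1+\nu_2$ and the continuous constraint $y=x_1+x_2$. Beyond this, no substantive obstacle is expected, as the hypothesis $1<p<\infty$ exactly matches the strict inequality required in \eqref{eqLqineqstrict} and the three target classes are handled uniformly by the same choice of exponents.
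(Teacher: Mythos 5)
Your proposal is correct and follows essentially the same route as the paper: the paper proves this by applying the Lorentz-space Brascamp--Lieb inequality \eqref{eqRnBLLorentz} with exactly your exponent choices ($q_0=2p$, $r_0=\infty$, $q_1=p'$, $q_2=q_3=2$ with $r_\ell=q_\ell$ for the first class, the symmetric choice for the second, and $q_1=q_2=2$, $q_3=p'$ with $r_\ell=q_\ell$ for the third), delegating the discrete-to-continuous reduction via step functions to the earlier proof of Proposition \ref{thellq-ellqweak-calB}. Your write-up is merely more explicit about the doubled-cube construction for $\mathbf{C}$; one small inaccuracy is that the overlap multiplicity of $\{\nu+[-1,1]^n\}$ is bounded by $3^n$ on the boundary lattice (and is $2^n$ only almost everywhere), but this is a measure-zero issue that does not affect the $\|\mathbf{C}\|_{L^{q_3}}\lesssim\|C\|_{\ell^{q_3}}$ estimate.
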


\begin{proof}
The inclusion $\ell^{2p,\infty}_{+} (\Z^n \times \Z^n)
\subset 
\calBform_{p^{\prime},2,2} $ 
follows 
from the inequality \eqref{eqRnBLLorentz} with 
$q_0=2p$, 
$q_1=p^{\prime}$, 
$q_2=q_3=2$, 
$r_0=\infty$, 
$r_1=p^{\prime}$, 
and 
$r_2=r_3=2$.
The assertion for the class $\calBform_{2,p',2}$
follows by symmetry. 
The inclusion $\ell^{2p,\infty}_{+} (\Z^n \times \Z^n)
\subset 
\calBform_{2, 2, p^{\prime}} $ 
follows 
from the inequality \eqref{eqRnBLLorentz} with 
$q_0=2p$, 
$q_1=q_2=2$, 
$q_3=p^{\prime}$, 
$r_0=\infty$, 
$r_1=r_2=2$, 
and 
$r_3=p^{\prime}$.
\end{proof}

\begin{prop}\label{th-22infty}
The following 
functions on $\Z^n \times \Z^n$ 
belong to 
$
\calBform_{2,2,\infty} \cap 
\calBform_{2,\infty,2} \cap 
\calBform_{\infty,2,2}.
$
\begin{enumerate}\setlength{\itemsep}{3pt} 
\item
$V_1(\nu_1,\nu_2) = \prod_{k=1}^{n}(1+|\nu_{1,k}|+|\nu_{2,k}|)^{-1}$, 
$\nu_j=(\nu_{j,1}, \cdots, \nu_{j,n}) \in \Z^n$, $j=1,2$.
\item
$V(\nu_1,\nu_2) = \langle (\nu_1,\nu_2) \rangle^{-n}$, 
$\nu_1, \nu_2 \in \Z^n$.
\item
Any nonnegative function in 
${\ell^{2}(\Z^n \times \Z^n)}$. 
\end{enumerate}
\end{prop}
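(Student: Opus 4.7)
The plan is to prove all three claims by directly estimating the bilinear form
\[
S_V(A,B,C) = \sum_{\nu_1,\nu_2 \in \Z^n} V(\nu_1,\nu_2) A(\nu_1) B(\nu_2) C(\nu_1+\nu_2),
\]
exploiting the fact that in each of $\calBform_{\infty,2,2}$, $\calBform_{2,\infty,2}$, $\calBform_{2,2,\infty}$ one of the three sequences has $\ell^\infty$-norm equal to $1$ and so may be replaced pointwise by $1$. The remaining expression is then controlled either by Cauchy--Schwarz (if $V\in\ell^2$) or by Schur's test (when $V$ is only in a weaker class).

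For (3), given $V\in\ell^2_+(\Z^n\times\Z^n)$, one drops the $\ell^\infty$ factor in each case and applies Cauchy--Schwarz in the pair $(\nu_1,\nu_2)$; the translation invariance of $\ell^2$ then gives $S_V(A,B,C)\le\|V\|_{\ell^2}$ times the product of the two relevant $\ell^2$-norms. For (1), the product structure $V_1(\nu_1,\nu_2)=\prod_{k=1}^n v(\nu_{1,k},\nu_{2,k})$ with $v(a,b)=(1+|a|+|b|)^{-1}$ reduces each case, by tensorization, to an operator bound on $\ell^2(\Z)$. In the $\calBform_{2,2,\infty}$ case, after dominating $C\le 1$, this amounts to $\ell^2(\Z)$-boundedness of the operator with kernel $v(a,b)$ (a discrete Hilbert-type inequality), which I would verify by Schur's test with weight $w(a)=\langle a\rangle^{-1/2}$. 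In the $\calBform_{2,\infty,2}$ case, dominating $B\le 1$ and substituting $\mu=\nu_1+\nu_2$ yields a kernel that again tensorizes, and I would apply Schur's test to the one-dimensional kernel $(1+|a|+|b-a|)^{-1}$ on $\ell^2(\Z)$ with weight $\langle a\rangle^{-\alpha}$ for some fixed $0<\alpha<1/2$. The case $\calBform_{\infty,2,2}$ then follows from the symmetry $V_1(\nu_1,\nu_2)=V_1(\nu_2,\nu_1)$.

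For (2), the weight $V(\nu_1,\nu_2)=\langle(\nu_1,\nu_2)\rangle^{-n}$ has critical decay (it is not in $\ell^2$) and does not factor across coordinates, so neither of the arguments above is directly available. I would instead apply Schur's test on $\Z^n\times\Z^n$ for each of the three cases, using the weight $w(\nu)=\langle\nu\rangle^{-n/2}$. After dominating the $\ell^\infty$ factor, the kernels are $\langle(\nu_1,\nu_2)\rangle^{-n}$ (for $\calBform_{2,2,\infty}$) or, after the substitution $\mu=\nu_1+\nu_2$, $\langle(\mu-\nu_2,\nu_2)\rangle^{-n}$ (for $\calBform_{\infty,2,2}$). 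The required row and column sums are estimated by splitting into the two regions where one variable dominates the other and comparing discrete sums to integrals on $\R^n$; each region contributes the desired $\langle\cdot\rangle^{-n/2}$ decay. The symmetry of $V$ again identifies $\calBform_{2,\infty,2}$ with $\calBform_{\infty,2,2}$.

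The main technical delicacy throughout is the choice of weight in Schur's test: in both (1) and (2) the unweighted test fails (row sums for the one-dimensional kernels of $V_1$ diverge logarithmically, and the two-dimensional Schur sum for $V$ fails to give the sharp decay), so the power-decay weights $\langle\cdot\rangle^{-\alpha}$ and $\langle\cdot\rangle^{-n/2}$ must be tuned to exactly the right exponents to balance the two halves of Schur's inequality.
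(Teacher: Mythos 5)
Your proof is correct, and it establishes all three claims by a route that is systematically different from the paper's. The paper proves (1) for the class $\calBform_{2,2,\infty}$ by invoking the classical discrete Hilbert inequality (citing Hardy--Littlewood--P\'olya) coordinate by coordinate; you instead reprove the underlying $\ell^2(\Z)$ bound for the kernel $(1+|a|+|b|)^{-1}$ by Schur's test with weight $\langle a\rangle^{-1/2}$. Both are fine, but you then do a second, separate Schur computation for the $\calBform_{2,\infty,2}$ case with the shifted kernel $(1+|a|+|b-a|)^{-1}$, whereas the paper observes that $V_1(\nu_1,\nu_2-\nu_1)\approx V_1(\nu_1,\nu_2)$ and so this case reduces, after the change of variables, to the one already proved. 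That comparability (which boils down to $1+|a|+|b-a|\approx 1+|a|+|b|$) would also have let you reuse your first Schur computation verbatim. For (2) the paper uses the one-line observation $V(\nu_1,\nu_2)\lesssim V_1(\nu_1,\nu_2)$ and cites (1); your independent Schur argument with weight $\langle\nu\rangle^{-n/2}$ on $\Z^n$ is correct and in fact self-contained, but it is considerably more work than necessary and skips the chance to exploit the comparison with (1). For (3) your Cauchy--Schwarz argument matches the paper's. In summary: the paper's proof is shorter, leaning on the Hilbert inequality and the two domination/change-of-variables observations, while yours is a uniform Schur's-test approach that avoids any external citation at the cost of several extra computations; both give the result.
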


\begin{proof}(1)
First, we shall prove the claim 
$V_1 \in \calBform_{2,2,\infty}$, 
which is equivalent to the inequality 
\begin{equation}\label{eqB22infty}
\sum_{\nu_1,\nu_2 \in \Z^n} 
V_1 (\nu_1, \nu_2) 
A(\nu_1) 
B(\nu_2)
\le c
\|A\|_{\ell^{2}(\Z^n)} 
\|B\|_{\ell^{2}(\Z^n)}. 
\end{equation}
In the one dimensional case, 
this inequality is   
known as the Hilbert inequality
(see, {\it e.g.}, \cite[Chapter IX, Theorem 315]{HLP}
or \cite[Appendix A.3]{grafakos 2014m}). 
In the $n$ dimensional case, we obtain 
\eqref{eqB22infty} by using the Hilbert inequality $n$ times. 
Next, 
the claim 
$V_1 \in \calBform_{2,\infty ,2}$  
is equivalent to the inequality 
\begin{equation}\label{eqB2infty2}
\sum_{\nu_1,\nu_2\in \Z^n} 
V_1 (\nu_1, \nu_2) 
A(\nu_1) 
C(\nu_1+\nu_2)
\le 
c 
\|A\|_{\ell^{2}}
\|C\|_{\ell^{2}}. 
\end{equation}
However, by a simple change of variables 
and by the fact 
$
V_1 (\nu_1, \nu_2 - \nu_1) 
\approx 
V_1 (\nu_1, \nu_2) 
$, 
the inequality 
\eqref{eqB2infty2} is 
equivalent to \eqref{eqB22infty}, 
which is already proved. 
Finally, by symmetry 
we also have 
$V_1 \in \calBform_{\infty,2, 2}$. 


(2)
Since 
$V(\nu_1,\nu_2) \lesssim V_{1} (\nu_1,\nu_2)$,
the desired result follows from (1).

(3)
It is sufficient to prove 
the inequalities \eqref{eqB22infty} and \eqref{eqB2infty2} 
with $V_1$ replaced by 
$W\in \ell^2_{+} (\Z^n \times \Z^n)$. 
But both inequalities 
with $c=\|W\|_{\ell^2}$ 
immediately follow by an application of the Cauchy--Schwarz inequality. 
\end{proof}

\begin{prop}\label{th-p1p2q-prime}
Let $1 \le p_1,p_2 \le 2$ and 
${1}/{q}={1}/{p_1}+{1}/{p_2}-1$. 
Then 
$\ell^{q}_{+}(\Z^n \times \Z^n)
\subset  \calBform_{p_1^{\prime},p_2^{\prime},q^{\prime}}$. 
If in addition $1/p_1+1/p_2 \ge 3/2$,
then 
$\ell^{q}_{+}(\Z^n \times \Z^n)$ 
belong to $\calBform_{p_1',p_2',2} (\Z^n \times \Z^n)$.
\end{prop}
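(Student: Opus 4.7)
My plan is to deduce the first inclusion from the pointwise sum estimate
\begin{equation*}
\sum_{\nu_1, \nu_2 \in \Z^n}
V(\nu_1, \nu_2) A(\nu_1) B(\nu_2) C(\nu_1 + \nu_2)
\le
\|V\|_{\ell^q}
\|A\|_{\ell^{p_1^{\prime}}}
\|B\|_{\ell^{p_2^{\prime}}}
\|C\|_{\ell^{q^{\prime}}},
\end{equation*}
valid for all nonnegative $V, A, B, C$, by means of two applications of Hölder's inequality. The exponent arithmetic works out because
\begin{equation*}
\frac{1}{q} + \frac{1}{p_1^{\prime}} + \frac{1}{p_2^{\prime}}
= \Bigl(\frac{1}{p_1} + \frac{1}{p_2} - 1\Bigr) + \Bigl(1 - \frac{1}{p_1}\Bigr) + \Bigl(1 - \frac{1}{p_2}\Bigr) = 1,
\end{equation*}
and the hypothesis $p_1, p_2 \in [1,2]$ forces $1 \le 1/p_1 + 1/p_2 \le 2$, so $q \in [1, \infty]$ and all Hölder exponents in play lie in $[1, \infty]$.

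The heart of the argument is to group the sum by $\nu_3 := \nu_1 + \nu_2$, writing the inner sum as $\sum_{\nu_1} V(\nu_1, \nu_3 - \nu_1) A(\nu_1) B(\nu_3 - \nu_1)$, and to apply the three-factor Hölder inequality in $\nu_1$ with exponents $q, p_1^{\prime}, p_2^{\prime}$. This yields
\begin{equation*}
\sum_{\nu_1+\nu_2=\nu_3}
V(\nu_1,\nu_2) A(\nu_1) B(\nu_2)
\le F(\nu_3)\, \|A\|_{\ell^{p_1^{\prime}}} \|B\|_{\ell^{p_2^{\prime}}},
\quad
F(\nu_3) := \Bigl(\sum_{\nu_1 \in \Z^n} V(\nu_1, \nu_3-\nu_1)^q\Bigr)^{1/q},
\end{equation*}
with the obvious modifications when $q \in \{1,\infty\}$. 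A second Hölder in $\nu_3$ with conjugate exponents $q, q^{\prime}$ then produces $\|F\|_{\ell^q(\Z^n)} \|C\|_{\ell^{q^{\prime}}}$, and the linear change of variables $(\nu_1, \nu_3) \mapsto (\nu_1, \nu_3 - \nu_1)$, which is a bijection on $\Z^n \times \Z^n$, identifies $\|F\|_{\ell^q(\Z^n)}$ with $\|V\|_{\ell^q(\Z^n \times \Z^n)}$. Combining the two steps gives the claimed estimate with constant $1$.

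For the supplementary assertion, note that $1/p_1 + 1/p_2 \ge 3/2$ is equivalent to $1/q \ge 1/2$, hence $q^{\prime} \ge 2$. The monotonicity statement in Proposition \ref{thembedBclass} then gives the embedding $\calBform_{p_1^{\prime}, p_2^{\prime}, q^{\prime}} \subset \calBform_{p_1^{\prime}, p_2^{\prime}, 2}$, from which $\ell^q_{+} \subset \calBform_{p_1^{\prime}, p_2^{\prime}, 2}$ follows immediately. Because the entire argument reduces to Hölder duality, no analytic obstacle arises; the only minor point requiring care is the uniform treatment of the endpoint values $p_j \in \{1,2\}$ and $q \in \{1,\infty\}$, where one interprets the $\ell^\infty$ factor as a supremum in the standard way.
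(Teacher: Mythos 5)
Your proof is correct. The paper obtains the first inclusion by appealing to Lemma \ref{thBrascamp-Lieb}(1) (the discrete Brascamp--Lieb inequality \eqref{eqRnBL}) with $q_0=q$, $q_1=p_1'$, $q_2=p_2'$, $q_3=q'$, after checking the scaling conditions \eqref{eqLqeq}--\eqref{eqLqineq}; the second assertion then follows from Proposition \ref{thembedBclass} exactly as you say. What you have done is unpack that black box: fixing $\nu_3=\nu_1+\nu_2$ and applying three-factor H\"older in $\nu_1$ with exponents $(q,p_1',p_2')$, then H\"older in $\nu_3$ with exponents $(q,q')$, together with the measure-preserving bijection $(\nu_1,\nu_3)\mapsto(\nu_1,\nu_3-\nu_1)$ that identifies $\|F\|_{\ell^q(\Z^n)}$ with $\|V\|_{\ell^q(\Z^n\times\Z^n)}$. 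This is precisely the elementary mechanism behind the cited Brascamp--Lieb inequality in this special case, so the two arguments are the same at heart; your version has the minor advantages of being self-contained, of avoiding the need to verify the side constraint \eqref{eqLqineq} (which is automatic here, as your exponent check shows), and of exhibiting the sharp constant $1$ in the $\calBform$-norm bound. Your handling of the endpoints $q\in\{1,\infty\}$, $p_j\in\{1,2\}$ is also correct under the usual $\ell^\infty$-as-supremum convention.
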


\begin{proof}
The former assertion follows from 
the inequality 
\eqref{eqRnBL} with 
$q_0=q$,
$q_1=p_1'$, 
$q_2=p_2'$,
and $q_3=q^{\prime}$. 
The latter assertion follows from the former one 
since $\calBform_{p_1^{\prime},p_2^{\prime},q^{\prime}}
\subset \calBform_{p_1',p_2',2} $ 
for $q^{\prime} \ge 2$. 
\end{proof}

Recall that, in the region 
$1\le p_1, p_2 \le 2$, 
the weight function corresponding to the 
critical bilinear H\"ormander class 
$BS^{m(p_1, p_2, p)}_{0,0}(\R^n)$ 
is 
$(1+|\nu_1|+|\nu_2|)^{-n(1/p_1 + 1/p_2 -1/2)}$ 
(see \eqref{eqcriticalorder}). 
The next proposition 
shows that this weight 
does not belong 
to the class $\calBform_{p_1',p_2',2}$ 
if $1/p_1+1/p_2 > 3/2$.

\begin{prop}\label{thcounterexBp1p22}
Let 
$1 \leq p_1,p_2 < 2$ and
$1/p_1+1/p_2 > 3/2$ 
and let 
${1}/{q}=({1}/{2}) ({1}/{p_1}+{1}/{p_2}-{1}/{2} )$.
Then 
$(1+|\nu_1|+|\nu_2|)^{-2n/q} 
\not\in 
\calBform_{p_1',p_2',2}(\Z^n \times \Z^n)$.
\end{prop}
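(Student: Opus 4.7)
The plan is to construct, for arbitrarily large $R$, nonnegative finitely-supported test sequences $A, B, C$ on $\Z^n$ with $\|A\|_{\ell^{p_1^{\prime}}}, \|B\|_{\ell^{p_2^{\prime}}}, \|C\|_{\ell^2}$ all finite, for which the ratio
\[
\frac{\sum_{\nu_1, \nu_2 \in \Z^n}(1+|\nu_1|+|\nu_2|)^{-2n/q}\, A(\nu_1) B(\nu_2) C(\nu_1+\nu_2)}{\|A\|_{\ell^{p_1^{\prime}}} \|B\|_{\ell^{p_2^{\prime}}} \|C\|_{\ell^2}}
\]
tends to infinity as $R\to\infty$. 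Writing $2n/q = n\alpha$ with $\alpha := 1/p_1 + 1/p_2 - 1/2 \in (1, 3/2]$, I take
\[
A(\nu_1) := (1+|\nu_1|)^{-n/p_1^{\prime}}\, \ichi_{\{|\nu_1|\le R\}}(\nu_1), \qquad B(\nu_2) := (1+|\nu_2|)^{-n/p_2^{\prime}}\, \ichi_{\{|\nu_2|\le R\}}(\nu_2),
\]
with the convention $(1+|\nu|)^{-n/\infty}=1$ when $p_i=1$. A direct integral comparison gives $\|A\|_{\ell^{p_1^{\prime}}}^{p_1^{\prime}} \approx \sum_{|\nu_1|\le R}(1+|\nu_1|)^{-n}\approx \log R$ for $p_1>1$ (and $\|A\|_{\ell^{\infty}}=1$ when $p_1=1$), so $\|A\|_{\ell^{p_1^{\prime}}} \approx (\log R)^{1/p_1^{\prime}}$ with the reading $(\log R)^{0}=1$; similarly $\|B\|_{\ell^{p_2^{\prime}}} \approx (\log R)^{1/p_2^{\prime}}$.

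The core calculation is the two-sided estimate
\[
F(\mu) \;:=\; \sum_{\nu_1+\nu_2=\mu}(1+|\nu_1|+|\nu_2|)^{-n\alpha}\, A(\nu_1) B(\nu_2) \;\approx\; (1+|\mu|)^{-n/2} \qquad (|\mu|\le R/2).
\]
For the upper bound I drop the support constraint and split the $\nu_1$-sum into the regions $|\nu_1|\le |\mu|/2$, $|\nu_1|,|\mu-\nu_1|\sim |\mu|$, and $|\nu_1|\ge 2|\mu|$; in each region a direct size estimate combined with the identity $\alpha = 1/p_1+1/p_2-1/2$ forces the total exponent of $|\mu|$ to collapse to $-n/2$. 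For the matching lower bound I restrict to $|\nu_1|\le |\mu|/2$, where $|\mu-\nu_1|\approx |\mu|$ gives $B(\mu-\nu_1)\approx |\mu|^{-n/p_2^{\prime}}$ and the weight is $\approx |\mu|^{-n\alpha}$, while the residual sum $\sum_{|\nu_1|\le |\mu|/2}(1+|\nu_1|)^{-n/p_1^{\prime}}\approx |\mu|^{n/p_1}$; combining these and applying the same identity yields $F(\mu)\gtrsim |\mu|^{-n/p_2^{\prime}-n\alpha+n/p_1}=|\mu|^{-n/2}$.

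Setting $C := F/\|F\|_{\ell^2}$ makes $\|C\|_{\ell^2}=1$, and the numerator of the displayed ratio equals $\|F\|_{\ell^2}$. From the pointwise estimate I will deduce $\|F\|_{\ell^2}^2 \approx \sum_{|\mu|\le R/2}(1+|\mu|)^{-n} \approx \log R$, hence $\|F\|_{\ell^2} \approx (\log R)^{1/2}$. Dividing by $\|A\|_{\ell^{p_1^{\prime}}}\|B\|_{\ell^{p_2^{\prime}}} \approx (\log R)^{1/p_1^{\prime}+1/p_2^{\prime}} = (\log R)^{2-1/p_1-1/p_2}$ then gives a ratio of order $(\log R)^{1/p_1+1/p_2-3/2}$, which tends to infinity under the hypothesis $1/p_1+1/p_2>3/2$, proving that the weight does not lie in $\calBform_{p_1^{\prime},p_2^{\prime},2}(\Z^n \times \Z^n)$. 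The main obstacle is proving the matching upper and lower bounds in the pointwise estimate for $F(\mu)$; in particular, the endpoint cases $p_1=1$ or $p_2=1$ (where $1/p_i^{\prime}=0$) must be handled so that the residual sum $\sum_{|\nu|\le |\mu|/2}(1+|\nu|)^{-n/p_i^{\prime}}$ still produces the expected power $|\mu|^{n/p_i}$ of $|\mu|$.
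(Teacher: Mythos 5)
The proposal is correct but takes a genuinely different route from the paper. The paper (crediting an idea from Komori--Furuya) plugs in a fixed triple of unbounded test functions of the form $|x|^{-n/p'}(\log|x|)^{-1/(\alpha p')}\ichi_{\{|x|\ge 10\}}$ with $\alpha=1/p_1'+1/p_2'+1/2$, verifies that each lies in the corresponding $\ell^r$ space, and then shows the trilinear sum diverges; this requires separate (though parallel) treatments of the interior case $1<p_1,p_2<2$, the two boundary cases with exactly one $p_i=1$, and the corner $p_1=p_2=1$. You instead use truncated power functions at scale $R$, compute the fibered sum $F(\mu)=\sum_{\nu_1+\nu_2=\mu}W(\nu_1,\nu_2)A(\nu_1)B(\nu_2)\approx(1+|\mu|)^{-n/2}$ for $|\mu|\le R/2$, dualize by taking $C=F/\|F\|_{\ell^2}$ so the ratio becomes $\|F\|_{\ell^2}/(\|A\|_{\ell^{p_1'}}\|B\|_{\ell^{p_2'}})$, and obtain growth $(\log R)^{1/p_1+1/p_2-3/2}\to\infty$. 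This is more uniform: with the convention $(\log R)^{1/p_i'}=1$ when $p_i=1$, all cases are handled by one computation, and it yields a quantitative logarithmic rate. Your flagged obstacle is genuine but resolves cleanly: when $p_i=1$ the exponent $n/p_i'$ vanishes, the residual sum is $\approx M^n=M^{n/p_i}$, and the norm is exactly $1$, so the computation degenerates gracefully. One small remark: the full two-sided bound $\|F\|_{\ell^2}\approx(\log R)^{1/2}$ needs the upper bound $F(\mu)\lesssim(1+|\mu|)^{-n/2}$ for all $\mu$ together with $F\equiv 0$ for $|\mu|>2R$ (from the support truncation), though in fact only the lower bound $\|F\|_{\ell^2}\gtrsim(\log R)^{1/2}$ is strictly needed for the conclusion, so the upper-bound splitting could be skipped entirely.
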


\begin{proof}
The idea of this proof comes from \cite[Section 4]{Furuya-2017}.
We write 
$V(\nu_1,\nu_2)=(1+|\nu_1|+|\nu_2|)^{-2n/q}$ 
and prove that 
$V \not\in \calBform_{p_1',p_2',2}(\Z^n \times \Z^n)$.

We first consider the case $1<p_1,p_2 < 2$.
What we have to prove is that the inequality 
\begin{equation}\label{eqcontiBL}
\sum_{\nu_1, \nu_2 \in \Z^n}
V(\nu_1, \nu_2) 
A(\nu_1) 
B(\nu_2)
C(\nu_1+\nu_{2})
\lesssim
\|A\|_{\ell^{p_1'} (\Z^n) } 
\|B\|_{\ell^{p_2'} (\Z^n) } 
\|C\|_{\ell^{2} (\Z^n) } .
\end{equation}
does not hold. 
For this, consider the 
functions 
\begin{align*}
A(x)&=|x|^{-n/p_1'} \, \big(\log|x| \big)^{-1/\alpha p_1'} \ichi_{|x|\geq10}(x), \\
B(x)&=|x|^{-n/p_2'} \, \big(\log|x| \big)^{-1/\alpha p_2'} \ichi_{|x|\geq10}(x), \\
C(x)&=|x|^{-n/2} \, \big(\log|x| \big)^{-1/2\alpha } \ichi_{|x|\geq10}(x),
\end{align*}
where $\alpha = 1/{p_1'}+1/{p_2'}+1/{2}$.
Notice that $0<\alpha<1$ and hence 
$A \in L^{p_1'}$,
$B \in L^{p_2'}$,
and $C \in L^{2}$. 
Hence the right hand side of 
\eqref{eqcontiBL} is finite. 
We shall see that 
the left hand side 
of \eqref{eqcontiBL}, 
which we shall write $I$, 
is infinity. 
We have  
\begin{align*}
I\approx 
\int_{\R^{2n}} \frac{A(\nu_1)B(\nu_2)C(\nu_1+\nu_2)}
{(|\nu_1|+|\nu_2|)^{ n({1}/{p_1}+{1}/{p_2}-{1}/{2} )}} 
\,d\nu_1d\nu_2
&=
\int_{\R^{2n}} \frac{A(\nu_1)B(\nu_2)C(\nu_1+\nu_2)}
{(|\nu_1|+|\nu_2|)^{ 2n-\alpha n}} 
\,d\nu_1d\nu_2. 
\end{align*}
Since 
$1 = {1}/{(\alpha p_1')}+{1}/{(\alpha p_2')}+{1}/{(2\alpha)}$
and since 
\[
\log|\nu_1| ,\; \log|\nu_2| ,\; \log|\nu_1+\nu_2| \le
\log\big(|\nu_1|+|\nu_2| \big)
\]
for $|\nu_1|,|\nu_2|, |\nu_1 + \nu_2| \ge 10$, 
we have
\begin{align*}
I \gtrsim   
\int_{|\nu_1|,|\nu_2|,|\nu_1+\nu_2| \ge 10}
\frac{1}{ (|\nu_1|+|\nu_2|)^{2n} \log\big(|\nu_1|+|\nu_2| \big) } 
\,d\nu_1d\nu_2.
\end{align*}
Let 
\begin{align*}
D &:= \left\{ (\nu_1,\nu_2)\in\R^{2n} : \; |\nu_1|,|\nu_2| 
\ge 10, \; \; \nu_{1,j},\nu_{2,j} \geq 0, \;\; 
j=1,\cdots n \right\}
\\& \quad\subset
\left\{ (\nu_1,\nu_2)\in\R^{2n} : \; 
|\nu_1|,|\nu_2|,|\nu_1+\nu_2| \ge 10 \right\}, 
\end{align*}
where 
$\nu_i=(\nu_{i,1},\cdots,\nu_{i,n})\in\R^n$. 
Then 
\begin{align*}
I \gtrsim  
\int_{(\nu_1,\nu_2)\in D}
\frac{1}{ (|\nu_1|+|\nu_2|)^{2n} \log\big(|\nu_1|+|\nu_2| \big) } 
\,d\nu_1d\nu_2.
\end{align*}
By using the spherical coordinates for $\nu_1,\nu_2$,
we obtain
\begin{align*}
I
&
\gtrsim 
\int_{10}^{\infty} \int_{10}^{\infty}
\frac{r^{n-1}\,R^{n-1}}{ (r+R)^{2n} \log\big(r+R\big) } 
\,drdR
\\
&
\ge  
\int_{10}^{\infty} \left( \int_{R}^{2R}
\frac{r^{n-1}\,R^{n-1}}{ (r+R)^{2n} \log\big(r+R\big) } 
\,dr \right) dR
\approx
\int_{10}^{\infty} \frac{1}{ R \log R } 
\,dR
=\infty.
\end{align*}
Thus the inequality \eqref{eqcontiBL} does not hold. 
This completes the proof for the case 
$1<p_1,p_2 < 2$.

We next consider the case 
$1<p_1<2$ and $p_2=1$. 
We shall prove that
$V(\nu_1, \nu_2) \not\in 
\calBform_{p_1',\infty,2}(\Z^n \times \Z^n)$, 
or equivalently 
that 
the inequality 
\begin{align*}
\sum_{\nu_1, \nu_2 \in \Z^n} 
V(\nu_1, \nu_2) 
A(\nu_1) 
C(\nu_1+\nu_{2})
\le c
\|A\|_{\ell^{p_1'}} 
\|C\|_{\ell^{2}} 
\end{align*}
does not hold. 
Since 
$(1+|\nu_1|+|\nu_2|) \approx (1+|\nu_1|+|\nu_1+\nu_2|)$, 
the above inequality 
is equivalent to 
\begin{align}\label{eqp_2=1}
\sum_{\nu_1, \nu_2 \in \Z^n} 
\frac{A(\nu_1) C(\nu_{2})}
{(1+|\nu_1|+|\nu_2|)^{n(1/p_1+1/2)}}
\lesssim
\|A\|_{\ell^{p_1'}} 
\|C\|_{\ell^{2}} .
\end{align}
We shall prove that \eqref{eqp_2=1} does not hold. 
For this, consider the functions 
\begin{align*}
A(x)&=|x|^{-n/p_1'} \, 
\big(\log|x| \big)^{-1/\alpha p_1'} \ichi_{|x|\geq10}(x), 
\\
C(x)&=|x|^{-n/2} \, 
\big(\log|x| \big)^{-1/2\alpha } \ichi_{|x|\geq10}(x).
\end{align*}
where $\alpha = 1/{p_1'}+1/{2}$.
Note that $0<\alpha<1$.
Since these are the same functions as used in the previous case,
repeating the same argument as above,
we see that 
the right hand side of \eqref{eqp_2=1} is finite
but the left hand side is infinity.
This proves the case $1<p_1<2$ and $p_2=1$.

The case $p_1=1$ and $1<p_2<2$ 
follows from symmetry.

Finally, consider the case $p_1=p_2=1$. 
We shall prove 
$V(\nu_1, \nu_2)=(1+|\nu_1|+|\nu_2|)^{-3n/2} \not\in 
\calBform_{\infty,\infty,2}(\Z^n \times \Z^n)$.
By the fact
$(1+|\nu_1|+|\nu_2|) \approx (1+|\nu_1|+|\nu_1+\nu_2|)$,
it is sufficient to show that 
the inequality 
\begin{equation}\label{eqp1=p2=1}
\sum_{\nu_1, \nu_2 \in \Z^n} 
\frac{C(\nu_{2})}{(1+|\nu_1|+|\nu_2|)^{3n/2}}
\lesssim
\|C\|_{\ell^{2}} 
\end{equation}
does not hold. 
Here, since
\[
\sum_{\nu_1 \in \Z^n} 
\frac{1}{(1+|\nu_1|+|\nu_2|)^{3n/2}}
\approx 
\frac{1}{(1+|\nu_2|)^{n/2}}, 
\]
the inequality \eqref{eqp1=p2=1} 
is further equivalent to
\begin{align*}
\sum_{\nu_2 \in \Z^n} 
\frac{C(\nu_{2})}{(1+|\nu_2|)^{n/2}}
\lesssim
\|C\|_{\ell^{2}} .
\end{align*}
However, the last inequality does not hold since 
$(1+|\nu_2|)^{-n/2} \not\in \ell^{2}(\Z^n)$. 
Now the proof of Proposition \ref{thcounterexBp1p22} is complete.
\end{proof}

\subsection{Weight of product form}

In this subsection, 
we consider weight functions 
of the form 
$ (f_1 \otimes f_2) (\nu_1, \nu_2)=f_1(\nu_1)f_2(\nu_2)$, 
where 
$f_1$ and $f_2$ are nonnegativew functions on $\Z^n$. 
We show inequalities of the form 
\[
\| f_1 \otimes f_2 \|_{\calBform_{q_1,q_2,q_3}}
\lesssim 
\| f_1 \|_{\ell^{r_1,\infty}}
\| f_2 \|_{\ell^{r_2,\infty}}
\quad\text{or}\quad
\| f_1 \otimes f_2 \|_{\calBform_{q_1,q_2,q_3} }
\lesssim 
\| f_1 \|_{\ell^{r_1}}
\| f_2 \|_{\ell^{r_2}},  
\]
which will give some weight functions that are not covered 
by Proposition \ref{thellq-ellqweak-calB} nor by  
the propositions of Subsection \ref{Further}. 
The argument below is a generalization of the one 
given in \cite[Proposition 3.3 or Remark 3.6]{KMT-arxiv},
where 
the case 
$q_1=q_2=q=2$ was treated.

Our argument is based on the following lemma. 

\begin{lem}\label{thBrascamp-Lieb-prod}
Let 
$\widetilde{q_1}, 
\widetilde{q_2}, 
q_1, q_2, q_3, 
s_1, s_2, t_1, t_2, t_3 \in [1, \infty]$. 
Then the following hold. 
\\
$(1)$ The inequality 
\begin{equation}\label{eqRnBL-prod}
\begin{split}
&
\int_{\R^n \times \R^n}
f_1 (x_1) f_2 (x_2)
A(x_1)
B(x_2)
C(x_1+x_2)\, dx_1 dx_2 
\\
&\lesssim
\|f_1\|_{L^{\widetilde{q_1}} (\R^{n} ) }
\|f_1\|_{L^{\widetilde{q_2}} (\R^{n} ) }
\|A\|_{L^{q_1} (\R^{n}) }
\|B\|_{L^{q_2} (\R^{n}) }
\|C\|_{L^{q_3} (\R^{n}) }
\end{split}
\end{equation}
holds for all nonnegative measurable functions 
on $\R^n$ 
if  
\begin{align}
& 
{1}/{\widetilde{q_1}} + {1}/{\widetilde{q_2}} 
+ {1}/{q_1}
+ {1}/{q_2}
+ {1}/{q_3} =2, 
\label{eqLqeq-prod}
\\
& 
0\le {1}/{\widetilde{q_i}}+ {1}/{q_i} \le 1, 
\quad i = 1, 2. 
\nonumber
\end{align}  
\\
$(2)$ The inequality 
\begin{equation}\label{eqRnBLLorentz-prod}
\begin{split}
&
\int_{\R^n \times \R^n}
f_1(x_1) 
f_2 (x_2) 
A(x_1)
B(x_2)
C(x_1+x_2)\, dx_1 dx_2 
\\
&
\lesssim
\|f_1\|_{L^{\widetilde{q_1}, s_1} (\R^{n} ) }
\|f_2\|_{L^{\widetilde{q_2}, s_2} (\R^{n} ) }
\|A\|_{L^{q_1, t_1} (\R^{n}) }
\|B\|_{L^{q_2, t_2} (\R^{n}) }
\|C\|_{L^{q_3, t_3} (\R^{n}) }
\end{split}
\end{equation}
holds for all nonnegative measurable functions 
$\R^n$ 
if $(\widetilde{q_i})$ and $(q_i)$ satisfy \eqref{eqLqeq-prod} and
\begin{align*}
&
0< {1}/{\widetilde{q_1}},  {1}/{\widetilde{q_2}}, 
{1}/{q_1}, {1}/{q_2}, {1}/{q_3} <1, 
\\
&0< {1}/{\widetilde{q_i}}+ {1}/{q_i} < 1, 
\quad i = 1, 2,  
\end{align*}
and if $(s_i)$ and $(t_i)$ satisfy
\begin{equation*}\label{eqHolder2}
{1}/{s_1} + {1}/{s_2}
+
{1}/{t_1}+ 
{1}/{t_2}+
{1}/{t_3}
\ge 1. 
\end{equation*}
$(3)$ The inequality 
\begin{equation}\label{eqRnBLLorentz-prod-2}
\begin{split}
&
\int_{\R^n \times \R^n}
f_1(x_1) 
f_2 (x_2) 
A(x_1)
C(x_1+x_2)\, dx_1 dx_2 
\\
&
\lesssim
\|f_1\|_{L^{\widetilde{q_1}, s_1} (\R^{n} ) }
\|f_2\|_{L^{\widetilde{q_2}, s_2} (\R^{n} ) }
\|A\|_{L^{q_1, t_1} (\R^{n}) }
\|C\|_{L^{q_3, t_3} (\R^{n}) }
\end{split}
\end{equation}
holds for all nonnegative measurable functions 
$\R^n$ 
if $(\widetilde{q_i})$ and $(q_i)$ satisfy 
\begin{align*}
&
1/\widetilde{q_1} 
+ 1/\widetilde{q_2}
+1/q_1 + 1/q_3 =2, 
\\
&
0< {1}/{\widetilde{q_1}},  {1}/{\widetilde{q_2}}, 
{1}/{q_1}, {1}/{q_3} <1, 
\\
&0< {1}/{\widetilde{q_1}}+ {1}/{q_1} < 1, 
\end{align*}
and if $(s_i)$ and $(t_i)$ satisfy 
\begin{equation*}
{1}/{s_1} + {1}/{s_2}
+
{1}/{t_1}+ 
{1}/{t_3}
\ge 1. 
\end{equation*}
\end{lem}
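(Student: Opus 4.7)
The plan is to reduce each part to the corresponding part of the already-proved Lemma \ref{thBrascamp-Lieb} by absorbing the weights $f_1, f_2$ into the factors $A, B$ via H\"older's inequality, and then to invoke the unweighted ($V \equiv 1$) form of Brascamp--Lieb, which is simply Young's convolution inequality.

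For part (1), I would set $\widetilde{A}(x_1) = f_1(x_1) A(x_1)$ and $\widetilde{B}(x_2) = f_2(x_2) B(x_2)$. Classical H\"older gives
\begin{equation*}
\|\widetilde{A}\|_{L^{r_1}} \le \|f_1\|_{L^{\widetilde{q_1}}} \|A\|_{L^{q_1}}, \qquad \tfrac{1}{r_1}=\tfrac{1}{\widetilde{q_1}}+\tfrac{1}{q_1},
\end{equation*}
and similarly $\|\widetilde{B}\|_{L^{r_2}} \le \|f_2\|_{L^{\widetilde{q_2}}} \|B\|_{L^{q_2}}$ with $1/r_2 = 1/\widetilde{q_2}+1/q_2$. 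The conditions $0 \le 1/\widetilde{q_i}+1/q_i \le 1$ guarantee $r_i \in [1,\infty]$. The integral in \eqref{eqRnBL-prod} then becomes $\int \widetilde{A}(x_1)\widetilde{B}(x_2) C(x_1+x_2)\,dx_1 dx_2$, which is the case $V\equiv 1$, $q_0 = \infty$ of Lemma \ref{thBrascamp-Lieb}(1); the corresponding exponent relation $1/r_1+1/r_2+1/q_3 = 2$ is precisely the condition \eqref{eqLqeq-prod} after substitution.

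For part (2), I would run exactly the same three-step chain (H\"older on $\widetilde{A}$, H\"older on $\widetilde{B}$, Young on $\widetilde{A} \ast \widetilde{B}'$, and finally Lorentz H\"older against $C$ to obtain the integral) but using the Lorentz-space versions of H\"older and Young due to O'Neil. These require the primary exponents to satisfy $1 < r_1, r_2, q_3' < \infty$, which is ensured by the strict inequalities on $1/\widetilde{q_i}, 1/q_i, 1/q_3$ in the hypothesis. In each O'Neil step the secondary indices combine additively ($1/v \le 1/s + 1/t$), and one final pairing with $C$ via Lorentz H\"older for integrals ($\int fg \le \|f\|_{L^{p,s}}\|g\|_{L^{p',s'}}$ when $1/s+1/s' \ge 1$) leaves the cumulative constraint $1/s_1+1/s_2+1/t_1+1/t_2+1/t_3 \ge 1$. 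Alternatively, once part (1) is in hand, part (2) can be obtained by Janson's multilinear real interpolation \cite{Janson} as done in the derivation of Lemma \ref{thBrascamp-Lieb}(2) from (1). Part (3) is a shortened version in which $f_2$ itself plays the role of $\widetilde{B}$: after forming $\widetilde{A} = f_1 A$ by Lorentz H\"older, we apply O'Neil's convolution inequality to $\widetilde{A} \ast f_2$ and close with Lorentz H\"older against $C$; the stated exponent conditions are exactly what this chain produces.

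The main obstacle is purely bookkeeping: verifying that all primary exponents encountered in the chain lie in the non-endpoint interval $(1,\infty)$ where O'Neil's inequalities apply, and tracking how the secondary Lorentz indices accumulate additively so that the stated condition on $\sum 1/s_i + \sum 1/t_j$ is sufficient. Once the openness of the primary exponents is checked from the strict inequalities in (2) and (3), the remainder is essentially the same arithmetic as in the proof of Lemma \ref{thBrascamp-Lieb}.
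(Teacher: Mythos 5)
Your proposal is correct and largely self-contained, whereas the paper's proof is a two-line citation: it refers parts (1) and (2) to \cite[Remark 3.6]{KMT-arxiv} and deduces (3) from the case $q_2=\infty$ of (1) by Janson's real interpolation for multilinear operators. For (1), your H\"older absorption $\widetilde{A}=f_1A$, $\widetilde{B}=f_2B$ followed by the $q_0=\infty$ (i.e.\ $V\equiv 1$, Young) case of Lemma \ref{thBrascamp-Lieb}(1) is the right reduction; the substitution $1/r_i = 1/\widetilde{q_i}+1/q_i$ turns the Young condition $1/r_1+1/r_2+1/q_3=2$ with $0\le 1/r_i\le 1$ into exactly \eqref{eqLqeq-prod} and the constraints $0\le 1/\widetilde{q_i}+1/q_i\le 1$. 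For (2) and (3) you offer two routes: Janson interpolation (which is what the paper uses for (3), and presumably also what the cited remark uses for (2)), or a direct O'Neil chain (Lorentz H\"older for products to form $\widetilde{A},\widetilde{B}$, then the Lorentz convolution inequality, then Lorentz H\"older duality against $C$). The O'Neil chain is a genuinely different and more transparent derivation: the strict inequalities in the hypotheses guarantee all intermediate primary exponents lie in $(1,\infty)$, where O'Neil's estimates hold, and the secondary indices combine additively at each step, which produces the accumulated constraint $1/s_1+1/s_2+1/t_1+1/t_2+1/t_3\ge 1$ (resp.\ $1/s_1+1/s_2+1/t_1+1/t_3\ge 1$ for (3)) exactly as stated. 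The interpolation route buys you reuse of machinery already set up for Lemma \ref{thBrascamp-Lieb}(2); the direct O'Neil route buys you an explicit, checkable chain with no appeal to multilinear interpolation and makes the role of each secondary index visible.
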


\begin{proof}
The claims (1) and (2) are given in 
\cite[Remark 3.6]{KMT-arxiv}. 
The claim (3) can be deduced from 
the case $q_2=\infty$ of (1) by the use of 
real interpolation for multilinear operators. 
See also the remarks given at the last part of 
Proof of Lemma \ref{thBrascamp-Lieb}. 
\end{proof}

\begin{prop}\label{th-22pprimeprod}
Let $1 < p < \infty$.
Then the following hold.
\begin{enumerate}\setlength{\itemsep}{3pt} 
\item
Let
$p<r_1<\infty$,
$2<r_2<\infty$, 
$1/r_1 + 1/r_2 = 1/p$, 
$f_1 \in \ell^{r_1, \infty}_{+}(\Z^n)$, and
$f_2 \in \ell^{r_2, \infty}_{+}(\Z^n)$. 
Then 
$f_1 (\nu_1) f_2 (\nu_2)$
belongs to
$\calBform_{p',2,2}(\Z^n\times \Z^n)$.
\item
Let
$2<r_1<\infty$, 
$p<r_2<\infty$, 
$1/r_1 + 1/r_2 = 1/p$, 
$f_1 \in \ell^{r_1, \infty}_{+}(\Z^n)$, and
$f_2 \in \ell^{r_2, \infty}_{+}(\Z^n)$. 
Then $f_1 (\nu_1) f_2 (\nu_2)$
belongs to
$\calBform_{2,p',2}(\Z^n\times \Z^n)$.
\item
Let
$2<r_1, r_2 < \infty$, 
$1/r_1 + 1/r_2 = 1/p$, 
$f_1 \in \ell^{r_1, \infty}_{+}(\Z^n)$, and
$f_2 \in \ell^{r_2, \infty}_{+}(\Z^n)$. 
Then 
$f_1 (\nu_1) f_2 (\nu_2)$
belongs to
$\calBform_{2,2,p^{\prime}}(\Z^n\times \Z^n)$.
\end{enumerate}
\end{prop}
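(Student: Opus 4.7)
The plan is to reduce all three assertions directly to the continuous Brascamp--Lieb--Lorentz inequalities in Lemma \ref{thBrascamp-Lieb-prod}~(2), via the standard discretization that replaces sequences by step functions. Given $A, B, C : \Z^n \to [0,\infty)$ and $f_1, f_2 : \Z^n \to [0,\infty)$, set
\[
\widetilde{A}(x) = \sum_{\nu \in \Z^n} A(\nu)\, \ichi_{Q}(x-\nu),
\qquad \widetilde{B}, \widetilde{f_1}, \widetilde{f_2} \text{ similarly,}
\]
and, to accommodate the argument $\nu_1 + \nu_2$, set $\widehat{C}(y) = \sum_{\nu} C(\nu)\, \ichi_{2Q}(y-\nu)$, where $2Q = [-1,1)^n$. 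Then $\|\widetilde{A}\|_{L^{q}} = \|A\|_{\ell^q}$, $\|\widetilde{f_i}\|_{L^{r_i,\infty}} = \|f_i\|_{\ell^{r_i,\infty}}$, and $\|\widehat{C}\|_{L^{q}} \lesssim \|C\|_{\ell^q}$ (with bounded overlap), and analogous bounds hold in Lorentz norms. For $(x_1,x_2) \in (\nu_1+Q) \times (\nu_2+Q)$ one has $x_1+x_2 \in (\nu_1+\nu_2) + 2Q$, so $\widehat{C}(x_1+x_2) \ge C(\nu_1+\nu_2)$, which yields
\[
\sum_{\nu_1,\nu_2} f_1(\nu_1) f_2(\nu_2) A(\nu_1) B(\nu_2) C(\nu_1+\nu_2) \le \int_{\R^n \times \R^n} \widetilde{f_1}(x_1) \widetilde{f_2}(x_2) \widetilde{A}(x_1) \widetilde{B}(x_2) \widehat{C}(x_1+x_2)\, dx_1 dx_2.
\]

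For part (1), I would apply Lemma \ref{thBrascamp-Lieb-prod}~(2) with $\widetilde{q_1} = r_1$, $\widetilde{q_2} = r_2$, $q_1 = p'$, $q_2 = q_3 = 2$ and Lorentz parameters $s_1 = s_2 = \infty$, $t_1 = p'$, $t_2 = t_3 = 2$. The scaling identity is
\[
\tfrac{1}{r_1} + \tfrac{1}{r_2} + \tfrac{1}{p'} + \tfrac{1}{2} + \tfrac{1}{2} = \tfrac{1}{p} + 1 - \tfrac{1}{p} + 1 = 2,
\]
the sharp constraints $\tfrac{1}{r_1} + \tfrac{1}{p'} < 1$ and $\tfrac{1}{r_2} + \tfrac{1}{2} < 1$ amount exactly to $r_1 > p$ and $r_2 > 2$, and the Lorentz condition $\tfrac{1}{s_1} + \tfrac{1}{s_2} + \tfrac{1}{t_1} + \tfrac{1}{t_2} + \tfrac{1}{t_3} = \tfrac{1}{p'} + 1 \ge 1$ is automatic. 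Part (2) follows from part (1) by the symmetry that exchanges the roles of $(f_1,A,r_1)$ and $(f_2,B,r_2)$. For part (3), I would apply Lemma \ref{thBrascamp-Lieb-prod}~(2) with $\widetilde{q_1} = r_1$, $\widetilde{q_2} = r_2$, $q_1 = q_2 = 2$, $q_3 = p'$, and $s_1 = s_2 = \infty$, $t_1 = t_2 = 2$, $t_3 = p'$; the identity $\tfrac{1}{r_1} + \tfrac{1}{r_2} + \tfrac{1}{2} + \tfrac{1}{2} + \tfrac{1}{p'} = 2$ again reduces to $\tfrac{1}{r_1} + \tfrac{1}{r_2} = \tfrac{1}{p}$, the strict inequalities $\tfrac{1}{r_i} + \tfrac{1}{2} < 1$ are $r_i > 2$, and the Lorentz condition is satisfied.

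The argument is essentially a parameter bookkeeping exercise once Lemma \ref{thBrascamp-Lieb-prod} is in hand; the only genuinely technical point is the discretization step, where one must pass from $C(\nu_1+\nu_2)$ to a step function on $\R^n$ so that the sum is dominated by a continuous integral of convolution type. Since the cubes $\nu+2Q$ have bounded overlap, the $L^q$ and $L^{q,\infty}$ norms of $\widehat{C}$ are controlled by the corresponding $\ell^q$ and $\ell^{q,\infty}$ norms of $C$, which is what the method needs. I expect this overlap/discretization bookkeeping to be the main, but still routine, obstacle; the Brascamp--Lieb inputs themselves are stated in precisely the form needed.
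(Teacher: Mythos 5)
Your proof is correct and takes essentially the same route as the paper: all three parts are reduced to Lemma \ref{thBrascamp-Lieb-prod}~(2) with exactly the parameter choices the authors use, and part (2) is obtained from part (1) by symmetry. The only difference is that you spell out the step-function discretization (including the bounded-overlap control of $\widehat{C}$), which the paper leaves implicit; that elaboration is accurate and harmless.
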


\begin{proof}
The claim (1) follows from the inequality 
\eqref{eqRnBLLorentz-prod}
with 
$\widetilde{q_1}=r_1$, 
$\widetilde{q_2}=r_2$, 
$q_1=p^{\prime}$, 
$q_2=q_3=2$, 
$s_1=s_2=\infty$, 
$t_1 = p^{\prime}$, 
and
$t_2=t_3=2$. 
The claim (2) follows from (1) by symmetry. 
The claim (3) follows from the inequality 
\eqref{eqRnBLLorentz-prod} with 
$\widetilde{q_1}=r_1$, 
$\widetilde{q_2}=r_2$, 
$q_1=q_2=2$, 
$q_3=p'$,
$s_1=s_2=\infty$,
$t_1=t_2=2$, and
$t_3=p^{\prime}$.  
\end{proof}


\begin{prop}\label{th-22inftyprod}
\begin{enumerate}\setlength{\itemsep}{3pt} 
\item
Let
$1< r_2 < 2 < r_1 < \infty$, 
$1/r_1 + 1/r_2 = 1$, 
$f_1 \in \ell^{r_1, \infty}_{+}(\Z^n)$, and
$f_2 \in \ell^{r_2, \infty}_{+}(\Z^n)$.
Then 
$f_1 (\nu_1) f_2 (\nu_2)$
belongs to
$\calBform_{2,\infty,2}(\Z^n\times \Z^n)$.
\item
Let
$1< r_1 < 2 < r_2 < \infty$, 
$1/r_1 + 1/r_2 = 1$, 
$f_1 \in \ell^{r_1, \infty}_{+}(\Z^n)$, and
$f_2 \in \ell^{r_2, \infty}_{+}(\Z^n)$.
Then 
$f_1 (\nu_1) f_2 (\nu_2)$
belongs to
$\calBform_{\infty,2,2}(\Z^n\times \Z^n)$.
\item
Let
$1< r_2 < 2 < r_1 < \infty$, 
$1/r_1 + 1/r_2 = 1$, 
$f_1 \in \ell^{r_1, \infty}_{+}(\Z^n)$, 
$f_2 \in \ell^{r_2, \infty}_{+}(\Z^n)$. 
Then 
$f_1 (\nu_1) f_2 (\nu_1+\nu_2)$
belongs to
$\calBform_{2,2,\infty}(\Z^n\times \Z^n)$.
\item
Let
$1< r_1 < 2 < r_2 < \infty$, 
$1/r_1 + 1/r_2 = 1$, 
$f_1 \in \ell^{r_1, \infty}_{+}(\Z^n)$, and 
$f_2 \in \ell^{r_2, \infty}_{+}(\Z^n)$.
Then 
$f_1 (\nu_1+\nu_2) f_2 (\nu_2)$
belongs to
$\calBform_{2,2,\infty}(\Z^n\times \Z^n)$. 
\end{enumerate}
\end{prop}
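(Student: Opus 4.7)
All four claims will follow from Lemma~\ref{thBrascamp-Lieb-prod}(3), combined with the standard step-function reduction that transfers the integral inequality \eqref{eqRnBLLorentz-prod-2} on $\R^n\times\R^n$ to a corresponding sum inequality on $\Z^n\times\Z^n$ with all Lorentz quasi-norms preserved up to constants. In every case the factor $B(\nu_2)$ or $C(\nu_1+\nu_2)$ that is paired with the $\ell^{\infty}$-norm is bounded by $1$ and discarded, leaving a four-fold expression matching the format of \eqref{eqRnBLLorentz-prod-2}.

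For part (1), since $\|B\|_{\ell^\infty}=1$ the supremand defining $\|f_1\otimes f_2\|_{\calBform_{2,\infty,2}}$ is dominated by $\sum f_1(\nu_1)f_2(\nu_2)A(\nu_1)C(\nu_1+\nu_2)$. Apply Lemma~\ref{thBrascamp-Lieb-prod}(3) with $(\widetilde{q_1},\widetilde{q_2},q_1,q_3)=(r_1,r_2,2,2)$ and $(s_1,s_2,t_1,t_3)=(\infty,\infty,2,2)$: the balance $1/r_1+1/r_2+1/2+1/2=2$ is precisely $1/r_1+1/r_2=1$, the strictness $1/r_1+1/2<1$ amounts to $r_1>2$, and the Lorentz condition $0+0+1/2+1/2\ge 1$ holds with equality. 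This yields the required bound. Part (2) follows from part (1) by interchanging the roles of $\nu_1$ and $\nu_2$ (and of $f_1,f_2$).

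For part (3), with $\|C\|_{\ell^\infty}=1$ the supremand is dominated by $\sum f_1(\nu_1)f_2(\nu_1+\nu_2)A(\nu_1)B(\nu_2)$, which again fits the shape of \eqref{eqRnBLLorentz-prod-2}: identify the lemma's quadruple $(f_1,f_2,A,C)$ with our $(f_1,B,A,f_2)$, and take exponents $(\widetilde{q_1},\widetilde{q_2},q_1,q_3)=(r_1,2,2,r_2)$ with Lorentz indices $(s_1,s_2,t_1,t_3)=(\infty,2,2,\infty)$. The balance $1/r_1+1/2+1/2+1/r_2=2$, the strictness $1/r_1+1/2<1$ (equivalent to $r_1>2$), and the Lorentz condition $0+1/2+1/2+0=1\ge 1$ are all satisfied. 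Part (4) is symmetric: the swap $\nu_1\leftrightarrow\nu_2$ turns the weight $f_1(\nu_1+\nu_2)f_2(\nu_2)$ into $f_2(\nu_1)f_1(\nu_1+\nu_2)$, which is exactly the weight of part (3) after relabeling $(f_1,f_2,r_1,r_2)$ as $(f_2,f_1,r_2,r_1)$; the hypothesis $1<r_1<2<r_2<\infty$ translates to the hypothesis of part (3) under this relabeling.

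The main obstacle is purely organizational: in parts (3) and (4) one must recognize that although the weight $f_1(\nu_1+\nu_2)$ (resp.\ $f_2(\nu_1+\nu_2)$) is evaluated at the ``diagonal'' variable, it occupies precisely the slot of the function $C$ in Lemma~\ref{thBrascamp-Lieb-prod}(3), so no genuine change of variables is needed once the $\ell^{\infty}$ factor has been discarded. After this matching is set up, the arithmetic of the balance, strictness and Lorentz conditions is routine, and the strict inequalities $r_1>2$ (or $r_2>2$) are needed only to ensure the condition $1/\widetilde{q_1}+1/q_1<1$ in Lemma~\ref{thBrascamp-Lieb-prod}(3).
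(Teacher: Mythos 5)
Your proof is correct and follows essentially the same route as the paper: both rest on the Lorentz-space Brascamp--Lieb inequality \eqref{eqRnBLLorentz-prod-2} (Lemma \ref{thBrascamp-Lieb-prod}(3)), and your exponent choices for parts (1) and (2) coincide with those the paper uses. The only cosmetic difference is in (3) and (4): the paper obtains them from (1) and (2) by a change of variables, while you reapply the lemma directly with a shifted identification of the four slots $(f_1,f_2,A,C)$ --- the two routes are arithmetically equivalent, and all the balance, strictness, and Lorentz conditions you check are verified correctly.
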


\begin{proof}
The claim (1) follows from the inequality 
\eqref{eqRnBLLorentz-prod-2} with 
$\widetilde{q_1}=r_1$, 
$\widetilde{q_2}=r_2$, 
$q_1=q_3=2$, 
$s_1=s_2=\infty$, 
and 
$t_1=t_3=2$.  
The claim (2) follows from (1) by symmetry. 
The claims (3) and (4) follow from (1) and (2) 
by a simple change of variables. 
\end{proof}

\begin{prop}\label{th-p1p2q-primeprod}
\begin{enumerate}
\item
Let
$1 \leq p_1,p_2 \leq \infty$, 
$p_1 \leq r_1 \leq \infty$,
$p_2 \leq r_2 \leq \infty$, and
${1}/{r_1}+{1}/{r_2}
={1}/{p_1}+{1}/{p_2}-{1}/{2}$. 
Suppose that
$f_1 \in \ell^{r_1}_{+}(\Z^n)$ and
$f_2 \in \ell^{r_2}_{+}(\Z^n)$.
Then $f_1 (\nu_1) f_2 (\nu_2)$
belongs to
$\calBform_{p_1',p_2',2}(\Z^n \times \Z^n)$.
\item
Let
$1 < p_1,p_2 < \infty$, 
$1/p_1+1/p_2 \le 3/2$,
$p_1 < r_1 < \infty$,
$p_2 < r_2 < \infty$, and
${1}/{r_1}+{1}/{r_2}={1}/{p_1}+{1}/{p_2}-{1}/{2}$.
Suppose that 
$f_1 \in \ell^{r_1,\infty}_{+}(\Z^n)$ and
$f_2 \in \ell^{r_2,\infty}_{+}(\Z^n)$. 
Then $f_1 (\nu_1) f_2 (\nu_2)$
belongs to
$\calBform_{p_1',p_2',2}(\Z^n \times \Z^n)$.
\end{enumerate}
\end{prop}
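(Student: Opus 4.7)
My plan is to deduce both parts directly from Lemma \ref{thBrascamp-Lieb-prod} after the standard discretization that converts $\Z^n$-sums to $\R^n$-integrals. To a nonnegative sequence $g$ on $\Z^n$ I associate the step function $\widetilde{g}(x) = \sum_{\nu} g(\nu) \ichi_{\nu + Q}(x)$, which satisfies $\|\widetilde{g}\|_{L^s(\R^n)} = \|g\|_{\ell^s(\Z^n)}$ and $\|\widetilde{g}\|_{L^{s,\infty}(\R^n)} \approx \|g\|_{\ell^{s,\infty}(\Z^n)}$. To handle the convolution factor $C(\nu_1+\nu_2)$, set $C^*(\mu) = \max_{\lambda \in \{0,1\}^n} C(\mu + \lambda)$; then $\|C^*\|_{\ell^s} \lesssim \|C\|_{\ell^s}$ (and likewise for weak-$\ell^s$), and for $x_1 \in \nu_1 + Q$, $x_2 \in \nu_2 + Q$ one has $\widetilde{C^*}(x_1 + x_2) \ge C(\nu_1 + \nu_2)$. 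Consequently,
\[
\sum_{\nu_1, \nu_2} f_1(\nu_1) f_2(\nu_2) A(\nu_1) B(\nu_2) C(\nu_1+\nu_2) \le \iint \widetilde{f_1}(x_1) \widetilde{f_2}(x_2) \widetilde{A}(x_1) \widetilde{B}(x_2) \widetilde{C^*}(x_1+x_2)\, dx_1 dx_2.
\]

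For Part (1), I apply Lemma \ref{thBrascamp-Lieb-prod}(1) with $\widetilde{q_1} = r_1$, $\widetilde{q_2} = r_2$, $q_1 = p_1'$, $q_2 = p_2'$, $q_3 = 2$. The scaling identity \eqref{eqLqeq-prod} reads $1/r_1 + 1/r_2 + 1/p_1' + 1/p_2' + 1/2 = 2$, which is just the hypothesis $1/r_1 + 1/r_2 = 1/p_1 + 1/p_2 - 1/2$ after rearranging. The admissibility condition $0 \le 1/\widetilde{q_i} + 1/q_i \le 1$ reduces to $1/r_i \le 1/p_i$, i.e.\ $p_i \le r_i$, which is given.

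For Part (2), I apply Lemma \ref{thBrascamp-Lieb-prod}(2) with the same $\widetilde{q_i}, q_i$, choosing $s_1 = s_2 = \infty$ (to match the weak norms on $f_1, f_2$) and $t_1 = p_1'$, $t_2 = p_2'$, $t_3 = 2$ (which bound the $L^{q_i, t_i}$-norms of $\widetilde{A}, \widetilde{B}, \widetilde{C^*}$ by the corresponding strong $\ell$-norms via $\ell^p = \ell^{p,p}$). The strict inequalities $0 < 1/\widetilde{q_i}, 1/q_i < 1$ and $0 < 1/\widetilde{q_i} + 1/q_i < 1$ all hold because $1 < p_i < \infty$ and $p_i < r_i < \infty$. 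The critical H\"older-type condition $1/s_1 + 1/s_2 + 1/t_1 + 1/t_2 + 1/t_3 \ge 1$ collapses to $1/p_1' + 1/p_2' + 1/2 \ge 1$, i.e.\ $1/p_1 + 1/p_2 \le 3/2$, which is exactly the hypothesis.

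The main conceptual point is the matching of the threshold $1/p_1 + 1/p_2 \le 3/2$ in (2) with the H\"older constraint on the Lorentz indices; everything else is exponent bookkeeping and the routine discretization. There is no serious obstacle beyond checking that Lemma \ref{thBrascamp-Lieb-prod}'s admissibility conditions hold at the chosen exponents, which they do on the nose.
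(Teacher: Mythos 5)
Your proof is correct and follows the same route as the paper: part (1) is \eqref{eqRnBL-prod} with $\widetilde{q_i}=r_i$, $q_1=p_1'$, $q_2=p_2'$, $q_3=2$, and part (2) is \eqref{eqRnBLLorentz-prod} with the same $(\widetilde{q_i},q_i)$ and $s_1=s_2=\infty$, $t_1=p_1'$, $t_2=p_2'$, $t_3=2$; the paper leaves the discretization implicit, whereas you spell it out. One small slip: with $Q=[-1/2,1/2)^n$ the integer offset between $x_1+x_2$ and $\nu_1+\nu_2$ can range over $\{-1,0,1\}^n$, so $C^{*}$ should be defined as $\max_{\lambda\in\{-1,0,1\}^n}C(\mu+\lambda)$ rather than $\max_{\lambda\in\{0,1\}^n}C(\mu+\lambda)$ (or use a shifted cube), but this is trivially corrected and the norm comparability $\|C^{*}\|\lesssim\|C\|$ is unaffected.
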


\begin{proof}
The claim (1) follows from the inequality 
\eqref{eqRnBL-prod} with 
$\widetilde{q_1}=r_1$, 
$\widetilde{q_2}=r_2$, 
$q_1=p_1^{\prime}$, 
$q_2=p_2^{\prime}$, 
$q_3=2$.  
The claim (2) follows from the inequality 
\eqref{eqRnBLLorentz-prod} with 
$\widetilde{q_1}=r_1$, 
$\widetilde{q_2}=r_2$, 
$s_1 = s_2 = \infty$, 
$q_1=t_1=p_1^{\prime}$, 
$q_2=t_2=p_2^{\prime}$, 
$q_3=t_3=2$. 
\end{proof}

\section{Proofs of the main results} 
\label{sectionProof}
\subsection{Proof of the `if' part of Theorem \ref{thmain-thm-WA}}

We shall prove the 
proposition below, which essentially 
contains the `if' part of Theorem \ref{thmain-thm-WA}.

\begin{prop}
\label{thmain-prop}
Let $p_1,p_2,p,q_1,q_2,q \in (0,\infty]$
and
$1/p = 1/p_1+1/p_2$.
Assume that $V \in \calM (\R^{2n})$
and the restriction of $V$ to
$\Z^n \times \Z^n$ belongs to
$\calB_{q_1,q_2,q}(\Z^n \times \Z^n)$.
Then  all 
$T_{\sigma} \in \mathrm{Op}
(BS^{V}_{0,0} (\R^n))$
are bounded from
$W^{p_1,q_1} \times W^{p_2,q_2}$
to $W^{p,q}$.
\end{prop}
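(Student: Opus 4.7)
The plan is to use a discrete frequency partition of the symbol $\sigma$, derive pointwise bounds on each localized piece in terms of Peetre-type maximal majorants of the Wiener-amalgam building blocks of $f_1, f_2$, and then assemble the $W^{p,q}$ norm via the defining property of the class $\calB_{q_1,q_2,q}$.

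First I would take $\varphi \in \calS(\R^n)$ supported near the origin with $\sum_{\nu \in \Z^n} \varphi(\xi-\nu) \equiv 1$, fix $\Phi \in C_c^\infty(\R^n)$ equal to $1$ on $\supp \varphi$, and decompose $T_\sigma = \sum_{\nu_1,\nu_2 \in \Z^n} T_{\sigma_{\nu_1,\nu_2}}$, where $\sigma_{\nu_1,\nu_2}(x,\xi_1,\xi_2) = \sigma(x,\xi_1,\xi_2)\varphi(\xi_1-\nu_1)\varphi(\xi_2-\nu_2)$; note that $T_{\sigma_{\nu_1,\nu_2}}$ only sees $\Phi(D-\nu_j)f_j$. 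Extracting the modulation $e^{ix\cdot(\nu_1+\nu_2)}$ by changing variables $\xi_j = \nu_j + \eta_j$, the reduced symbol $\tau_{\nu_1,\nu_2}(x,\eta_1,\eta_2) = \sigma(x,\nu_1+\eta_1,\nu_2+\eta_2)\varphi(\eta_1)\varphi(\eta_2)$ is compactly supported in $(\eta_1,\eta_2)$ and, since $V$ lies in the moderate class $\calM(\R^{2n})$, its $x$-derivatives of every order are uniformly bounded by a constant times $V(\nu_1,\nu_2)$. A kernel estimate coming from integration by parts in the dual of $x$, combined with Bernstein-type control of the compactly frequency-supported bilinear part, then yields for every large $N$ a pointwise bound
\[
|\kappa(D-k) T_{\sigma_{\nu_1,\nu_2}}(f_1,f_2)(x)|
\lesssim
V(\nu_1,\nu_2)\,
\langle k-\nu_1-\nu_2 \rangle^{-N}\,
F_1^{*}(x,\nu_1)\, F_2^{*}(x,\nu_2),
\]
where $F_j^{*}(x,\nu_j)$ is a Peetre-type maximal majorant of $\Phi(D-\nu_j)f_j$ at $x$.

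Next, for each fixed $x$ I would take the $\ell^q_k$ norm of the resulting sum over $(\nu_1,\nu_2)$. The rapidly decaying factor $\langle k-\nu_1-\nu_2\rangle^{-N}$ acts as an $\ell^{\min(1,q)}$-convolution kernel in the variable $k-\nu_1-\nu_2$, so a Young-type inequality reduces matters to
\[
\Big\|\sum_{\nu_1+\nu_2=k} V(\nu_1,\nu_2) F_1^{*}(x,\nu_1) F_2^{*}(x,\nu_2)\Big\|_{\ell^q_k}
\le
\|V\|_{\calB_{q_1,q_2,q}}\,
\|F_1^{*}(x,\cdot)\|_{\ell^{q_1}}\,
\|F_2^{*}(x,\cdot)\|_{\ell^{q_2}},
\]
by the defining property of $\calB_{q_1,q_2,q}$. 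Taking the $L^p_x$ norm, H\"older's inequality with $1/p = 1/p_1+1/p_2$ together with the standard vector-valued Peetre maximal inequality bounds $\|\,\|F_j^{*}(x,\cdot)\|_{\ell^{q_j}_{\nu_j}}\,\|_{L^{p_j}_x}$ by $\|f_j\|_{W^{p_j,q_j}}$, completing the proof.

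The main obstacle I anticipate is arranging the pointwise bound in the first step with the correct Peetre maximal order. Concretely, the Peetre order governing $F_j^{*}$ must be large enough (essentially exceeding $n/\min(p_1,p_2,q_1,q_2)$) so that the vector-valued maximal inequality used in the final step remains valid throughout the quasi-Banach range; each increase of that order must then be paid for by taking more $x$-derivatives of $\sigma$ inside the integration by parts, which is permitted precisely because the moderate hypothesis $V\in\calM(\R^{2n})$ gives uniform control of all those derivatives by $V(\nu_1,\nu_2)$.
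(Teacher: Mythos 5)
Your outline is correct and is a genuinely different route from the one in the paper. The paper never introduces maximal functions: after the frequency decomposition $\sigma=\sum_{\boldsymbol\nu}\sigma_{\boldsymbol\nu}$, it expands $\sigma_{\boldsymbol\nu}$ in a Fourier series in $(\xi_1,\xi_2)$ (which turns the pseudodifferential operator into a product of a bounded function of $x$ with \emph{exact translates} $\widetilde\varphi(D-\nu_j)f_j(\cdot+k_j)$), and further splits the coefficient $Q_{\boldsymbol\nu,\boldsymbol k}$ in the $x$-Fourier variable via the P\"aiv\"arinta--Somersalo lemma; the two $\ell^1$-summable weights $\langle\boldsymbol k\rangle^{-2M}$ and $\langle\ell\rangle^{-2N}$ then control the double sum, and the only analytic input needed to absorb $\kappa(D-\mu)$ is the Nikol'skij inequality for band-limited functions. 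You instead collapse these two discrete expansions into a single pointwise kernel estimate with rapid decay in $k-\nu_1-\nu_2$ and Peetre-type majorants $F_j^*(x,\nu_j)$, and then close with the vector-valued Fefferman--Stein/Peetre maximal inequality on the Wiener amalgam scale. Both work; the trade-off is exactly the one you flag. The maximal-function route is shorter to state but forces you to track the Peetre order against $\min(p_j,q_j)$ throughout the quasi-Banach range and to invoke the corresponding vector-valued maximal inequality, while the paper's double discrete expansion buys a maximal-function-free argument in which the only thresholds are the crude requirements $2M>2n/\min(1,p,q)$ and $2N>n/\min(1,p,q)$ on the number of integrations by parts. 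If you flesh out your sketch, the one step that genuinely needs care is the claimed pointwise bound: since $\sigma$ depends on $x$, the output of $T_{\sigma_{\nu_1,\nu_2}}$ is \emph{not} band-limited near $\nu_1+\nu_2$, and the decay $\langle k-\nu_1-\nu_2\rangle^{-N}$ has to be extracted from the partial Fourier transform of $\sigma_{\nu_1,\nu_2}$ in $x$ (this is precisely the role of $V\in\calM$ and the integration by parts you mention); make sure the kernel estimate also produces the weight $\langle y\rangle^{-a}$ with $a$ at least $n/\min(p_1,p_2,q_1,q_2)$ before taking the supremum, or the final maximal inequality step will not close.
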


Before we prove this proposition, 
we shall see that the `if' part of Theorem \ref{thmain-thm-WA}  
follows from it.

\begin{proof}[Proof of the `if' part of Theorem \ref{thmain-thm-WA}]
Suppose $p_1,p_2,p,q_1,q_2,q \in (0,\infty]$, 
$V \in \calB_{q_1,q_2,q}(\Z^n \times \Z^n)$,  
and
$\sigma \in BS^{ \widetilde{V} }_{0,0} (\R^n)$.
Take a function $V^{\ast} \in \calM(\R^{2n})$ 
satisfying the properties as mentioned in Proposition \ref{thVast}.
Then $\widetilde{V} \lesssim V^{\ast}$
and thus $\sigma \in BS^{ V^{\ast} }_{0,0} (\R^n)$.
Since $V^{\ast}|_{\Z^n \times \Z^n}$ belongs to 
$\calB_{q_1, q_2, q}$, Proposition \ref{thmain-prop} implies 
the boundedness 
$T_\sigma : W^{p_1,q_1} \times W^{p_2,q_2} 
\to W^{p,q}$ 
for $1/p=1/p_1+1/p_2$.
Since $W^{r,q} \hookrightarrow W^{s,q}$ for $r \le s$
by Proposition \ref{thWaembd} (1),
the boundedness also holds
for 
$1/p \le 1/p_1+1/p_2$.
\end{proof}

Now, we shall prove Proposition \ref{thmain-prop}.
For this, we use the following lemma.  

\begin{lem}[{P\"aiv\"arinta--Somersalo \cite[Lemma 2.2]{PS-1988}}]
\label{thPSdec}
For each $N\in \N$, 
there exists a sequence of functions 
$\{ \chi_{\ell} \}_{\ell \in \Z^n} \subset \calS (\R^n)$ 
such that 
\begin{align*}
&\supp \chi_{\ell} \in \ell + [-1, 1]^n, 
\qquad 
\sup_{\ell} \| \calF^{-1} \chi_\ell \|_{L^1} \le c, \\
&\sum_{\ell \in \Z^n} \langle \ell \rangle^{-2N} \langle \zeta \rangle^{2N} \chi_\ell (\zeta) = 1
\quad\textrm{for any}\quad
\zeta \in \R^n, 
\end{align*}
where $c$ is a constant depending only on $N$ and $n$. 
\end{lem}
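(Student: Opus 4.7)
The plan is to construct the family $\{\chi_\ell\}$ explicitly from a single bump function and then verify the three required properties. The essential trick is that the weight factor $\langle\ell\rangle^{2N}\langle\zeta\rangle^{-2N}$ is harmless on the support of a bump localized near $\ell$, because $\langle\zeta\rangle \approx \langle\ell\rangle$ there, so everything reduces to a uniform-in-$\ell$ smoothness estimate.

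Concretely, I would first fix a nonnegative $\phi\in C^{\infty}_0(\R^n)$ with $\supp \phi\subset [-1,1]^n$ and $\sum_{\ell\in\Z^n}\phi(\zeta-\ell)=1$ for all $\zeta\in\R^n$ (standard smooth partition of unity subordinate to the unit-cube lattice cover). Then set
\[
\chi_{\ell}(\zeta)=\langle\ell\rangle^{2N}\,\langle\zeta\rangle^{-2N}\,\phi(\zeta-\ell),\qquad \ell\in\Z^n.
\]
The support condition $\supp\chi_\ell\subset \ell+[-1,1]^n$ is immediate, and the identity
\[
\sum_{\ell\in\Z^n}\langle\ell\rangle^{-2N}\langle\zeta\rangle^{2N}\chi_\ell(\zeta)=\sum_{\ell\in\Z^n}\phi(\zeta-\ell)=1
\]
follows by construction. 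Clearly each $\chi_\ell$ lies in $\calS(\R^n)$.

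The only substantive step is the uniform $L^1$ bound on $\calF^{-1}\chi_\ell$. I would translate: setting $\zeta=\ell+\eta$,
\[
(\calF^{-1}\chi_\ell)(x)=\frac{e^{ix\cdot\ell}}{(2\pi)^n}\int_{\R^n}e^{ix\cdot\eta}F_\ell(\eta)\,d\eta,\qquad F_\ell(\eta):=\langle\ell\rangle^{2N}\langle\ell+\eta\rangle^{-2N}\phi(\eta),
\]
so $\|\calF^{-1}\chi_\ell\|_{L^1}=\|\calF^{-1}F_\ell\|_{L^1}$. The point is that $F_\ell$ is supported in the fixed compact set $[-1,1]^n$, and on $\supp\phi$ we have $\langle \ell+\eta\rangle\approx\langle\ell\rangle$. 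A direct calculation (Leibniz on the product, together with the elementary inequality $|\partial_\eta^\beta \langle\ell+\eta\rangle^{-2N}|\lesssim_\beta\langle\ell+\eta\rangle^{-2N}$) then gives $\sup_{\eta,\ell}|\partial^\alpha F_\ell(\eta)|\le C_\alpha$ for every multi-index $\alpha$, with constants independent of $\ell$.

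From these uniform smoothness and compact-support bounds, I would conclude by the standard Fourier decay argument: integrating by parts in $\eta$ shows that for every $M\in\N$,
\[
(1+|x|)^M|(\calF^{-1}F_\ell)(x)|\lesssim_M\sum_{|\alpha|\le M}\|\partial^\alpha F_\ell\|_{L^\infty}\le C_M
\]
uniformly in $\ell$. Choosing $M=n+1$ gives the required $\sup_\ell\|\calF^{-1}\chi_\ell\|_{L^1}<\infty$. The main (and only) technical obstacle is precisely this uniform control of the derivatives of $F_\ell$; once one observes $\langle\ell+\eta\rangle\approx\langle\ell\rangle$ on $\supp\phi$, the $\langle\ell\rangle^{2N}$ prefactor is absorbed and the rest is routine.
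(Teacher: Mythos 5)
Your construction is correct: the paper itself gives no proof of this lemma (it is quoted from P\"aiv\"arinta--Somersalo \cite[Lemma 2.2]{PS-1988}), and your argument --- defining $\chi_{\ell}(\zeta)=\langle\ell\rangle^{2N}\langle\zeta\rangle^{-2N}\phi(\zeta-\ell)$ from a lattice partition of unity and obtaining the uniform $L^1$ bound via Peetre's inequality $\langle\ell+\eta\rangle\approx\langle\ell\rangle$ on $\supp\phi$ together with integration by parts --- is exactly the standard construction behind the cited result. All three properties are verified soundly and the constants depend only on $n$ and $N$, as required.
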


\begin{proof}[Proof of Proposition \ref{thmain-prop}]
Let $\sigma \in BS^{V}_{0,0}(\R^n)$. 
We first rewrite 
$T_\sigma (f_1,f_2)$
by using the Fourier series expansion 
(this method was used by 
Coifman and Meyer \cite{CM-Ast, CM-AIF}).

Take a function 
$\varphi$ 
such that 
\begin{equation*}
\varphi \in \calS (\R^n), 
\quad 
\supp \varphi \subset [-1, 1]^{n}, \quad
\sum_{\ell\in\Z^n} \varphi (\zeta-\ell) = 1
\quad\textrm{for all}\quad
\zeta \in \R^n.  
\end{equation*}
Using $\varphi$, 
we decompose the symbol as
\[
\sigma(x,\xi_1,\xi_2)
=
\sum_{ (\nu_1,\nu_2) \in \Z^n \times \Z^n } 
\sigma(x,\xi_1,\xi_2)\varphi (\xi_1-\nu_1)\varphi (\xi_2-\nu_2)
=
\sum_{\boldsymbol{\nu} \in \Z^n \times \Z^n} 
\sigma_{\boldsymbol{\nu}}(x,\xi_1,\xi_2), 
\]
where 
\[
\sigma_{\boldsymbol{\nu}}(x,\xi_1,\xi_2)=
\sigma(x,\xi_1,\xi_2)\varphi (\xi_1-\nu_1)\varphi (\xi_2-\nu_2), \qquad
\boldsymbol{\nu}=(\nu_1,\nu_2) \in \Z^n \times \Z^n.
\]
Thus 
$T_{\sigma}=\sum_{\boldsymbol{\nu}} 
T_{\sigma_{\boldsymbol{\nu}}}$. 
Since $V\in\calM (\R^{2n})$ and 
$\varphi \in \calS (\R^n)$, 
we have 
\begin{equation}\label{eqWnu}
\sup_{ x,\xi_1,\xi_2 \in \R^n }
\left|
\partial^{\alpha}_x 
\partial^{\beta_1}_{\xi_1} 
\partial^{\beta_2}_{\xi_2} 
\sigma_{\boldsymbol{\nu}} (x,\xi_1,\xi_2)
\right|
\lesssim
V (\boldsymbol{\nu})
\end{equation}
for all multi-indices $\alpha,\beta_1,\beta_2 \in (\N_0)^n$.

We take a function $\widetilde\varphi \in \calS (\R^n)$ 
such that 
\[
\widetilde\varphi = 1 \;\; \text{on}\;\; [-1, 1]^n, 
\quad 
\supp \widetilde\varphi \subset [-2 , 2]^{n}, 
\quad 
\bigg| \sum_{k\in \Z^n} \widetilde\varphi(\cdot-k) \bigg| \geq 1. 
\]
Then,  
extending 
the function 
$\sigma_{\boldsymbol{\nu}} (x,\cdot ,\cdot )|_{
\boldsymbol{\nu} + [-\pi , \pi ]^{2n}}$ 
to a $2\pi  \Z^{2n}$-periodic function 
on $\R^{n}\times \R^n$ 
and using Fourier series expansion, 
we can write 
$\sigma_{\boldsymbol{\nu}} (x,\xi_1,\xi_2)$ as 
\begin{align*}
\sigma_{\boldsymbol{\nu}} (x,\xi_1,\xi_2)
= \sum_{\boldsymbol{k}=(k_1,k_2) \in\Z^n \times \Z^n}\, 
P_{\boldsymbol{\nu},\boldsymbol{k}} (x)\, 
e^{i (k_1 \cdot \xi_1 + k_2 \cdot \xi_2)}\, 
\widetilde\varphi(\xi_1-\nu_1) \widetilde\varphi(\xi_2-\nu_2) 
\end{align*}
with 
\begin{align*}
P_{\boldsymbol{\nu},\boldsymbol{k}} (x) &= 
\frac{1}{(2 \pi )^{2n}} \int_{\boldsymbol{\nu}+[-\pi , \pi ]^{2n}}
e^{-i (k_1 \cdot \eta_1 + k_2 \cdot \eta_2)}
\sigma_{\boldsymbol{\nu}} (x,\eta_1, \eta_2)
\,d\eta_1d\eta_2.
\end{align*}
Integration by parts gives
\begin{align*}
P_{\boldsymbol{\nu},\boldsymbol{k}} (x) 
&=
\frac{\langle \boldsymbol{k} \rangle^{-2M}}
{(2 \pi )^{2n}} \int_{\boldsymbol{\nu}+[-\pi , \pi ]^{2n}}
e^{-i (k_1 \cdot \eta_1 + k_2 \cdot \eta_2)}
\big( I-\Delta_{\eta_1,\eta_2} \big)^{M} \big\{ \sigma_{\boldsymbol{\nu}} (x,\eta_1, \eta_2) \big\}
\,d\eta_1d\eta_2
\\
&=
\langle  \boldsymbol{k} \rangle^{-2M}
Q_{\boldsymbol{\nu},\boldsymbol{k}} (x), 
\end{align*}
where 
\begin{equation}\label{eqQnuk}
\begin{split}
&
Q_{\boldsymbol{\nu},\boldsymbol{k}} (x) 
\\
&=
\frac{1}{(2 \pi )^{2n}} 
\int_{\boldsymbol{\nu}+[-\pi , \pi ]^{2n}}
e^{-i (k_1 \cdot \eta_1 + k_2 \cdot \eta_2)}
\big( I-\Delta_{\eta_1,\eta_2} \big)^{M} \big\{ \sigma_{\boldsymbol{\nu}} (x,\eta_1, \eta_2) \big\}
\,d\eta_1d\eta_2.  
\end{split}
\end{equation}
We further decompose 
$Q_{\boldsymbol{\nu},\boldsymbol{k}}$
by using Lemma \ref{thPSdec} as 
\begin{align*}
Q_{\boldsymbol{\nu},\boldsymbol{k}} (x) &= 
\sum_{\ell \in \Z^n} \langle \ell \rangle^{-2N} 
Q_{\boldsymbol{\nu},\boldsymbol{k},\ell} (x), \\
Q_{\boldsymbol{\nu},\boldsymbol{k},\ell} (x)&= 
\calF^{-1} \left[ \langle \zeta \rangle^{2N} 
\chi_\ell (\zeta) 
\widehat{Q_{\boldsymbol{\nu},\boldsymbol{k}}}(\zeta) \right](x).
\end{align*}
Thus 
we can express the symbol $\sigma_{\boldsymbol{\nu}} $ as 
\begin{align*}\label{eqsigmanu}
\sigma_{\boldsymbol{\nu}} (x,\xi_1,\xi_2)
= 
\sum_{
\substack{
\boldsymbol{k} 
\in \Z^n \times \Z^n, 
\\ 
\ell \in \Z^n}}
\, 
\langle  \boldsymbol{k} \rangle^{-2M}
\langle \ell \rangle^{-2N} 
Q_{\boldsymbol{\nu},\boldsymbol{k}, \ell} (x)\, 
e^{i (k_1 \cdot \xi_1 + k_2 \cdot \xi_2)}\, 
\widetilde\varphi(\xi_1-\nu_1) \widetilde\varphi(\xi_2-\nu_2). 
\end{align*}
From this representation,
the bilinear operator $T_{\sigma_{\boldsymbol{\nu}}}$
is written as 
\begin{equation*}\label{eqTsigmanu}
T_{\sigma_{\boldsymbol{\nu}}}(f_1,f_2)(x)
= 
\sum_{\boldsymbol{k}, \ell }
\langle \boldsymbol{k} \rangle^{-2M}
\langle \ell \rangle^{-2N} 
Q_{\boldsymbol{\nu},\boldsymbol{k}, \ell} (x)
\prod_{j=1,2}
\widetilde\varphi(D-\nu_j)f_j(x+ k_j). 
\end{equation*}
We write 
\[
F^{j}_{\nu_j,k_j}(x) = \widetilde\varphi(D-\nu_j)f_j(x+ k_j),
\quad
j=1,2. 
\]
Thus 
\begin{equation}\label{eqdecompositionTsnu}
T_{\sigma_{\boldsymbol{\nu}}}(f_1,f_2)(x)
= 
\sum_{\boldsymbol{k},\ell}
\langle \boldsymbol{k} \rangle^{-2M} \langle \ell \rangle^{-2N} 
Q_{\boldsymbol{\nu},\boldsymbol{k},\ell} (x)
\prod_{j=1,2}
F^{j}_{\nu_j,k_j}(x) .
\end{equation}

Now, we actually start the estimate for the boundedness.
We use the expression 
\eqref{eqdecompositionTsnu} with the integers $M$ and $N$ 
satisfying 
$2M > 2n/\min(1,p,q)$
and
$2N > n/\min(1,p,q)$.
Then, since 
$T_{\sigma} = \sum_{\boldsymbol{\nu}} 
T_{\sigma_{\boldsymbol{\nu}}}$,
from \eqref{eqdecompositionTsnu} we have
\begin{align*}
\begin{split}
&
\| T_{\sigma}(f_1,f_2) \|_{W^{p,q}} ^{\min(1,p,q)}
\\&\leq
\sum_{\boldsymbol{k},\ell}
\langle 
 \boldsymbol{k} \rangle^{-2M\min(1,p,q)} \langle \ell \rangle^{-2N\min(1,p,q)} 
\left\| \sum_{\boldsymbol{\nu}} 
Q_{\boldsymbol{\nu},\boldsymbol{k},\ell}
\prod_{j=1,2} 
F^{j}_{\nu_j,k_j}
\right\|_{W^{p,q}}^{\min(1,p,q)}
\\&\lesssim 
\sup_{\boldsymbol{k},\ell}
\left\| \sum_{\boldsymbol{\nu}} 
Q_{\boldsymbol{\nu},\boldsymbol{k},\ell}
\prod_{j=1,2} 
F^{j}_{\nu_j,k_j}
\right\|_{W^{p,q}}^{\min(1,p,q)}.
\end{split}
\end{align*}
In order to complete the proof 
of 
Proposition \ref{thmain-prop}, 
it is sufficient to prove the estimate 
\begin{align}\label{eqketsuron}
\sup_{\boldsymbol{k},\ell}
\left\| 
\sum_{\boldsymbol{\nu}} 
Q_{\boldsymbol{\nu},\boldsymbol{k},\ell}
\prod_{j=1,2} 
F^{j}_{\nu_j,k_j}
\right\|_{W^{p,q}}
\lesssim
\|V\|_{\calB_{q_1,q_2,q}}
\| f_1 \|_{W^{p_1,q_1}}
\| f_2 \|_{W^{p_2,q_2}}.
\end{align}

Notice that
\begin{equation}\label{eqsuppQF}
\supp \calF 
\left[
Q_{\boldsymbol{\nu},\boldsymbol{k},\ell}  
\prod_{j=1,2} F^{j}_{\nu_j,k_j}\right]
\subset \{\zeta\in\R^n: |\zeta - \ell - \nu_1 -\nu_2 | \lesssim 1 \} 
\end{equation}
since
$\supp \calF Q_{\boldsymbol{\nu},\boldsymbol{k},\ell}
\subset \{\zeta\in\R^n: |\zeta - \ell | \lesssim 1 \}$
and
$\supp \calF F^{j}_{\nu_j,k_j}
\subset \{\zeta\in\R^n: |\zeta - \nu_j | \lesssim 1 \}$.
Moreover,
the support of the function $\kappa \in \calS(\R^n)$ 
in the definition of $W^{p,q}$ is compact.
By these support conditions, we have
\[
\kappa(D-\mu)\left[
\sum_{\boldsymbol{\nu}} 
Q_{\boldsymbol{\nu},\boldsymbol{k},\ell}
\prod_{j=1,2} F^{j}_{\nu_j,k_j}\right]
=
\sum_{\boldsymbol{\nu}:|\nu_1+\nu_2+\ell-\mu|\lesssim 1} 
\kappa(D-\mu)\left[
Q_{\boldsymbol{\nu},\boldsymbol{k},\ell}
\prod_{j=1,2} F^{j}_{\nu_j,k_j}\right]. 
\] 
Thus
\begin{align}
&\left\| 
\sum_{\boldsymbol{\nu}} 
Q_{\boldsymbol{\nu},\boldsymbol{k},\ell}
\prod_{j=1,2} 
F^{j}_{\nu_j,k_j}
\right\|_{W^{p,q}}
\nonumber 
\\
&=
\left\| \left\| 
\sum_{\tau:|\tau|\lesssim 1} \, 
\sum_{\boldsymbol{\nu}: \nu_1+\nu_2+\ell-\mu=\tau} 
\kappa(D-\mu)\left[
Q_{\boldsymbol{\nu},\boldsymbol{k},\ell}
\prod_{j=1,2} F^{j}_{\nu_j,k_j}\right]
\right\|_{\ell^{q}_{\mu}(\Z^n)} \right\|_{L^{p}(\R^n)}
\nonumber
\\&\lesssim
\sum_{\tau:|\tau|\lesssim 1} 
\left\| \left\| 
\kappa(D-\mu-\ell+\tau)\left[
\sum_{\boldsymbol{\nu}:\nu_1+\nu_2=\mu}
Q_{\boldsymbol{\nu},\boldsymbol{k},\ell}
\prod_{j=1,2} F^{j}_{\nu_j,k_j}\right]
\right\|_{\ell^{q}_{\mu}(\Z^n)} \right\|_{L^{p}(\R^n)} .
\label{eqestimatethis}
\end{align}
We shall estimate 
\eqref{eqestimatethis}.

We write 
\[
h_{\mu}
=
h_{\mu, \boldsymbol{k},\ell}
=\sum_{\boldsymbol{\nu}:\nu_1+\nu_2=\mu}
Q_{\boldsymbol{\nu},\boldsymbol{k},\ell}
\prod_{j=1,2} F^{j}_{\nu_j,k_j} .
\]
Since $\supp \kappa$ is compact and since 
\eqref{eqsuppQF} holds, we have 
\[
\supp \calF \left[ \check\kappa(\cdot) \, h_\mu(x-\cdot) \right]
\subset \{\zeta \in \R^n : |\zeta+\mu+\ell| \lesssim 1\}.
\]
Hence, 
the Nikol'skij inequality
(see, {\it e.g.}, \cite[Proposition 1.3.2]{triebel 1983})
gives
\begin{equation}\label{eqNikolskij}
\left| \kappa(D-\mu-\ell+\tau)h_{\mu}(x) \right|
\le
\left\| \check\kappa(y) \, h_\mu(x-y) \right\|_{L^1_y}
\lesssim
\left\| \check\kappa(y) \, 
h_\mu(x-y) \right\|_{L^{\epsilon}_y}
\end{equation}
for any $\epsilon \in (0,1]$. 
Here it should be remarked that the implicit 
constant in \eqref{eqNikolskij} 
depends on the diameter of 
$\supp \calF 
\left[ \check\kappa(\cdot) \, h_\mu(x-\cdot) \right]$ 
but not on its location,
and hence 
the implicit constant is independent of 
$\mu, \boldsymbol{k}, 
\ell$, and $\tau$.
Using \eqref{eqNikolskij} with $\epsilon = \min (1, p, q)$ and 
using the Minkowski inequality for integral, 
we have 
\begin{align*}
\left\| \left\| 
\kappa(D-\mu-\ell+\tau)h_{\mu}
\right\|_{\ell^{q}_{\mu}} \right\|_{L^{p}}
&\lesssim
\left\| \left\| 
\left\| \check\kappa(y) \, h_\mu(x-y) 
\right\|_{L^{\epsilon }_y}
\right\|_{\ell^{q}_{\mu}} \right\|_{L^{p}_x}
\\&\leq
\left\|
\left\| \left\| 
\check\kappa(y) \, 
h_\mu(x-y) 
\right\|_{\ell^{q}_{\mu}} 
\right\|_{L^{p}_x}
\right\|_{L^{\epsilon }_y}
\approx
\left\| \left\| 
h_\mu 
\right\|_{\ell^{q}_{\mu}} \right\|_{L^{p}}.
\end{align*}
Thus 
we obtain
\begin{align}\label{eqmatome}
\left\| 
\sum_{\boldsymbol{\nu}} 
Q_{\boldsymbol{\nu},\boldsymbol{k},\ell}
\prod_{j=1,2} 
F^{j}_{\nu_j,k_j}
\right\|_{W^{p,q}}
\lesssim 
\left\| \left\| 
\sum_{\boldsymbol{\nu}:\nu_1+\nu_2=\mu}
Q_{\boldsymbol{\nu},\boldsymbol{k},\ell}
\prod_{j=1,2} F^{j}_{\nu_j,k_j}
\right\|_{\ell^{q}_{\mu}(\Z^n)} \right\|_{L^{p}(\R^n)}
\end{align}
with the implicit constant 
independent of $\boldsymbol{k}$ and $\ell$.

As for $Q_{\boldsymbol{\nu},\boldsymbol{k},\ell} $, 
we have 
\begin{align*}
\big\| Q_{\boldsymbol{\nu},\boldsymbol{k},\ell} \big\|_{L^\infty}
=
\left\| 
\calF^{-1} \left[ \chi_\ell (\zeta) 
\big( 
(I-\Delta)^{N} Q_{\boldsymbol{\nu},\boldsymbol{k}} 
\big)^{\wedge} (\zeta)\right]
\right\|_{L^\infty}
\leq
\| \calF^{-1} \chi_\ell \|_{L^1} 
\| (I-\Delta)^{N} Q_{\boldsymbol{\nu},\boldsymbol{k}}
 \|_{L^\infty}.
\end{align*}
Since 
$\sup_{\ell} \| \calF^{-1} \chi_\ell \|_{L^1} \lesssim 1$
by Lemma \ref{thPSdec} 
and since 
$
\| (I-\Delta_x)^{N} 
Q_{\boldsymbol{\nu},\boldsymbol{k}} (x)
\|_{L^\infty_x}
\lesssim V(\boldsymbol{\nu})
$
by \eqref{eqWnu} and \eqref{eqQnuk}, 
we obtain 
\begin{equation}\label{eqestimateQnukell}
\big\| Q_{\boldsymbol{\nu},\boldsymbol{k},\ell} 
\big\|_{L^\infty}
\lesssim V(\boldsymbol{\nu})
\end{equation}
with 
the implicit constant independent of $\boldsymbol{k}$ and $\ell$.

Now using \eqref{eqmatome},
\eqref{eqestimateQnukell}, 
the definition of 
$\|V \|_{\calB_{q_1,q_2,q}}$,
and the H\"older inequality for
$1/p_1+1/p_2=1/p$, 
we have 
\begin{align*}
&
\left\| 
\sum_{\boldsymbol{\nu}} 
Q_{\boldsymbol{\nu},\boldsymbol{k},\ell}
\prod_{j=1,2} 
F^{j}_{\nu_j,k_j}
\right\|_{W^{p,q}}\lesssim 
\left\| \left\| 
\sum_{\boldsymbol{\nu}:\nu_1+\nu_2=\mu}
V(\boldsymbol{\nu})
\prod_{j=1,2} \left| F^{j}_{\nu_j,k_j} \right|
\right\|_{\ell^{q}_{\mu}(\Z^n)} \right\|_{L^{p}(\R^n)} 
\\
&
\le
\left\| 
\|V\|_{\calB_{q_1,q_2,q}}
\prod_{j=1,2}
 \big\| F^{j}_{\nu_j,k_j}\big\|_{\ell^{q_j}_{\nu_j}(\Z^n)}
\right\|_{L^{p}(\R^n)}
\leq 
\|V\|_{\calB_{q_1,q_2,q}}
\prod_{j=1,2} 
\Big\| \big\| F^{j}_{\nu_j,k_j}\big\|_{\ell^{q_j}_{\nu_j}(\Z^n)} \Big\|_{L^{p_j}(\R^n)}.
\end{align*}
The choice of
$\widetilde\varphi \in \calS(\R^n)$ implies that
\begin{align*}
\Big\| \big\| F^{j}_{\nu_j,k_j}(x) \big\|_{\ell^{q_j}_{\nu_j}} \Big\|_{L^{p_j}_x}
= 
\Big\| \big\|
\widetilde\varphi(D-\nu_j)f_j(x+ k_j) 
\big\|_{\ell^{q_j}_{\nu_j}} \Big\|_{L^{p_j}_x}
\approx
\| f_j \|_{W^{p_j,q_j}}. 
\end{align*}
Thus we obtain 
\begin{align*}
\left\| 
\sum_{\boldsymbol{\nu}} 
Q_{\boldsymbol{\nu},\boldsymbol{k},\ell}
\prod_{j=1,2} 
F^{j}_{\nu_j,k_j}
\right\|_{W^{p,q}}
\lesssim
\|V\|_{\calB_{q_1,q_2,q}}
\| f_1 \|_{W^{p_1,q_1}}
\| f_2 \|_{W^{p_2,q_2}}
\end{align*}
with the implicit constant independent of $\boldsymbol{k}$ and $\ell$.
We have proved 
\eqref{eqketsuron} and 
the proof of 
Proposition \ref{thmain-prop} 
is complete. 
\end{proof}

\subsection{Proof of the `only if' part of Theorem \ref{thmain-thm-WA}}

The basic idea used here
goes back to \cite[Lemma 6.3]{MT-2013}. 

\begin{proof}[Proof of the `only if' part of Theorem \ref{thmain-thm-WA}] 
We assume all 
$T_{\sigma} \in \mathrm{Op}
(BS^{\widetilde{V}}_{0,0} (\R^n))$ 
are bounded from
$W^{p_1,q_1} \times W^{p_2,q_2}$
to $W^{p,q}$. 
Then by the closed graph theorem 
there exist a positive integer $M$
and a positive constant $C$ such that
\begin{align}\label{eqinequality001}
\begin{split}
\|T_{\sigma}\|_{ W^{p_1,q_1} \times W^{p_2,q_2} \to W^{p,q} } 
\le 
C\max_{|\alpha|, |\beta_1|, |\beta_2| \le M}
\left\| 
\widetilde{V}(\xi_1,\xi_2)^{-1} 
\partial^{\alpha}_x 
\partial^{\beta_1}_{\xi_1} \partial^{\beta_2}_{\xi_2} 
\sigma(x,\xi_1,\xi_2)
\right\|_{L^{\infty}}
\end{split}
\end{align}
for all bounded smooth functions $\sigma$ on $(\R^n)^{3}$ 
(see \cite[Lemma 2.6]{BBMNT} 
for the argument using the closed graph theorem).

To define the norm of the Wiener amalgam space, 
we use a real valued function 
$\kappa\in \calS (\R^n)$ such that 
\[
\kappa \geq 0, \quad
\kappa =1 \ \text{on $[-1/2,1/2]^n$}, \quad
\supp \kappa \subset [-3/4,3/4]^n, \quad
\sum_{k\in\Z^n} \kappa (\cdot-k) \geq 1.  
\]
We take functions 
$\varphi, \widetilde{\varphi} \in \calS (\R^n)$ such that 
\begin{align*}
& \mathrm{supp}\, \varphi \subset [-1/8, 1/8]^n, 
\quad 
\varphi \not\equiv 0, 
\\
&
\mathrm{supp}\, \widetilde{\varphi} 
\subset [-1/2,1/2]^n, 
\quad
\widetilde{\varphi}=1 
\;\;\text{on}\;\;
[-1/4,1/4]^n.  
\end{align*}
Define the symbol $\sigma$ by
\[
\sigma (x,\xi_1,\xi_2)
=\sigma (\xi_1,\xi_2)
=\sum_{ k_1,k_2 \in \Z^n }
V(k_1, k_2) 
\widetilde{\varphi}(\xi_{1}-k_{1})
\widetilde{\varphi}(\xi_{2}-k_{2}). 
\]
Obviously, we have
\begin{equation}\label{eqsigmatest}
\left| 
\partial^{\beta_1}_{\xi_1} \partial^{\beta_2}_{\xi_2} 
\sigma(\xi_1,\xi_2) 
\right|
\le C_{\beta_1,\beta_2}
\widetilde{V} (\xi_1, \xi_2).
\end{equation}
Take nonnegative functions $A, B$ on $\Z^n$ such that 
$A(\mu)=B(\mu)=0$ except for 
a finite number of $\mu \in \Z^n$, 
and define $f_j \in \calS(\R^n)$, $j=1,2$, by
\begin{align*}
f_1 (x)
=
\sum_{\nu_{1} \in \Z^n}
A(\nu_{1})
e^{i \nu_{1} \cdot x} 
(\calF^{-1}\varphi) (\lambda^{-1}x), \quad
f_2 (x)
=
\sum_{\nu_{2} \in \Z^n}
B(\nu_{2})
e^{i \nu_{2} \cdot x} 
(\calF^{-1}\varphi) (\lambda^{-1}x) 
\end{align*}
with $\lambda \in [1, \infty)$.

Since 
$\supp \varphi (\lambda (\cdot - \nu_1)) 
\subset \nu_1 + [-1/8, 1/8]^n$ for $\lambda \ge 1$, 
our choice of $\kappa$ implies  
\begin{equation*}
\kappa(\xi -k) \widehat{f_1}(\xi) 
=
\kappa (\xi - k)
\sum_{\nu_{1} \in \Z^n}
A(\nu_{1})\lambda^n \varphi(\lambda (\xi - \nu_{1})) 
=
A(k)\lambda^n \varphi(\lambda (\xi - k)).  
\end{equation*} 
Hence 
\[
\kappa(D-k) f_1(x)
=
A(k) e^{ix\cdot k} (\calF^{-1}\varphi) (\lambda^{-1}x) 
\]
and 
\begin{equation}\label{eqfjtest}
\begin{split}
&\|f_1\|_{W^{p_1,q_1}}
= 
\left\| \left\| 
\kappa(D-k) f_1(x)
 \right\|_{\ell^{q_1}_{k}} 
 \right\|_{L^{p_1}}
\\
&
=  
\left\| 
\left\|
A(k) e^{ix\cdot k} 
(\calF^{-1}\varphi) (\lambda^{-1}x) 
\right\|_{\ell^{q_1}_{k}} 
\right\|_{L^{p_1}_{x}} 
\approx 
\|A\|_{\ell^{q_1}(\Z^n)} 
\lambda^{n/p_1}.
\end{split}
\end{equation}
Similarly, 
\begin{align}\label{eqf2test}
\|f_2\|_{W^{p_2,q_2}}
\approx
\|B\|_{\ell^{q_1}(\Z^n)}
\lambda^{n/p_2}.
\end{align}
Since $\supp \varphi (\lambda (\cdot - \ell)) 
\subset \ell + [-1/8, 1/8]^n$ for $\lambda \ge 1$, 
our choice of 
$\widetilde\varphi$  
implies  
\begin{align*}
T_{\sigma}(f_1,f_2)(x)
&
=
\frac{1}{(2\pi)^n}
\int_{\R^n \times \R^n} 
e^{i x \cdot (\xi_1+ \xi_2)}
\sum_{ \nu_1, \nu_2 \in \Z^n }
V(\nu_1, \nu_2)
\\
&\qquad \times 
A(\nu_{1})B(\nu_{2}) 
\lambda^{n} \varphi (\lambda (\xi_1 - \nu_1))
\lambda^{n} \varphi (\lambda (\xi_2 - \nu_2))
\, 
d\xi_1 d\xi_2
\\
&
=
\sum_{ \nu_1, \nu_2 \in \Z^n }
V(\nu_1, \nu_2) A(\nu_{1})B(\nu_{2}) 
e^{i x \cdot (\nu_1 + \nu_2)}
(\calF^{-1}\varphi) (\lambda^{-1} x) ^2
\\
&
=\sum_{k}
d_k e^{i k \cdot x} 
 (\calF^{-1}\varphi) (\lambda^{-1} x) ^2,
\end{align*}
where
\[
d_k = 
\sum_{\nu_1,\nu_2\in\Z^n:\nu_1 + \nu_2 =k} 
V(\nu_1, \nu_2)
A(\nu_{1})B(\nu_{2}).
\]
Notice that 
$\supp (\varphi\ast\varphi) \subset 
[-1/4, 1/4]^n$.  
Hence, by the same reason as we obtained 
\eqref{eqfjtest}, 
we obtain 
\[
\kappa (D-k) 
\big[ T_{\sigma}(f_1,f_2) \big] (x)
=
d_k e^{i k \cdot x}
(\calF^{-1}\varphi) (\lambda^{-1} x)^2 
\]
and  
\begin{align}\label{eqTsigmatest}
\begin{split}
&\|T_{\sigma}(f_1,f_2)\|_{W^{p,q}}
= 
\left\| \left\| \kappa(D-k)
\left[ T_{\sigma}(f_1,f_2) \right](x) 
\right\|_{\ell^{q}_{k}} \right\|_{L^{p}_{x}}
\\
&=
\left\| \left\| 
d_k e^{ix\cdot k} (\calF^{-1}\varphi) (\lambda^{-1} x)^2 
\right\|_{\ell^{q}_{k}} 
\right\|_{L^{p}_{x}}
\approx
\|d_k\|_{\ell^{q}_{k}(\Z^n)} 
\lambda^{n/p}. 
\end{split}
\end{align}

The inequalities \eqref{eqinequality001} and \eqref{eqsigmatest} 
together with \eqref{eqfjtest}, \eqref{eqf2test}, and 
\eqref{eqTsigmatest} imply 
\[
\left\| 
d_k
\right\|_{\ell^q_k} 
\lambda^{n/p}
=
\left\| 
\sum_{\nu_1,\nu_2\in\Z^n:\nu_1 + \nu_2 =k} 
V(\nu_1, \nu_2)
A(\nu_{1})B(\nu_{2}) 
\right\|_{\ell^q_k} 
\lambda^{n/p} 
\lesssim 
\|A\|_{\ell^{q_1}} \| B \|_{\ell^{q_2}} 
\lambda^{n/p_1+ n/p_2}. 
\]
Thus, firstly taking $\lambda=1$ we obtain 
the inequality that implies 
$V \in \calB_{q_1,q_2,q}(\Z^n \times \Z^n)$.
Secondly, choosing 
$A\in \ell^{q_1}$ and $B\in \ell^{q_2}$ 
so that $d_k \not\equiv 0$, 
we obtain 
$\lambda^{n/p} = O ( 
\lambda^{n/p_1 + n/p_2})$ for $\lambda \ge 1$, 
which implies 
$1/p\le 1/p_1 + 1/p_2$. 
This completes the proof of the `only if' part of Theorem \ref{thmain-thm-WA}.  
\end{proof}

\subsection{Proof of Theorem \ref{thLp-calB}}
\label{proof-of-Lq-estimate}

Theorem \ref{thLp-calB} directly follows from the 
`if' part of  
Theorem \ref{thmain-thm-WA} and 
the embedding relations given in Proposition \ref{thWaembd}. 
In fact, 
suppose $p_1, p_2, p$ satisfy the assumptions of 
Theorem \ref{thLp-calB} and suppose 
\[
V \in \calBform_{\max (2, p_1^{\prime}), 
\max (2, p_2^{\prime}), \max (2, p) }
=
\calB_{
\max (2, p_1^{\prime}), 
\max (2, p_2^{\prime}), (\max (2, p))^{\prime} }. 
\]
Then 
the 
`if' part of 
Theorem \ref{thmain-thm-WA} 
implies 
\[
T_{\sigma}: W^{p_1,\max (2, p_1^{\prime})} 
\times W^{p_2, \max (2, p_2^{\prime})} 
\to W^{p,(\max (2,p))^{\prime}}. 
\] 
On the other hand, 
Proposition \ref{thWaembd} 
gives the embeddings 
\[ 
L^{p_1} \hookrightarrow W^{p_1, \max (2, p_1^{\prime})}, 
\quad   
L^{p_2} \hookrightarrow W^{p_2, \max (2, p_2^{\prime})}, 
\quad 
W^{p,(\max (2,p))^{\prime}} \hookrightarrow h^p.
\] 
Combining the above, 
we have $T_{\sigma}: L^{p_1}\times L^{p_2} \to h^{p}$. 
If $p_1=\infty$ (resp. $p_2=\infty$), 
then using the embedding 
$bmo \hookrightarrow W^{\infty,2}$ 
we can replace  
$L^{p_1}$ (resp. $L^{p_2}$) by $bmo$.

\subsection{Proof of Theorem \ref{thLp-diff-Lq}}
\label{proofofmain-cor_1}

To prove Theorem \ref{thLp-diff-Lq}, recall 
the 
Sobolev imbedding inequality with respect to $L^1$-norm: 
\begin{equation*}
|F(x)| 
\lesssim 
\sum_{|\beta|\le d} 
\int_{|x-z|<1} |\partial^{\beta} F (z)|\, dz   
\end{equation*} 
(see {\it e.g.\/} 
\cite[Chapter V, Section 6.4]{Stein_Diff}). 
Notice that this inequality obviousy implies 
\begin{equation}\label{eqSobolevL1}
|F(x)|\le 
c 
\sum_{|\beta|\le d} 
\int_{\R^d}
|\partial^{\beta}F (z)| \langle x-z \rangle ^{-d-1}\, dz, 
\quad x \in \R^d.   
\end{equation}

\begin{proof}[Proof of Theorem \ref{thLp-diff-Lq}] 
First 
we shall prove 
\eqref{eqLq-estimate-of-operator-norm} 
(the inequality 
with $L^q$-norm on the right hand side).  
Recall that $1\le q=q(p_1, p_2, p)\le 4$.  
Notice that Theorem \ref{thLp-calB} and 
Proposition \ref{thellq-ellqweak-calB} 
actually imply 
the 
following: 
there exist a constant 
$c$ and an integer $K^{\prime}$ 
depending only on 
$p_1, p_2, p$, and $n$ 
such that  
if $\sigma \in C^{\infty}((\R^n)^3)$, 
if $W$ is a moderate function on $\R^n \times \R^n$, 
and if 
\begin{equation}\label{eqdasigmaW}
\sum_{|\alpha|\le K^{\prime}}
\sup_{x\in \R^n}
|\partial_{x, \xi_1, \xi_2}^{\alpha} \sigma (x, \xi_1, \xi_2)|
\le 
W(\xi_1, \xi_2), 
\quad 
(\xi_1, \xi_2) \in \R^n \times \R^n, 
\end{equation}
then 
\begin{equation}\label{eqTsigmaW}
\|T_{\sigma}\|_{L^{p_1} \times L^{p_2} \to h^{p}}
\le 
c 
\| 
W\|_{L^{q} (\R^n \times \R^n)},  
\end{equation}
where 
$L^{p_1}$ (resp.\ $L^{p_2}$) can be 
replaced by $bmo$ 
if $p_1=\infty$ (resp.\ $p_2 = \infty$). 
By \eqref{eqSobolevL1}, 
we see that 
the inequality 
\eqref{eqdasigmaW} holds with 
\begin{equation}\label{eqW=dasigmaastangle}
W(\xi_1, \xi_2)
= c 
\sum_{\substack{
|\alpha|\le K^{\prime}
\\
|\beta|\le 2n
}}
\left(
\sup_{x \in \R^n} 
\left| 
\partial_{\xi_1, \xi_2}^{\beta}
\partial_{x, \xi_1, \xi_2}^{\alpha} 
\sigma (x, \xi_1, \xi_2)
\right|
\right)
\ast 
\langle (\xi_1, \xi_2)
\rangle ^{-2n-1}, 
\end{equation}
where $\ast$ denotes the convolution 
with respect to $(\xi_1, \xi_2)\in \R^n \times \R^n$. 
Here we assume 
$\sigma \not\equiv 0$ and 
the functions $\sup_{x}
|\partial_{\xi_1, \xi_2}^{\beta}
\partial_{x, \xi_1, \xi_2}^{\alpha} 
\sigma (x, \xi_1, \xi_2)|$ 
appearing in 
the right hand side of 
\eqref{eqW=dasigmaastangle} have 
finite $L^q$-norms. 
Then the function 
\eqref{eqW=dasigmaastangle} 
is a moderate function on $\R^n \times \R^n$ 
and 
\begin{equation*}
\|W\|_{L^{q} (\R^n \times \R^n)}
\lesssim 
\sum_{\substack{
|\alpha|\le K^{\prime}
\\
|\beta|\le 2n
}}
\left\|
\sup_{x \in \R^n} 
\left| 
\partial_{\xi_1, \xi_2}^{\beta}
\partial_{x, \xi_1, \xi_2}^{\alpha} 
\sigma (x, \xi_1, \xi_2)
\right|
\right\|_{L^q_{\xi_1, \xi_2} (\R^n \times\R^n)} 
\end{equation*}
by 
Minkowski's inequality (since $q\ge 1$).  
Hence \eqref{eqTsigmaW} yields the inequality 
\eqref{eqLq-estimate-of-operator-norm} with $K=K^{\prime}+2n$.

Next we prove 
\eqref{eqLq-estimate-of-operator-norm} 
with $L^q$-norm replaced by $L^{q, \infty}$-norm  
in the special cases 
(I), (II$\ast$), (III-1$\ast$), and (VI-1$\ast$).  
In these special cases,  
Theorem \ref{thLp-calB} and 
Proposition \ref{thellq-ellqweak-calB} imply that 
the estimate 
\eqref{eqTsigmaW} (with the same replacement of 
$L^{p_1}$ or $L^{p_2}$ in the cases $p_1$ or $p_2$ is infinity) 
holds with the $L^q$-norm of $W$ replaced by the 
$L^{q, \infty}$-norm. 
Notice that 
$2 \le q(p_1, p_2, p)\le 4$ in the special cases.  
Thus 
the proof  
is accomplished in almost the same way 
as above 
if we only use the fact that 
the inequality 
\[
\left\| 
V(\xi_1, \xi_2) \ast 
\langle (\xi_1, \xi_2)
\rangle ^{-2n-1}
\right\|_{L^{q, \infty} (\R^n \times \R^n)} 
\lesssim 
\left\| 
V(\xi_1, \xi_2)
\right\|_{L^{q, \infty} (\R^n \times \R^n)} 
\]
holds for $1<q< \infty$. 
This completes the proof of Theorem \ref{thLp-diff-Lq}.   
\end{proof}

\subsection{Proof of Corollary \ref{thLp-Ltildeq-GN}}

Under the assumptions 
of Corollary \ref{thLp-Ltildeq-GN}, 
Theorem \ref{thLp-diff-Lq} implies the 
estimate 
\begin{equation}\label{eqIneqfromCor17}
\| T_\sigma \|_{L^{p_1} \times L^{p_2} \to h^p}
\le c 
\sum_{|\alpha|\le K }
\left\| 
\sup_{x}
\left|
\partial^{\alpha}_{x, \xi_1, \xi_2}
\sigma(x,\xi_1,\xi_2) 
\right|
\right\|_{L^q_{\xi_1, \xi_2}(\R^{2n})}, 
\end{equation}
where 
$L^{p_1}$ (resp.\ $L^{p_2}$) can be 
replaced by $bmo$ 
if $p_1=\infty$ (resp.\ $p_2 = \infty$). 
Thus 
the claim of Corollary \ref{thLp-Ltildeq-GN} 
follows 
once we prove the inequality 
\begin{equation}\label{eqGagliardo-Nirenberg}
\begin{split}
&\sum_{|\alpha|\le K }
\left\| 
\sup_{x}
\left|
\partial^{\alpha}_{x, \xi_1, \xi_2}
\sigma(x,\xi_1,\xi_2) 
\right|
\right\|_{L^q_{\xi_1, \xi_2}(\R^{2n})}
\\
&
\le 
c 
\left\| 
\sup_{x}
| \sigma(x,\xi_1,\xi_2) |
\right\|_{L^{\widetilde{q}}_{\xi_1, \xi_2}(\R^{2n})}
^{\widetilde{q}/q}
\left( 
\sum_{|\alpha|\le N }
\left\|
\partial^{\alpha}_{x, \xi_1, \xi_2}
\sigma(x,\xi_1,\xi_2) 
\right\|_{L^{\infty}_{x, \xi_1, \xi_2}(\R^{3n})}
\right)^{1- \widetilde{q}/q}
\end{split}
\end{equation}
for $0<\widetilde{q}<q$ and for a sufficiently large integer $N>K$. 
This type of inequality 
is widely known as 
the Gagliardo--Nirenberg inequality. 
Notice, however, 
that the number 
$q=q(p_1, p_2, p)$ of Corollary \ref{thLp-Ltildeq-GN} 
(which always satisfies $1\le q \le 4$) 
can be equal to $1$. 
Thus we need \eqref{eqGagliardo-Nirenberg} including the case 
$\widetilde{q}<1=q$, 
which might not be well known. 
We shall give a proof 
of \eqref{eqGagliardo-Nirenberg} including the case 
$\widetilde{q}<1$ in Appendix.

A different method to prove 
Corollary \ref{thLp-Ltildeq-GN} that works 
in the case $1\le \widetilde{q}<q$ 
can be found in \cite[Section 4.6]{KMT-arxiv}.

\subsection{Sharpness of $q$ in 
Remark \ref{thHoldercase}. }
\label{sharpness-q}

In this subsection, 
we shall prove that in the case $1/p_1 + 1/p_2=1/p$ 
the number $q=q(p_1, p_2, p)$ in 
Theorem \ref{thLp-diff-Lq} 
is sharp in the sense that 
the estimate \eqref{eqLq-estimate-of-operator-norm} 
does not hold for 
$q> q(p_1, p_2, p)$.

For this purpose, we consider a multiplier of a special form. 
For a function $\Phi$ satisfying 
\begin{equation}\label{eqPhi}
\Phi \in \calS (\R^{2n}), 
\quad 
\supp \Phi \subset 
\{\zeta \in \R^{2n} :  |\zeta|\le 1/20\}
\end{equation}
and for a finite set $E \subset \Z^n \times \Z^n$,  
we define 
\begin{equation}\label{eqbumpmultiplier}
m_{E, \Phi}(\xi_1, \xi_2)
=
\sum_{(\nu_1,\nu_2)\in E} \Phi(\xi_1-\nu_1, \xi_2-\nu_2).
\end{equation}
Obviously this multiplier 
satisfies the estimate 
\[
\left| 
\partial_{\xi_1}^{\alpha_1} \partial_{\xi_2}^{\alpha_2}
m_{E, \Phi}(\xi_1, \xi_2)
\right|
\le C_{\Phi,\alpha_1, \alpha_2} 
\sum_{(\nu_1,\nu_2)\in E} 
\ichi_{Q}(\xi_1-\nu_1) 
\ichi_{Q}(\xi_2-\nu_2). 
\]
Thus 
Theorem 
\ref{thLp-diff-Lq} 
implies 
that the estimate 
\begin{equation}\label{eqestimateTmEPhi}
\| T_{m_{E,\Phi}} \|_{L^{p_1} \times L^{p_2} \to L^p} 
\le C_{\Phi} 
(\card E)^{1/q}
\end{equation}
holds with 
$q=q(p_1, p_2, p)$. 
The following proposition, 
which is essentially due to 
Buri\'ankov\'a--Grafakos--He--Honz\'ik 
\cite{BGHH}, 
asserts that even 
\eqref{eqestimateTmEPhi} 
for this simple operator 
$ T_{m_{E,\Phi}} $ 
does not hold for 
$q>q(p_1, p_2, p)$.

\begin{prop}[\cite{BGHH}]\label{thsharpofq}
Let $p_1,p_2 \in [1,\infty]$, $1/p_1+1/p_2=1/p$,
and $0 < q \le \infty$.
If 
the estimate \eqref{eqestimateTmEPhi}
holds for every 
$\Phi$ satisfying \eqref{eqPhi},  
then $q\le q(p_1, p_2, p)$. 
\end{prop}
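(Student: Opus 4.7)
The plan is to prove the contrapositive. Assuming $q>q(p_1, p_2, p)$, it suffices to exhibit a single $\Phi$ satisfying \eqref{eqPhi} and a sequence of finite sets $E_N\subset\Z^n\times\Z^n$ with $\card E_N\to\infty$ such that
\begin{equation*}
\| T_{m_{E_N, \Phi}}\|_{L^{p_1}\times L^{p_2}\to L^p}\gtrsim (\card E_N)^{1/q(p_1, p_2, p)},
\end{equation*}
since this directly contradicts \eqref{eqestimateTmEPhi}. The construction follows the approach of \cite{BGHH}, adapted to the range $p\le 1$.

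Fix a tensor symbol $\Phi(\xi_1,\xi_2)=\phi(\xi_1)\phi(\xi_2)$ with $\phi\in\calS(\R^n)$, $\supp\phi\subset\{|\xi|<1/40\}$, and $\phi(0)\ne 0$, and fix a Schwartz bump $\psi$ with $\hat\psi$ supported in $\{|\xi|<1/100\}$ and $\hat\psi(0)=1$. For finite sets $A_1, A_2\subset\Z^n$ and coefficients $\{a^{(j)}_\nu\}_{\nu\in A_j}$, set
\begin{equation*}
f_j(x) = \psi(x)\sum_{\nu\in A_j} a^{(j)}_\nu e^{i\nu\cdot x}, \quad j=1,2.
\end{equation*}
The separation properties of $\phi$ and $\hat\psi$ ensure that the only interactions surviving in the double integral defining $T_{m_{E,\Phi}}(f_1,f_2)$ are those indexed by $(\nu_1,\nu_2)\in E\cap(A_1\times A_2)$, yielding
\begin{equation*}
T_{m_{E,\Phi}}(f_1,f_2)(x) = G(x)\sum_{(\nu_1,\nu_2)\in E\cap(A_1\times A_2)} a^{(1)}_{\nu_1} a^{(2)}_{\nu_2}\, e^{ix\cdot(\nu_1+\nu_2)},
\end{equation*}
for a fixed nonzero Schwartz function $G = T_\Phi(\psi,\psi)$ independent of $E$, $A_j$ and the coefficients. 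Thus the problem reduces to choosing $E_N$, $A_j$, and $\{a^{(j)}_\nu\}$ so that the ratio $\| T_{m_{E_N,\Phi}}(f_1,f_2)\|_{L^p}/\bigl(\| f_1\|_{L^{p_1}}\| f_2\|_{L^{p_2}}\bigr)$ grows like $(\card E_N)^{1/q(p_1,p_2,p)}$.

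The selection is made case by case, following the partition in Remark \ref{thHoldercase}. In Case (1) (where $p_1, p_2\ge 2$ and $p\le 2$) the coefficients are independent Rademacher signs and $E_N$ is a graph $\{(\nu,\mu-\nu): \nu\in A_N\}$ over a lattice cube $A_N$; Kahane's $L^{p_j}$-valued Khintchine inequality (valid for every $0<p<\infty$) applied iteratively to input and output reduces all three norms to computable square functions and produces the exponent $1/q=1/4$. Cases (3)--(4) combine a Rademacher factor on the ``large-$p_j$'' side with a Dirichlet-kernel factor on the ``small-$p_j$'' side, whose $L^{p_j}$ asymptotics are explicit and yield $2p_i$ as in the formula for $q(p_1, p_2, p)$. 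Case (2) uses two Dirichlet factors. For the critical Cases (5)--(6) at $1/p_1+1/p_2\ge 3/2$ one takes a bilinear analogue of the counter-example from the proof of Proposition \ref{thcounterexBp1p22}, choosing $A_j$ and $\{a^{(j)}_\nu\}$ so that the resulting sum saturates the logarithmic divergence obtained there.

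The main obstacle is the case analysis together with the matching of exponents exactly at the boundaries $1/p_j=1/2$ and $1/p_1+1/p_2=3/2$, and the adaptation to the quasi-Banach regime $p<1$; here one relies on the Khintchine--Kahane inequality in $L^p$ for $0<p<\infty$ and on $L^p$ asymptotics of Dirichlet kernels, both of which remain effective for arbitrarily small $p>0$.
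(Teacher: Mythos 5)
The paper's proof and your proposal are genuinely different. The paper invokes \cite[Section 6]{BGHH} as a black box for $1\le p_1,p_2<\infty$ and then extends to $p_1=\infty$ or $p_2=\infty$ by a short bilinear interpolation argument: assuming \eqref{eqestimateTmEPhi} holds at the endpoint with some $q>q(p_1,p_2,p)$, interpolating against the known estimate $\|T_{m_{E,\Phi}}\|_{L^2\times L^2\to L^1}\lesssim(\card E)^{1/4}$ (or $L^a\times L^b\to L^2$) produces an intermediate point $(\widetilde p_1,\widetilde p_2,\widetilde p)$ with both finite exponents where the estimate holds with $\widetilde q > q(\widetilde p_1,\widetilde p_2,\widetilde p)$, contradicting \cite{BGHH}. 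You instead attempt a direct construction of extremizing $E_N$ and inputs $f_1,f_2$ over the full range of $(p_1,p_2)$, which amounts to reproving the cited part of \cite{BGHH} plus the endpoints from scratch.

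The structural reduction you set up (tensor $\Phi=\phi\otimes\phi$, modulated Schwartz inputs, and the identity $T_{m_{E,\Phi}}(f_1,f_2)=G\cdot\sum_{(\nu_1,\nu_2)\in E\cap(A_1\times A_2)}a^{(1)}_{\nu_1}a^{(2)}_{\nu_2}e^{ix\cdot(\nu_1+\nu_2)}$) is correct. But the construction you describe in Case (1) cannot work. With $E=\{(\nu,\mu-\nu):\nu\in A_N\}$, every pair sums to the same $\mu$, so the output is a \emph{single} modulated bump $c\,G(x)e^{ix\cdot\mu}$ with $c=\sum_{\nu\in A_N}a^{(1)}_\nu a^{(2)}_{\mu-\nu}$, and $\|T_{m_{E,\Phi}}(f_1,f_2)\|_{L^p}=|c|\,\|G\|_{L^p}$ regardless of the size of $E$. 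Since $\psi$ is essentially supported on a unit ball, for $p_1,p_2\ge2$ one has $\|f_j\|_{L^2}\lesssim\|f_j\|_{L^{p_j}}$, and Cauchy--Schwarz gives $|c|\le\|a^{(1)}\|_{\ell^2}\|a^{(2)}\|_{\ell^2}\approx\|f_1\|_{L^2}\|f_2\|_{L^2}\lesssim\|f_1\|_{L^{p_1}}\|f_2\|_{L^{p_2}}$, so the ratio is $O(1)$ and the claimed $(\card E)^{1/4}$ lower bound is unreachable with this $E$ for \emph{any} choice of coefficients, random or not. More generally, any graph-type $E$ with injective $\nu\mapsto\nu+g(\nu)$ and Rademacher inputs gives a ratio $O(1)$, not $(\card E)^{1/4}$; the extremal $E$ for Case (1) must carry additional additive structure (as in \cite{BGHH}). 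Moreover, the endpoint cases $p_1=\infty$ or $p_2=\infty$ are not covered by \cite{BGHH} and are not genuinely addressed: controlling $\|f_1\|_{L^\infty}$ rather than $\|f_1\|_{L^{p_1}}$ with $p_1<\infty$ changes the normalization substantially (a Dirichlet input has $\|f_1\|_{L^\infty}\approx\card A$, not $(\card A)^{1/2}$), which is precisely the difficulty the paper's interpolation argument is designed to bypass. As written, the proposal has a concrete gap in Case (1) and insufficient detail elsewhere to constitute a proof.
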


\begin{proof}  
In fact, 
the claim of the proposition  
is proved in 
\cite[Section 6]{BGHH} 
in the case 
$1\le p_1, p_2 < \infty$. 
We shall prove that the claim also holds 
for $p_1=\infty$ or $p_2=\infty$. 
By symmetry, it is sufficient to consider the case 
$p_1=\infty$.

First consider the case $p_1=\infty$ and 
$p_2=p \in [1, 2]$. 
In this case $q(p_1, p_2, p)=2p_2$. 
Assume that the estimate 
\eqref{eqestimateTmEPhi} hold  
with $q=2p_2 + \epsilon$ for some positive $\epsilon$. 
By Theorem \ref{thLp-diff-Lq}, we know that the estimate 
$
\| T_{m_{E,\Phi}} \|_{
L^{2} \times L^{2} \to L^{1}
} 
\le C_{\Phi} 
(\card E)^{1/4}
$ 
holds. 
Hence by interpolation we have the estimate 
$
\| T_{m_{E,\Phi}} \|_{
L^{\widetilde{p_1}} \times L^{\widetilde{p_2}} \to L^{\widetilde{p}}
} 
\le C_{\Phi} 
(\card E)^{1/\widetilde{q}}
$
with 
\[
\left( {1}/{\widetilde{p_1}}, {1}/{\widetilde{p_2}}, 
{1}/{\widetilde{p}}, {1}/{\widetilde{q}} \right)
=
(1-\theta)
\left( {1}/{2}, {1}/{2}, {1}, {1}/{4} \right)
+\theta
\left( 0, {1}/{p_2}, {1}/{p_2}, {1}/{(2p_2+\epsilon)} \right), \quad
0 < \theta < 1.
\]
Notice that $2<\widetilde{p_1}<\infty$, 
$1 < \widetilde{p_2} \le 2$, and 
$\widetilde{q} > 2\widetilde{p_2}
=q(\widetilde{p_1}, \widetilde{p_2}, \widetilde{p})$. 
But this is impossible by the reuslt of 
\cite{BGHH} mentioned above. 
Therefore, for the estimate 
\eqref{eqestimateTmEPhi} 
the condition $q \le 2p_2 = q(\infty, p_2,p_2)$
is necessary.

Next consider the case $p_1=\infty$ and $p_2=p \in [2, \infty]$. 
In this case $q(p_1, p_2, p)=2p^{\prime}=2 (1- 1/p_2)^{-1}$.  
We use the same interpolation argument with
$\|  T_{m_{E, \Phi}} \|_{L^{a} \times L^{b} \to L^{2}}
\lesssim (\card E)^{1/4}$ 
for $2 \le a,b < \infty$ and $1/a+1/b=1/2$,
which is also proved in Theorem \ref{thLp-diff-Lq}.
The remaining argument is the same as above.
This completes the proof. 
\end{proof}

\appendix\section{}\label{sectionappendix}

Here we shall give a proof of the 
inequality \eqref{eqGagliardo-Nirenberg}. 
This inequality will be given in 
Lemma \ref{th04} below, 
which will cover a slightly more general case.

We begin with the following elementary lemma.

\begin{lem}\label{thVandermonde}
Let $N\ge 2$ be an integer,  
$\lambda, z\in \R$, and assume $\lambda \neq 0$.  
Then 
there exist 
polynomials $P_{k,j}$, 
$k, j \in \{ 0, 1, \dots, N-1\}$,  
of 
$N$ variables 
such that
\begin{equation}\label{eqPjk-01}
\lambda^{-\frac{N(N-1)}{2}} 
\sum_{k=0}^{N-1}
(\lambda k + z)^{j^{\prime}} 
P_{k,j} (z, \lambda + z, \dots, (N-1) \lambda  + z)
=
\begin{cases}
{1} & {\; {\text{if}} \;\; j=j^{\prime},  }\\ 
{0} & {\; {\text{if}} \;\; j\neq j^{\prime} }  
\end{cases}
\end{equation}
for 
$j, j^{\prime}\in \{0, 1, \dots, N-1\}$ 
and 
\begin{equation}\label{eqPjk-estimate}
\left| 
P_{k,j} (z, \lambda + z, \dots, (N-1) \lambda  + z)\right|
\le c 
(|z|+ |\lambda|)^{\frac{N(N-1)}{2}- j}
\end{equation}
with a constant $c$ that 
depends only on $N$. 
\end{lem}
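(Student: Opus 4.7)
The plan is to recognize \eqref{eqPjk-01} as Vandermonde inversion. Introduce formal variables $w_0, w_1, \dots, w_{N-1}$ and consider the Vandermonde matrix $V(w)$ with entries $V_{j', k}(w) = w_k^{j'}$ for $j', k \in \{0, 1, \dots, N-1\}$. Its determinant is the standard identity $\det V(w) = \prod_{0 \le k' < k \le N-1}(w_k - w_{k'})$, and when we specialize to $w_m = \lambda m + z$ this becomes $\lambda^{N(N-1)/2} \Delta$, where $\Delta = \prod_{k' < k}(k - k')$ is a nonzero integer constant depending only on $N$.

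First I would define
\[
P_{k, j}(w_0, \dots, w_{N-1}) = \frac{\mathrm{cof}_{j, k}(V(w))}{\Delta},
\]
where $\mathrm{cof}_{j, k}$ denotes the $(j, k)$-cofactor of $V(w)$. Since $\mathrm{cof}_{j, k}$ is itself a polynomial in $w_0, \dots, w_{N-1}$ with integer coefficients, this gives a well-defined polynomial in $N$ variables. To verify \eqref{eqPjk-01}, apply Cramer's rule to obtain
\[
\sum_{k=0}^{N-1} V_{j',k}(w)\,\mathrm{cof}_{j, k}(V(w)) = \delta_{j, j'}\,\det V(w).
\]
Substituting $w_m = \lambda m + z$ and dividing by $\lambda^{N(N-1)/2}\Delta$ yields \eqref{eqPjk-01}.

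For the estimate \eqref{eqPjk-estimate}, expand $\mathrm{cof}_{j, k}(V(w))$ as a signed sum over bijections $\pi$ from $\{0, \dots, N-1\} \setminus \{j\}$ to $\{0, \dots, N-1\} \setminus \{k\}$:
\[
\mathrm{cof}_{j, k}(V(w)) = \sum_{\pi} \mathrm{sgn}(\pi) \prod_{j' \neq j} w_{\pi(j')}^{j'}.
\]
Each term is a monomial of total degree $\sum_{j' \neq j} j' = N(N-1)/2 - j$ in the $w_m$'s. Substituting $w_m = \lambda m + z$ yields $|w_m| \le (N-1)|\lambda| + |z| \lesssim_N |\lambda| + |z|$, so each monomial is bounded by $c(|z| + |\lambda|)^{N(N-1)/2 - j}$; summing the finitely many terms gives \eqref{eqPjk-estimate}.

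No step here is a genuine obstacle; the whole statement is a packaging of Vandermonde inversion together with an elementary degree count. The only thing to be careful about is that the $\lambda^{N(N-1)/2}$ factor from $\det V$ exactly cancels the $\lambda^{-N(N-1)/2}$ prefactor on the left of \eqref{eqPjk-01}, so that the resulting $P_{k, j}$ are honest polynomials in the $w_m$'s (with no inverse power of $\lambda$ present), which is what makes the uniform bound \eqref{eqPjk-estimate} meaningful across all $\lambda \neq 0$.
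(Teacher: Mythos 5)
Your proof is correct and is essentially identical to the paper's: both recognize \eqref{eqPjk-01} as Vandermonde inversion, define $P_{k,j}$ via the $(j,k)$-cofactor divided by the constant $\Delta = \prod_{k>k'}(k-k')$, verify the identity through Cramer's rule and the evaluation $\det V = \lambda^{N(N-1)/2}\Delta$, and obtain \eqref{eqPjk-estimate} from the fact that the cofactor is homogeneous of degree $\tfrac{N(N-1)}{2}-j$ in $w_0,\dots,w_{N-1}$. The only cosmetic difference is that you spell out the permutation-sum expansion of the cofactor to justify the degree count, whereas the paper simply asserts the homogeneity.
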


\begin{proof}
We shall index the rows and columns of an $N\times N$ matrix by 
$0, 1, \dots, N-1$. 
Let $A$ be the $N\times N$ matrix with 
the $(j,k)$ entry equal to 
$(\lambda k + z)^{j} $, 
$j, k\in \{0, 1, \dots, N-1\}$. 
The determinant of $A$ is 
Vandermonde's determinant 
and is given by 
\[
\det A = 
\prod_{N-1\ge k>k^{\prime}\ge 0}
( (\lambda k + z) - (\lambda k^{\prime}  + z) )
=
\lambda^{\frac{N(N-1)}{2}} 
\prod_{N-1\ge k>k^{\prime}\ge 0}
( k- k^{\prime} ). 
\]
Thus 
$A$ has the inverse matrix when $\lambda \neq 0$.  
Let $A_{j,k}$ be the 
cofactor 
of $(\lambda k + z)^{j}$ 
in $A$. 
Then the  
$(k,j)$ entry of the inverse matrix $A^{-1}$ is given by 
\begin{equation*}
(\det A)^{-1} A_{j,k}
=
\lambda^{-\frac{N(N-1)}{2}}
\bigg( \prod_{N-1\ge k>k^{\prime}\ge 0}
( k- k^{\prime} )
\bigg)^{-1}
A_{j,k}.  
\end{equation*}
Observe that the cofactor 
$A_{j,k}$ is 
a polynomial in 
the $N$ variables 
$z, \lambda  + z, \dots, (N-1)\lambda + z$
that is homogeneous of degree 	
$\frac{N(N-1)}{2}- j$. 
Thus 
\begin{align*}
&P_{k,j}(z, \lambda  + z, \dots, (N-1)\lambda + z)
\\
&=
 \bigg( \prod_{N-1\ge k>k^{\prime}\ge 0}
( k- k^{\prime} )
\bigg)^{-1}
A_{j,k}(z, \lambda  + z, \dots, (N-1)\lambda + z)
\end{align*}
satisfies \eqref{eqPjk-01}. 
The estimate 
\eqref{eqPjk-estimate} 
is obvious since 
$P_{k, j}$ is a homogeneous polynomial of degree 
$\frac{N(N-1)}{2}- j$. 
\end{proof}

Let $d, N\in \N$ and $N\ge 2$. 
For 
multi-indices $\beta, \gamma \in (\N_0)^d$ satisfying 
\begin{equation*}
0\le \beta_j, \gamma_j \le N-1, 
\quad 
j=1, \dots, d, 
\end{equation*}
and 
for 
$\lambda \in \R \setminus \{0\}$ and 
$z=(z_1, \dots, z_d)\in \R^d$, 
we define 
\[
\widetilde{P}_{\beta, \gamma}
(N, \lambda, z) 
=
\prod_{j=1}^{d} 
P_{\beta_j, \gamma_j}(z_j, \lambda + z_j, \dots, (N-1) \lambda + z_j), 
\]
where 
$P_{k,j}$ are the polynomials given in 
Lemma \ref{thVandermonde}.

Then Lemma \ref{thVandermonde} implies the following.

\begin{lem}\label{thd-Vandermonde}
Let $d, N\in \N$ and $N\ge 2$. 
Then for multi-indices 
$\alpha, \beta, \gamma\in (\N_0)^{d}$ 
satisfying 
$0\le \alpha_j, \beta_j, \gamma_j \le N-1$ 
($j=1, \dots, d$)
and for 
$\lambda \in \R\setminus \{0\}$ and 
$z\in \R^d$, 
we have 
\begin{equation}\label{eqtildePab-01}
\lambda^{-\frac{N(N-1)d}{2}} 
\sum_{\beta}
(\lambda \beta + z)^{\alpha} 
\widetilde{P}_{\beta, \gamma}
(N, \lambda, z) 
=
\begin{cases}
{1} & {\; {\text{if}} \;\; \alpha=\gamma, }\\ 
{0} & {\; \text{if} \;\; \alpha \neq \gamma}
\end{cases}
\end{equation}
and 
\begin{equation}\label{eqtildePab-estimate}
\left| 
\widetilde{P}_{\beta, \gamma}
(N, \lambda, z) \right|
\le 
c (|z|+ |\lambda|)^{\frac{N(N-1)d}{2}-|\gamma|}
\end{equation}
with 
a constant 
$c$ depending only on 
$d$ and $N$. 
\end{lem}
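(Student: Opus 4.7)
The strategy is to exploit the product structure of $\widetilde{P}_{\beta,\gamma}$, which is tailored so that both the identity and the bound tensorize over the $d$ coordinates. I will split the proof into two short parts, mirroring the two conclusions of the lemma.

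For the identity \eqref{eqtildePab-01}, I would rewrite the sum over $\beta \in \{0,\ldots,N-1\}^d$ as the iterated sum $\sum_{\beta_1=0}^{N-1}\cdots\sum_{\beta_d=0}^{N-1}$. Using the multi-index expansion $(\lambda\beta + z)^{\alpha} = \prod_{j=1}^{d}(\lambda\beta_j + z_j)^{\alpha_j}$ together with the product form of $\widetilde{P}_{\beta,\gamma}$, the left-hand side factors as
\[
\prod_{j=1}^{d}\biggl\{\lambda^{-N(N-1)/2}\sum_{\beta_j=0}^{N-1}(\lambda\beta_j + z_j)^{\alpha_j}\, P_{\beta_j,\gamma_j}\bigl(z_j,\lambda+z_j,\ldots,(N-1)\lambda+z_j\bigr)\biggr\}.
\]
Applying Lemma \ref{thVandermonde} in the $j$-th coordinate identifies each factor with the Kronecker delta $\delta_{\alpha_j,\gamma_j}$. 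Hence the full product equals $1$ if and only if $\alpha_j = \gamma_j$ for every $j$, i.e.\ $\alpha=\gamma$, and vanishes otherwise; this is exactly \eqref{eqtildePab-01}.

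For the bound \eqref{eqtildePab-estimate}, I would apply the one-dimensional estimate \eqref{eqPjk-estimate} coordinate-wise to get $|P_{\beta_j,\gamma_j}(z_j,\lambda+z_j,\ldots,(N-1)\lambda+z_j)| \le c\,(|z_j|+|\lambda|)^{N(N-1)/2 - \gamma_j}$. Using $|z_j| \le |z|$, each factor is controlled by $c\,(|z|+|\lambda|)^{N(N-1)/2 - \gamma_j}$, and taking the product over $j=1,\ldots,d$ gives
\[
|\widetilde{P}_{\beta,\gamma}(N,\lambda,z)| \le c^{d}(|z|+|\lambda|)^{\sum_{j=1}^{d}(N(N-1)/2 - \gamma_j)} = c^{d}(|z|+|\lambda|)^{N(N-1)d/2 - |\gamma|},
\]
which is the claimed bound with a constant depending only on $d$ and $N$.

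There is no genuine obstacle here: the entire lemma is a formal tensorization of Lemma \ref{thVandermonde}, and both conclusions reduce to one-line computations once the product structure of $\widetilde{P}_{\beta,\gamma}$ is recognized. The only thing that requires care is multi-index bookkeeping, specifically that the separability in the $d$ coordinates lets the exponents $\tfrac{N(N-1)d}{2}$ and $|\gamma|=\sum_{j}\gamma_j$ emerge cleanly from the product, with no cross terms to control.
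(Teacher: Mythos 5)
Your proof is correct and is exactly the tensorization the paper has in mind (the paper simply states ``Then Lemma \ref{thVandermonde} implies the following'' without spelling out the details). Both the factorization of the sum over $\beta$ and the coordinate-wise application of the estimate \eqref{eqPjk-estimate}, together with the observation that $\tfrac{N(N-1)}{2}-\gamma_j\ge 0$ so that $|z_j|\le|z|$ may be used, are the intended argument.
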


We write $M_{1}$ to denote the usual Hardy-Littlewood maximal operator 
and define 
$M_{\epsilon}(f) = M_{1}(|f|^{\epsilon})^{1/\epsilon}$ for $\epsilon>0$.

\begin{lem}\label{thlogconvexMeM1} 
Let $d, N\in \N$ and $N\ge 2$. 
Let $f\in C^{N}(\R^d)$ and let $\gamma \in (\N_0)^d$ 
be a multi-index satisfying 
$0<|\gamma|<N$. 
Then, for each $0< \epsilon \le 1$, 
there exists a constant 
$c$ depending only on 
$d, N, \epsilon$, 
such that the inequality 
\begin{equation*}
\left| \partial^{\gamma} f (y) \right|
\le 
c 
\left[ M_{\epsilon} (f) (y) \right]^{(N-|\gamma|)/N}
\bigg[ 
\sum_{|\alpha|=N} 
M_{1} ( |\partial^{\alpha}f| ) (y) 
\bigg]^{|\gamma|/N}
\end{equation*}
holds 
for 
all $y\in \R^d$ satisfying 
$M_{\epsilon} (f) (y) <\infty$ and 
$\sum_{|\alpha|=N} M_{1} (  |\partial^{\alpha}f|  ) (y) <\infty$. 
\end{lem}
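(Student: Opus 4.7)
The plan is to combine a Taylor expansion of $f$ at $y$ with the algebraic identity of Lemma \ref{thd-Vandermonde} so as to express $\partial^{\gamma}f(y)$ as a linear combination of values of $f$ plus an explicit remainder controlled by $N$-th derivatives, and then to optimize in a scaling parameter $\lambda>0$. Fix $y\in\R^d$ and $\lambda>0$. For each $\beta\in\{0,\ldots,N-1\}^d$ and $z\in[0,\lambda]^d$, Taylor's formula with integral remainder yields
\begin{equation*}
f(y+\lambda\beta+z)=\sum_{|\alpha|<N}\frac{\partial^{\alpha}f(y)}{\alpha!}(\lambda\beta+z)^{\alpha}+R_{\beta}(z),
\end{equation*}
where $R_{\beta}(z)=N\sum_{|\alpha|=N}\frac{(\lambda\beta+z)^{\alpha}}{\alpha!}\int_{0}^{1}(1-t)^{N-1}\partial^{\alpha}f(y+t(\lambda\beta+z))\,dt$. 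Since $|\alpha|<N$ forces $\alpha_j\le N-1$ for every $j$, the orthogonality relation \eqref{eqtildePab-01}, applied with the given $\gamma$ (which by assumption lies in $\{0,\ldots,N-1\}^d$), eliminates every monomial in the Taylor polynomial except the one with $\alpha=\gamma$ after multiplying by $\widetilde{P}_{\beta,\gamma}(N,\lambda,z)$ and summing over $\beta$. This produces the identity
\begin{equation*}
\frac{\partial^{\gamma}f(y)}{\gamma!}=\lambda^{-\frac{Nd(N-1)}{2}}\sum_{\beta}\widetilde{P}_{\beta,\gamma}(N,\lambda,z)\,f(y+\lambda\beta+z)-E(z),
\end{equation*}
with $E(z)$ the corresponding linear combination of the remainders $R_{\beta}(z)$.

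Next I will raise this identity to the $\epsilon$-th power, using the subadditivity $(a+b)^{\epsilon}\le a^{\epsilon}+b^{\epsilon}$ valid for $0<\epsilon\le 1$, together with the polynomial bound \eqref{eqPjk-estimate}, which for $z\in[0,\lambda]^d$ gives $|\widetilde{P}_{\beta,\gamma}(N,\lambda,z)|\lesssim\lambda^{Nd(N-1)/2-|\gamma|}$. This yields
\begin{equation*}
|\partial^{\gamma}f(y)|^{\epsilon}\lesssim\lambda^{-|\gamma|\epsilon}\sum_{\beta}|f(y+\lambda\beta+z)|^{\epsilon}+|E(z)|^{\epsilon}.
\end{equation*}
Averaging over $z\in[0,\lambda]^d$, each cube $y+\lambda\beta+[0,\lambda]^d$ is contained in a ball about $y$ of radius $\lesssim\lambda$ and comparable measure, so the main term is bounded by $C\,M_{1}(|f|^{\epsilon})(y)=C[M_{\epsilon}f(y)]^{\epsilon}$.

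The technical heart of the argument is the averaged error term. From $|R_{\beta}(z)|\lesssim\lambda^{N}\sum_{|\alpha|=N}\int_{0}^{1}|\partial^{\alpha}f(y+t(\lambda\beta+z))|\,dt$ and the polynomial estimate, I obtain $|E(z)|\lesssim\lambda^{N-|\gamma|}\sum_{\beta,\alpha}\int_{0}^{1}|\partial^{\alpha}f(y+t(\lambda\beta+z))|\,dt$. After raising to $\epsilon$, I apply Jensen in the form $\lambda^{-d}\int h(z)^{\epsilon}\,dz\le(\lambda^{-d}\int h(z)\,dz)^{\epsilon}$ (concavity of $x^{\epsilon}$ for $\epsilon\le 1$) to pull the $\epsilon$-power outside the $z$-integral; Fubini then lets me change variables $u=t(\lambda\beta+z)$ for each fixed $t\in(0,1]$. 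Since $t\lambda\beta+t[0,\lambda]^d\subset B(0,c_{N,d}t\lambda)$, the $u$-integral is bounded by $(t\lambda)^{d}M_{1}(|\partial^{\alpha}f|)(y)$, and the Jacobian factor $t^{-d}$ cancels $t^{d}$ uniformly in $t$. The outcome is $\lambda^{-d}\int_{[0,\lambda]^d}|E(z)|^{\epsilon}\,dz\lesssim\lambda^{(N-|\gamma|)\epsilon}A(y)^{\epsilon}$, with $A(y)=\sum_{|\alpha|=N}M_{1}(|\partial^{\alpha}f|)(y)$, where I have used the trivial bound $\sum_{\alpha}b_{\alpha}^{\epsilon}\lesssim(\sum_{\alpha}b_{\alpha})^{\epsilon}$ coming from the finiteness of the number of multi-indices of length $N$.

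Collecting these bounds and taking the $1/\epsilon$-th power (using $(a+b)^{1/\epsilon}\lesssim a^{1/\epsilon}+b^{1/\epsilon}$) gives, for every $\lambda>0$,
\begin{equation*}
|\partial^{\gamma}f(y)|\le C\lambda^{-|\gamma|}M_{\epsilon}f(y)+C\lambda^{N-|\gamma|}A(y).
\end{equation*}
Optimizing by setting $\lambda^{N}=M_{\epsilon}f(y)/A(y)$ when both quantities are strictly positive and finite produces the claimed inequality; the degenerate cases $M_{\epsilon}f(y)=0$ or $A(y)=0$ follow by letting $\lambda\to 0$ or $\lambda\to\infty$. I expect the main obstacle to be the error-term analysis in the range $\epsilon<1$, because the naive move of pushing the $\epsilon$-power inside the $t$-integral would require Jensen in the wrong direction; the remedy is to average in $z$ first and apply Jensen only afterwards, which forces careful bookkeeping of the $t$-dependent Jacobian to ensure the estimate is uniform in $t\in(0,1]$.
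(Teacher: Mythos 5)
Your proof is correct and follows essentially the same route as the paper: Taylor's formula combined with the Vandermonde-type annihilation identity from Lemma \ref{thd-Vandermonde} to isolate $\partial^{\gamma}f(y)$, pointwise $\epsilon$-subadditivity and the polynomial bound \eqref{eqtildePab-estimate} to estimate after averaging in $z$ over a $\lambda$-scale set, Jensen/Hölder (valid since $\epsilon\le 1$) to control the remainder in terms of $M_{1}$ of the $N$-th derivatives, and finally optimization over $\lambda$. The only cosmetic difference is averaging over $[0,\lambda]^{d}$ instead of $\{|z|<\lambda\}$, which is immaterial.
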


\begin{proof}
Take an arbitrary $\lambda \in (0, \infty)$. 
Taylor's formula gives 
\begin{align*}
&
\sum_{|\alpha|\le N-1} 
\frac{\partial^{\alpha}f (y)}{\alpha !} z^{\alpha} 
=f(y+z) 
-
R_{N}(y,z), 
\\
&
R_{N}(y,z)
=
\sum_{|\alpha|=N} 
\int_{0}^{1} N (1-\theta)^{N-1} 
\frac{(\partial^{\alpha}f)
\big(y + \theta z\big)}{\alpha !}
z^{\alpha}\, d\theta. 
\end{align*}
In this formula, replace 
$z$ by $\lambda \beta + z$, 
multiply 
$\lambda^{-\frac{N(N-1)d}{2}} \widetilde{P}_{\beta,\gamma} (N, \lambda, z)$, 
and take sum over $\beta$. 
Then 
\eqref{eqtildePab-01} yields 
\begin{align*}
\frac{\partial^{\gamma}f(y)}{\gamma !}
&
=
\lambda^{-\frac{N(N-1)d}{2}} 
\sum_{\beta}
f(y+ \lambda \beta + z) 
\widetilde{P}_{\beta,\gamma} (N, \lambda, z)
\\
&\quad 
-\lambda^{-\frac{N(N-1)d}{2}}
\sum_{\beta}
R_{N}(y, \lambda \beta + z) 
\widetilde{P}_{\beta,\gamma} (N, \lambda, z). 
\end{align*}
If $|z|<\lambda$, then 
the estimate 
\eqref{eqtildePab-estimate} implies 
\begin{align*}
|\partial^{\gamma}f(y)| 
\lesssim 
\lambda^{-|\gamma |}  
\sum_{\beta}
| f(y+ \lambda \beta + z) | 
+ \lambda^{-|\gamma| }
\sum_{\beta}
|R_{N}(y, \lambda \beta + z)|.  
\end{align*}
Let $0<\epsilon \le 1$. 
Taking average 
over $z$ in the range $|z|<\lambda$, 
we obtain 
\begin{equation*}
\begin{split}
|\partial^{\gamma} f (y)|
& \lesssim 
\lambda^{-|\gamma|} 
\sum_{\beta}
\bigg( \lambda^{-d}
\int_{|z|<\lambda} 
|f(y+\lambda \beta + z)|^{\epsilon}\, dz  
\bigg)^{1/\epsilon}
\\
&\quad 
 + 
\lambda^{-|\gamma|}
\sum_{\beta}
\bigg( \lambda^{-d} 
\int_{|z|<\lambda} 
\left| R_{N}(y, \lambda \beta + z)\right|^{\epsilon}\, 
dz \bigg)^{1/\epsilon}
\\
&
:= \mathrm{I} + \mathrm{II}. 
\end{split}
\end{equation*}
For the first term, we have 
\begin{equation*}
\mathrm{I}
\lesssim 
\lambda^{-|\gamma|} 
\bigg( \lambda^{-d}
\int_{|w|\lesssim \lambda} 
|f(y+w)|^{\epsilon}\, dw 
\bigg)^{1/\epsilon}
\lesssim 
\lambda^{-|\gamma|} 
M_{\epsilon}(f)(y). 
\end{equation*}
For the second term, 
we use H\"older's inequality (recall $\epsilon \le 1$) 
and make a change of variables 
$\theta (\lambda \beta + z)=w$ to obtain 
\begin{align*}
\mathrm{II} 
&
\lesssim 
\lambda^{-|\gamma|}
\sum_{\beta}
\lambda^{-d} 
\int_{|z|<\lambda} 
| R_{N}(y, \lambda \beta + z)|\, 
dz 
\\
&
\lesssim 
\lambda^{-|\gamma|}
\sum_{\beta}
\sum_{|\alpha|=N}
\lambda^{-d} 
\iint_{\substack{ 
|z|<\lambda
\\
0<\theta<1
}} 
\left| 
(\partial^{\alpha}f)(y+ \theta (\lambda \beta + z)) \right|
\lambda^{N}
\, d\theta dz 
\\
&
\le 
\lambda^{-|\gamma|}
\sum_{|\alpha|=N}
\lambda^{-d}
\iint_{\substack{ 
|w|\lesssim \theta \lambda
\\
0<\theta<1
}} 
\left| 
(\partial^{\alpha}f) 
(y+ w) \right|
\lambda^{N} 
\theta^{-d}
\, 
d\theta dw
\\
&
\lesssim 
\lambda^{N-|\gamma|}
\sum_{|\alpha|=N} 
M_{1}(|\partial^{\alpha}f|)(y). 
\end{align*}
Thus 
\[
|\partial^{\gamma}f(y)|
\lesssim 
\lambda^{-|\gamma|}
M_{\epsilon}(f)(y)
+
\lambda^{N-|\gamma|}
\sum_{|\alpha|=N}
M_{1}( |\partial^{\alpha}f|)(y).   
\] 
The implicit constant in this inequality does not 
depend on $\lambda > 0$. 
Hence, if 
$M_{\epsilon}(f)(y)<\infty$ and 
$\sum_{|\alpha|=N} M_{1}(|\partial^{\alpha}f|)(y)<\infty$,  
then 
taking infimum over $\lambda>0$ we 
obtain the inequality claimed in the lemma. 
\end{proof}

Now the following lemma includes 
the inequality \eqref{eqGagliardo-Nirenberg} as a special case.

\begin{lem}\label{th04}
Let $n, d, K\in \N$. 
Let $1<r \le \infty$, 
$0<\widetilde{q} < q< r $, 
and let 
$\theta$ be the number defined by 
$1/q = (1-\theta)/\widetilde{q} + \theta/r$. 
Then there exist a positive integer $N> K$ 
and a constant 
$c$ 
such that 
\begin{equation*}
\begin{split}
&\bigg\|
\sum_{
|\alpha|\le K}
\sup_{x \in \R^n} 
\left|
\partial_{x,y}^{\alpha}
f(x,y)
\right|
\bigg\|_{L^{q}_{y}(\R^d)}
\\
&
\le 
c
\left\|
\sup_{x\in \R^n} 
|f(x,y)|
\right\|_{L^{\widetilde{q}}_{y}(\R^d)}
^{1-\theta}
\bigg\|  
\sum_{|\alpha|\le N } 
\sup_{x \in \R^n} 
\left| 
\partial_{x,y}^{\alpha}
f(x,y) \right|
\bigg\|_{L^{r}_{y}(\R^d)}
^{\theta}
\end{split}
\end{equation*}
for all 
$f \in C^{\infty}(\R^n \times \R^d)$. 
The 
integer $N$ can be taken depending only on 
$K, q, \widetilde{q}$, and $r$;  
the constant $c$ depends 
only on 
$n, d, K, q, \widetilde{q}$, and $r$. 
\end{lem}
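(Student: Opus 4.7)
The plan is to reduce the proof to the pointwise maximal-function Gagliardo--Nirenberg estimate of Lemma \ref{thlogconvexMeM1}, applied to $f$ regarded as a function on $\R^{n+d}$, and then to convert the result into the required $L^q_y$-inequality by combining H\"older's inequality with the log-convexity of $L^p$-norms. The observation that unlocks the argument is that the natural interpolation exponent coming from Lemma \ref{thlogconvexMeM1} has the form $|\alpha|/N$ and depends on the derivative order, whereas the statement demands a single exponent $\theta$ fixed by the hypothesis; a one-sided log-convexity trick will let us push $|\alpha|/N$ up to $\theta$.

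First I would choose $\epsilon\in (0,\min(1,\widetilde q))$ so that $M^{(d)}_\epsilon$ is bounded on $L^{\widetilde q}(\R^d)$, and an integer $N$ satisfying $N\ge K/\theta$ (so in particular $N>K$, since $\theta<1$ because $q<r$). For each multi-index $\alpha$ with $1\le |\alpha|\le K$, applying Lemma \ref{thlogconvexMeM1} to $f\in C^\infty(\R^{n+d})$ with this $N$ and $\epsilon$ gives the pointwise bound
\[
|\partial^\alpha f(x,y)|\le c\,[M^{(n+d)}_\epsilon(f)(x,y)]^{1-|\alpha|/N}\Big[\sum_{|\tau|=N}M^{(n+d)}_1(|\partial^\tau f|)(x,y)\Big]^{|\alpha|/N}.
\]
A short Fubini/cube computation gives, for any nonnegative $g$,
$\sup_{x}M^{(n+d)}_p(g)(x,y)\le c_{n,d}\,M^{(d)}_p(\sup_{x'}|g(x',\cdot)|)(y)$
for $p\in\{1,\epsilon\}$. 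Writing $G_\tau(y):=\sup_x|\partial^\tau f(x,y)|$ and $\theta_\alpha:=|\alpha|/N$, taking $\sup_x$ in the displayed inequality yields
\[
\sup_x|\partial^\alpha f(x,y)|\le c\,[M^{(d)}_\epsilon(G_0)(y)]^{1-\theta_\alpha}\Big[\sum_{|\tau|=N}M^{(d)}_1(G_\tau)(y)\Big]^{\theta_\alpha}.
\]

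Second, define $q_\alpha$ by $1/q_\alpha=(1-\theta_\alpha)/\widetilde q+\theta_\alpha/r$; since $0\le\theta_\alpha\le K/N\le\theta<1$ we have $\widetilde q\le q_\alpha\le q\le r$. Applying H\"older's inequality in $y$ with exponents $\widetilde q/(1-\theta_\alpha)$ and $r/\theta_\alpha$, together with the $L^{\widetilde q}$-boundedness of $M^{(d)}_\epsilon$ (guaranteed by $\widetilde q>\epsilon$) and the $L^r$-boundedness of $M^{(d)}_1$ (guaranteed by $r>1$), yields
\[
\|\sup_x|\partial^\alpha f|\|_{L^{q_\alpha}_y}\le c\,\|G_0\|_{L^{\widetilde q}_y}^{1-\theta_\alpha}\,B^{\theta_\alpha},\qquad B:=\sum_{|\tau|\le N}\|G_\tau\|_{L^r_y};
\]
the case $r=\infty$ is handled by replacing $r/\theta_\alpha$ with $\infty$ in H\"older.

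Third, I would upgrade the $L^{q_\alpha}$-bound to an $L^q$-bound with the correct exponent $\theta$. Set $s:=(1-\theta)/(1-\theta_\alpha)\in(0,1]$; a short computation verifies that $1/q=s/q_\alpha+(1-s)/r$, so the log-convexity of $L^p$-norms gives
\[
\|\sup_x|\partial^\alpha f|\|_{L^q_y}\le \|\sup_x|\partial^\alpha f|\|_{L^{q_\alpha}_y}^{s}\,\|\sup_x|\partial^\alpha f|\|_{L^r_y}^{1-s}.
\]
Since $|\alpha|\le K\le N$, the second factor is at most $B^{1-s}$, and plugging in the H\"older estimate for the first factor, the exponents on $\|G_0\|_{L^{\widetilde q}}$ and on $B$ simplify (by direct algebra) to exactly $1-\theta$ and $\theta$, respectively. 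Summing over the finite set of multi-indices with $|\alpha|\le K$ (and treating $|\alpha|=0$ by direct log-convexity of $L^p$-norms applied to $G_0$) produces the inequality claimed in Lemma \ref{th04}.

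The main obstacle is precisely the mismatch between the $\alpha$-dependent exponent $\theta_\alpha=|\alpha|/N$ delivered by the pointwise maximal-function Gagliardo--Nirenberg estimate and the single, fixed interpolation exponent $\theta$ required by the statement. One cannot in general ``straddle'' $\theta$ from both sides using integer orders of derivatives (this fails for small $|\alpha|$, namely when $|\alpha|(1-\theta)<\theta$). The remedy is the one-sided log-convexity step above: by choosing $N$ large enough that $\theta_\alpha\le\theta$ for every $|\alpha|\le K$ and then log-convex--interpolating the $L^{q_\alpha}$-bound with the trivial $L^r$-bound $\|\sup_x|\partial^\alpha f|\|_{L^r}\le B$, the exponents collapse exactly to $(1-\theta,\theta)$. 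Once this structural point is identified, all remaining steps (the Fubini bound on $\sup_x M^{(n+d)}$, the H\"older step, and the algebra verifying the exponents) are routine.
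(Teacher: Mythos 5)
Your proof is correct and follows essentially the same route as the paper: both rely on Lemma \ref{thlogconvexMeM1} applied to $f$ viewed as a function on $\R^{n+d}$, choose $N$ large enough that $\theta_\alpha:=|\alpha|/N\le\theta$ for all $|\alpha|\le K$, pass $\sup_x$ inside the maximal functions (the Fubini/cross-section estimate you make explicit is used implicitly in the paper), invoke the $L^{\widetilde q}$-boundedness of $M_\epsilon$ (with $\epsilon<\widetilde q$) and the $L^r$-boundedness of $M_1$, and finish with H\"older against the hypothesis $1/q=(1-\theta)/\widetilde q+\theta/r$. The only (cosmetic) difference is where the slack $\theta-\theta_\alpha$ is absorbed: you do it at the norm level by interpolating the $L^{q_\alpha}$-estimate against the trivial $L^r$-bound via log-convexity, while the paper does it pointwise, bounding $[M_\epsilon(\sup_x|f|)]^{\theta-\theta_\alpha}\big[\sum_{|\alpha|=N}M_1(\sup_x|\partial^\alpha f|)\big]^{\theta_\alpha}\lesssim\big[\sum_{|\alpha|\le N}M_1(\sup_x|\partial^\alpha f|)\big]^{\theta}$ before taking the $L^q_y$-norm.
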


\begin{proof}
If we 
prove the inequality 
for 
$f\in C^{\infty}(\R^n \times \R^d)$ with compact support, 
then we can 
easily derive  
the inequality for general $f$ 
by a limiting argument. 
Thus we assume 
$f\in C^{\infty}(\R^n \times \R^d)$ has compact support. 
We shall prove the inequality 
for an $N\in \N$ satisfying 
$K/N\le \theta$.

Take an arbitrary 
$\alpha \in (\N_0)^{n+d}$ satisfying  
$|\alpha|\le K$. 
We write 
$\theta_{\ast}= |\alpha|/N$. 
Take an $\epsilon \in (0,1]$. 
By Lemma \ref{thlogconvexMeM1}, we 
have 
\begin{equation*}
\left| \partial_{x,y}^{\alpha} f (x, y) \right|
\lesssim  
\left[ M_{\epsilon} (f) (x, y) \right]^{1 - \theta_{\ast}}
\bigg[ 
\sum_{|\alpha| =N} 
M_{1} ( 
|\partial_{x,y}^{\alpha} f |  ) (x, y) 
\bigg]^{\theta_{\ast}}  
\end{equation*}
(notice that this inequality obviously holds if $\alpha=0$).   
Taking $\sup$ over $x\in \R^n$, we have 
\begin{equation}\label{eqpointwise}
\sup_{x} 
\left| \partial_{x, y}^{\alpha} f (x, y) \right|
\lesssim  
\left[ M_{\epsilon} 
\left( 
\sup_{x} |f(x, y)| 
\right) 
\right]^{1 - \theta_{\ast}}
\bigg[ 
\sum_{|\alpha|=N} 
M_{1} 
\bigg( 
\sup_{x} |\partial_{x, y}^{\alpha} f (x,y)|  \bigg)  
\bigg]^{\theta_{\ast}}. 
\end{equation}
Since 
$\theta_{\ast}\le \theta$ by 
our choice of 
$N$, 
we have 
\begin{align*}
&(\text{the right hand side of \eqref{eqpointwise}})
\\
&
=
\left[ M_{\epsilon} 
\left( 
\sup_{x} |f(x, y)| 
\right) 
\right]^{1 - \theta}
\left[ M_{\epsilon} 
\left( 
\sup_{x} |f(x, y)| 
\right) 
\right]^{\theta - \theta_{\ast}}
\bigg[ 
\sum_{|\alpha|=N} 
M_{1} \bigg( 
\sup_{x}
|\partial_{x,y}^{\alpha} f(x,y) |  \bigg) 
\bigg]^{\theta_{\ast}}
\\
&
\le
\left[ M_{\epsilon} 
\left( 
\sup_{x} |f(x, y)| 
\right) 
\right]^{1 - \theta}
\bigg[ 
\sum_{|\alpha|\le N} 
M_{1} \bigg( 
\sup_{x} 
|\partial_{x,y}^{\alpha} f(x,y)|  \bigg) 
\bigg]^{\theta}. 
\end{align*}
The maximal  operator $M_{1}$ is bounded in 
$L^{r}$ since $r>1$. 
We choose 
$\epsilon$ so that 
$\epsilon < \widetilde{q}$; 
then 
$M_{\epsilon}$ is bounded in $L^{\widetilde{q}}$. 
Hence 
the above 
inequalities combined with 
H\"older's inequality and the boundedness of 
$M_{\epsilon}$ and $M_{1}$ 
imply  
\begin{align*}
&\bigg\| \sup_{x} 
| \partial_{x,y}^{\alpha} f (x, y) | 
\bigg\|_{L^q_y}
\\
&
\lesssim  
\bigg\| 
M_{\epsilon} 
\left( 
\sup_{x} |f(x, y)| 
\right) 
\bigg\|_{L^{\widetilde{q}}_y}
^{1 - \theta}
\bigg\|  
\sum_{|\alpha|\le N} 
M_{1} \bigg( 
\sup_{x} 
|\partial_{x,y}^{\alpha} f(x,y)|  \bigg) 
\bigg\|_{L^{r}_y}
^{\theta}
\\
&
\lesssim 
\bigg\| 
\sup_{x} |f(x, y)| 
\bigg\|_{L^{\widetilde{q}}_y}^{1 - \theta}
\bigg\|  
\sum_{|\alpha|\le N} 
\sup_{x} 
|\partial_{x,y}^{\alpha} f(x,y)|  
\bigg\|_{L^{r}_y}
^{\theta}, 
\end{align*}
which is the desired inequality. 
\end{proof}



\end{document}